\newcommand{\RR}{\mathbb{R}}
\newcommand{\CC}{\mathbb{C}}
\newcommand{\epsi}{\varepsilon}
\newcommand{\mell}[2][f]{\mathcal{M}(f)} 
\newcommand{\diff}{\mathop{}\mathopen{}\mathrm{d}}
\newtheorem{prop}{Proposition}
\newtheorem{prop/not}{Proposition/notation}
\newtheorem{thm}{Théorème}
\newtheorem{cor}{Corollaire}
\newtheorem{lem}{Lemme}
\theoremstyle{definition}
\newtheorem{rem}{Remarque}
\newtheorem{thmintro}{Théorème}
\newcommand{\ioe}{\leqslant}
\newcommand{\soe}{\geqslant}
\title{ Conversions explicites entre des fonctions sommatoires de la fonction de M\"obius}
\author{Daval Florian}
\date{}
\begin{document}
\maketitle
\begin{abstract} En utilisant exclusivement les outils de l'analyse réelle, nous fournissons des estimations explicites de plusieurs fonctions sommatoires classiques faisant intervenir la fonction de M\"obius, 
et en donnons quelques applications. 

Nous partons des meilleures estimations explicites disponibles de la fonction sommatoire $M(x)=\sum_{n \ioe x} \mu(n)$ du type $|M(x) / x| \ioe A_j /(\log x)^j$ pour tout $x \soe T_j$ avec $j=0, 1$ et $2$ pour obtenir des encadrements de $m(x)=\sum_{n \ioe x} \mu(n)/n$ du même type, soit  $|m(x)|  \ioe A'_j /(\log x)^j$ pour tout $x \soe T'_j$ avec $A'_j=A_j(1+\epsi_j)$ et $0<\epsi_j<6 \times 10^{-4}$ qui améliorent les résultats antérieurs.

Ensuite, nous étudions la fonction $\check{m}(x)=\sum_{n \ioe x} (\mu(n)/n) \log(x/n)$ et prouvons notamment  que  $|\check{m}(x)-1| \ioe A''_j/(\log x)^j $ avec $A''_j< 1.2 \epsi_j$. En particulier on obtient la valeur exacte de $\sup_{x \soe 1} (\log^2 x) |\check{m}(x)-1|$.

Enfin, à partir de l'estimation $\varlimsup  |M(x)/\sqrt{x}|>1.837625 $ obtenue par  Hurst, nous démontrons que 
$\varlimsup  |m(x) \sqrt{x}| > \sqrt{2}$, ce qui montre en particulier  que le supremum de  $|m(x) \sqrt{x}|$ n'est pas atteint pour $x \to 2^{-}$ comme pourraient le laisser croire les premières observations numériques.
\end{abstract}

\section{Introduction}
Par divers aspects, la fonction de  M\"obius $\mu(n)$, définie formellement par $\sum \mu(n) n^{-z}=(\sum  n^{-z})^{-1}$, est liée aux nombres premiers. Nous allons nous appuyer sur des estimations déjà obtenues par d'autres auteurs  pour la fonction sommatoire $\sum_{n \ioe x} \mu(n)$ et  voici quatre conséquences de l'approche que nous développons dans cet article.
\begin{thmintro} \label{thm A}
\begin{equation*}
|\sum_{n \ioe x} \mu(n)/n | \ioe
\begin{cases}
1/4343  \quad &(x \soe 2\,160\,605)\,,\\
0.0130073/ \log x \quad &(x\soe 97\,063)\,,\\
362.84/(\log x)^2 \quad &(x\soe 2 )\,. 
\end{cases} 
\end{equation*}
\end{thmintro}
\begin{thmintro} \label{thm B}
\begin{equation*}
|\sum_{n \ioe x} (\mu(n)/n) \log(x/n)-1 | \ioe
\begin{cases}
1/9\,780\,919 \quad &( x \soe  2.5 \times 10^{12})\, , \\
(8.55 \times 10^{-6})/\log x \quad &( x \soe 2.5 \times 10^{11})\, , \\
0.162/ (\log x)^2 \quad &( x \soe 3) \,.
\end{cases} 
\end{equation*}
\end{thmintro}
\begin{thmintro} \label{thm C}
Posons $M(t)=\sum_{n \ioe t} \mu(n)$ et $m(t)=\sum_{n \ioe t} \mu(n)/n$, on a :
\begin{equation*}
\frac{2}{3} \ioe  \frac{\sup_{t \ioe x}|m(t)|t}{\sup_{t \ioe x }|M(t)|} \ioe \frac{3}{2} \quad ( x \soe 94) \;. 
\end{equation*}
\end{thmintro}
\begin{thmintro} \label{thm D}
\begin{equation*}
\varlimsup_{x\to \infty}  |\sum_{n \ioe x} \mu(n)/n|\sqrt{x} > \sqrt{2} \;.
\end{equation*}
\end{thmintro}

La présentation de nos résultats étant faite, venons-en à la partie historique puis aux méthodes. La fonction de Tchebichef $\psi$ fut l'une des premières fonctions arithmétiques encadrée efficacement.  Au début du 20\ieme\ siècle, on savait,  grâce aux travaux menés par Landau (depuis sa thèse en 1899) et par Axer, que les trois résultats suivants étaient équivalents :  
\begin{equation}
M(x)=\sum_{n \ioe x} \mu(n)=o(x), \quad 
xm(x)=x \sum_{n \ioe x} \frac{\mu(n)}{n}=o(x), \quad 
\psi(x)-x=o(x) \;. \label{Landau}
\end{equation}
Mais le sens précis du mot équivalent est à travailler dans chaque contexte, pour Landau cela signifiait sans doute ne pas utiliser d'analyse complexe pour passer d'un résultat à l'autre. Il y a pourtant des différences entre ces fonctions. En particulier,  les techniques développées pour évaluer explicitement chacune d'entre elles diffèrent largement.  

Ainsi on peut traiter $\psi(x)-x$ par les méthodes de l'analyse complexe en utilisant la formule explicite de von Mangoldt, une région sans zéro explicite et un calcul numérique des premiers zéros de $\zeta$. En revanche cette méthode n'a pas encore été appliquée avec le même succès pour traiter $M$ ou $m$ explicitement, elle aboutit pour l'instant à des majorations faisant intervenir des constantes bien trop grandes pour être utile en pratique. Pour avoir une idée de la taille des constantes obtenues par les formules de Perron pour $m$ voir l'article \cite{trudgian2015explicit}.   

 Une autre voie pour évaluer $M$ consiste  à utiliser des formes explicites de l'implication $\psi(x)-x =o(x) \Rightarrow M(x)=o(x)$. Schoenfeld dans l'article \cite{SchoenfeldMobius}  est le premier à obtenir des majorations explicites de $|M(x)/x|$ qui tendent vers $0$. Son travail est poursuivi par El Marraki \cite{MarrakiMobiusIII} puis par  Ramaré \cite{from}. Dans ces résultats qui partent d'identités de convolutions,  il s'agit tout d'abord d'utiliser une version explicite de $M(x)=O(x)$ la plus fine possible, pour El Marraki et Ramaré c'est l'inégalité $|M(x)| \ioe x/4345$ avec $x \soe  2\,160\,535$. La qualité de cette estimation pourtant moins forte que le théorème des nombres premiers se répercute sur les deux majorations asymptotiquement plus petites présentées ci-après, en outre sur l'intervalle $[10^{16}, \, 3\times 10^{24}]$ c'est la plus petite majoration de $|M(x)|$ dont on dispose. Voici certains des meilleurs résultats explicites obtenus à ce jour pour la fonction $|M(x)|$ et c'est à partir d'eux que nous avons obtenus les inégalités semblables sur $|m(x)|$ du théorème \ref{thm A}  présentées au début de l'introduction.
\begin{align}
\text{(Cohen-Dress-El Marraki, 1996)} \quad  : \quad   |M(x)| & \ioe \frac{x}{4345}  \quad (x \soe  2\,160\,535), \label{CDM}\\
\text{(Ramaré, 2013)} \quad  : \quad    |M(x)| & \ioe \frac{0.013 x}{\log x} \quad(x \soe 97\,067) \,,\label{donne pour M} \\
\text{(El Marraki, 1995)} \quad  : \quad    |M(x)| & \ioe  \frac{362.7 x}{(\log x)^2} \quad(x \soe 2)\,.  
\end{align}
La version explicite de $M(x)=O(x)$ est le fruit d'améliorations successives des idées élémentaires -- dans le sens où il n'est pas fait recours à l'analyse complexe -- développées par Tchebichef  pour obtenir les premiers encadrements de $\psi(x)$ en 1850. Von Sterneck arrive à prouver ainsi dans une publication de 1898 que $|M(x)| \ioe x/9+8$ pour tout $x\soe 1$.  Ce résultat a été amélioré  par MacLeod dans~\cite{Macmobius}, Costa Pereira dans~\cite{CostaPPsiM}, Dress, El Marraki dans
\cite{DressMobiusMarrakiII} et les trois co-auteurs Cohen, Dress, El Marraki dans~\cite{MobiusCohenDressMarraki}. Ces travaux reposent tous sur l'emploi d'une fonction de la forme $H(t)=1-\sum_{r}c_r \lfloor t/r \rfloor$ vérifiant $\sum_r c_r/r=0$ et qui approche bien la fonction définie sur $[1,\, +\infty[$ qui vaut constamment $1$. Nous allons voir par la suite que de telles fonctions jouent un rôle primordial dans notre travail.

Obtenir des encadrements explicites de la fonction sommatoire $m(x)$, ainsi que de la version lissée
$\check{m}(x)=\sum_{n \ioe x} (\mu(n)/n) \log(x/n)$, constitue un enjeu important de la 
théorie explicite des nombres premiers, comme en témoigne leur usage dans le travail  
d'Helfgott \cite{helfgott2012minor}  sur le problème de Goldbach ternaire. Voir également la récente
prépublication de Zuniga Alterman \cite{alterman2020logarithmic}.
Pour ce faire, on peut chercher à exploiter des identités plus ou moins sophistiquées reliant 
$m(x)$ à des intégrales pondérées de $M$. C'est la voie suivie par  El Marraki
dans \cite{preprint},  Ramaré \cite{RamarExplicitMob} en s'appuyant sur un travail 
de Balazard \cite{BalazarMobiusEnglish}.  Balazard introduit la fonction intermédiaire
\begin{equation}
m_1(x)=\sum_{n \ioe x}\mu(n) \left(\frac{1}{n}- \frac{1}{x}  \right)=m(x)-\frac{M(x)}{x} 
\end{equation} 
et par exemple il obtient l'identité
\begin{equation} 
m_1(x)= \frac{1}{x} \int_{1}^{x} M(x/t) \epsi_1'(t) \diff{t}+\frac{8}{3x}- \frac{4}{x^2} \Big(1-\frac{1}{3x^2} \Big) \quad (x\soe 1)\,,\label{bal2}
\end{equation} 
avec  la fonction dérivable $\epsi_1$ qui vérifie
\begin{equation}
\epsi_1(t) =\frac 13 - \frac 1{3t}
+ \frac 43 \frac{\{t\}^3 - \frac 32 \{t\}^2+\frac12 \{t\}}{t^2} -\frac 13 \frac{\{t\}^4-2\{t\}^3+\{t\}^2}{t^3} \quad \text{et}  \quad  0 \ioe \epsi_1'(t) \ioe \frac{1}{t^2}\;. \label{avant deriv}
\end{equation}
Dans cet article et la thèse \cite{daval2019identites} nous approfondissons ces idées en développant un cadre général en vue  d'établir des identités du type de \eqref{bal2}. La méthode de fabrication de ces identités différe de l'article \cite{BalazardHal}, il y a plus de souplesse et en particulier la fonction intégrée contre $M(x/t)$ n'est plus nécessairement une fonction  dérivée.
\begin{thm}\label{thm1}
1) Soit $g:[0,1]\to \CC$ une fonction intégrable vérifiant $\int_0^1 g(y)\diff{y}=1$. Pour tout $x \soe 1$ on a l'égalité 
\begin{equation*}
m_1(x)=m(x)-\frac{M(x)}{x} =  \int_1^x \frac{M(x/t)}{x/t} G(t)  \frac{\diff{t}}{t} + \frac{1}{x}\int_{1/x}^1 \frac{g(y)}{y}\diff{y}  \quad \text{où} \quad  G(t)=1 -\frac 1t\sum_{n\leqslant t} g\Big(\frac{n}{t}\Big)\,. \label{m1 par M}
\end{equation*}
2) Soit $h:[0,1]\to \CC$ une fonction intégrable vérifiant $\int_0^1 h(y)\diff{y}=0$. Pour tout $x \soe 1$ on a l'égalité 
\begin{equation*}
m_1(x)=m(x)-\frac{M(x)}{x} =\int_{1}^{x}m(x/t) H(t) \frac{\diff{t}}{t^2} -\int_0^{1/x} h(y) \diff{y} \quad \text{où} \quad  H(t)=1-\sum_{n \ioe t}h\Big(\frac{n}{t}\Big)\,. \label{m1 par m}
\end{equation*}  
\end{thm}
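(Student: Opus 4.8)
The strategy is to establish both identities by the same elementary mechanism: rewrite a Dirichlet-type sum as an integral against a counting function, then isolate the "main term" coming from the hypothesis on the mean of $g$ (resp. $h$). For part 1), I would start from the definition $m_1(x) = \sum_{n \ioe x} \mu(n)(1/n - 1/x)$ and, using $\int_0^1 g(y)\,\diff{y} = 1$, write $1/n - 1/x$ as an integral of $g$ over an appropriate interval. The natural move is: for each $n \ioe x$, note that $1/n - 1/x = \frac{1}{x}\int_{n/x}^{1} \frac{1}{y^2}\cdot y\, g(\cdot)$-type manipulation — more precisely, I expect one writes $\frac{1}{n} = \frac{1}{x}\int_0^1 g(y)\,\diff y \cdot \frac{x}{n}$ is not quite it; rather the cleanest path is to substitute and swap the order of summation and integration so that $\sum_{n \ioe x}\mu(n)$ becomes $M(x/t)$ evaluated at the point $t$ where $n/t$ crosses into the support of $g$. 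Concretely: $\sum_{n \ioe x} \frac{\mu(n)}{n} g(n/t)$ summed appropriately reconstructs $\frac{1}{t}\sum_{n \ioe t} g(n/t)$, and the term $1 - \frac1t\sum_{n \ioe t} g(n/t) = G(t)$ appears by adding and subtracting $1$. I would then integrate $\frac{M(x/t)}{x/t} G(t)\frac{\diff t}{t}$ against $t$ from $1$ to $x$ and check that the boundary/residual term collapses to $\frac{1}{x}\int_{1/x}^1 \frac{g(y)}{y}\,\diff y$, using $\int_0^1 g = 1$ to absorb the part of the integral over $[0,1/x]$.

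**Part 2) and the Möbius/Dirichlet bookkeeping.**

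For part 2), the parallel computation uses $\int_0^1 h(y)\,\diff y = 0$, which is exactly what makes the analogous main term vanish (leaving only the $-\int_0^{1/x} h(y)\,\diff y$ correction). Here I would start from $m_1(x) = m(x) - M(x)/x$ directly and express $H(t) = 1 - \sum_{n \ioe t} h(n/t)$; the key identity to verify is that
\begin{equation*}
\int_1^x m(x/t) H(t) \frac{\diff t}{t^2} = \int_1^x m(x/t)\frac{\diff t}{t^2} - \int_1^x m(x/t)\Big(\sum_{n \ioe t} h(n/t)\Big)\frac{\diff t}{t^2}\,,
\end{equation*}
where the first piece on the right, after the change of variable $u = x/t$, recovers $m_1(x)$ up to the correction term, and the second piece telescopes against the definition of $m$ because $\sum_{n\ioe t} h(n/t)$ is a step function in $t$ with jumps exactly at integer multiples, each jump carrying a factor that reassembles $\sum \mu(k)/k$. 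The hypothesis $\int_0^1 h = 0$ is used to control the contribution near $t = x$ (equivalently $u$ near $1$), collapsing it to $-\int_0^{1/x} h(y)\,\diff y$.

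**Where the real work lies.**

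The routine parts are the changes of variable and the interchange of sum and integral (justified by integrability of $g$, $h$ and finiteness of all sums, since $n \ioe x$); the delicate point — the step I expect to be the main obstacle — is the precise accounting of the boundary terms. One must track carefully how the partial sum $\sum_{n \ioe t} g(n/t)$ (a sum whose number of terms grows with $t$ while each argument $n/t$ ranges over a lattice in $(0,1]$) interacts with the integration variable, and in particular handle the endpoint $t = 1$ (where the sum is just $g(1/1) = g(1)$, so one needs $g$ defined/integrable up to $1$ and the convention at the single point) and the endpoint $t = x$ (where $n$ ranges over all integers up to $x$ and the lattice $\{n/x\}$ approximates $[0,1]$, so $\frac1x\sum_{n\ioe x} g(n/x) \to \int_0^1 g = 1$ only in an averaged sense — here one does not take a limit but uses the exact identity). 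Getting the constant in the residual term exactly right (the coefficient $1$ in $\frac{1}{x}\int_{1/x}^1$, and the sign and range $[0,1/x]$ in part 2) requires matching these contributions term by term rather than asymptotically; I would verify it by first checking the identities on the special case $g \equiv 1$ (resp. $h(y) = $ a mean-zero step function, e.g. recovering Balazard's $\epsi_1'$), where everything is explicit, and then arguing the general case follows by linearity and the density of such test functions in $L^1[0,1]$.
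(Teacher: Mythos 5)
Your overall plan does point in the same direction as the paper's proof (split $G(t)=1-\frac 1t\sum_{n\leqslant t}g(n/t)$, interchange sum and integral, compute each piece), but the central step is missing. The identity that makes everything work is Möbius inversion \emph{under the integral sign}: after the interchange
\begin{equation*}
\int_1^x M(x/t)\,\delta(t)\,\frac{\diff{t}}{t}=\int_1^x \sum_{n\leqslant u}\mu(n)\,\delta(u/n)\,\frac{\diff{u}}{u}\,,
\end{equation*}
one applies this to $\delta_1(t)=\sum_{k\leqslant t}\delta(t/k)$ and uses $\sum_{n\leqslant u}\mu(n)\sum_{k\leqslant u/n}\delta\big(u/(nk)\big)=\delta(u)$ to get $\int_1^x M(x/t)\,\frac1t\sum_{k\leqslant t}\delta(t/k)\,\diff{t}=\int_1^x\delta(u)\,\frac{\diff{u}}{u}$; choosing $\delta(u)=g(1/u)$ and combining with $\int_1^x M(x/t)\,\diff{t}=xm_1(x)$ gives part 1), and the analogue with weight $t^{-2}$ and $\delta(u)=h(1/u)$ gives part 2). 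Your descriptions of this step (the sum ``reconstructs'' $\frac1t\sum_{n\leqslant t}g(n/t)$; the second piece ``telescopes\dots each jump carrying a factor that reassembles $\sum\mu(k)/k$'') never pin down this double-sum inversion, and as written they do not constitute an argument — in particular $\sum_{n\leqslant t}h(n/t)$ is not a step function in $t$ for general integrable $h$, so the ``jumps at integer multiples'' picture is simply wrong outside the special case where $h$ is a combination of indicators.

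Two further problems. First, your fallback strategy (verify for special $g$, then extend ``by linearity and the density of such test functions in $L^1[0,1]$'') is unsound: $G(t)$ involves the point evaluations $g(n/t)$, which are neither well defined nor continuous as functionals on $L^1[0,1]$, so passing to the limit requires first proving that both sides of the identity depend $L^1$-continuously on $g$ — and that is exactly the Fubini/interchange computation you are trying to avoid, not a shortcut around it. Second, a smaller point of bookkeeping: in part 1) the hypothesis $\int_0^1 g=1$ is not what produces the residual term $\frac1x\int_{1/x}^1 g(y)y^{-1}\diff{y}$; that term comes directly from $\int_1^x g(1/u)\,\frac{\diff{u}}{u}$ after the substitution $y=1/u$, and the identity of part 1) is valid for any integrable $g$ (the normalisation only serves to make $G$ small). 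The mean-value hypothesis is genuinely used only in part 2), to convert $\int_{1/x}^1 h(y)\diff{y}$ into $-\int_0^{1/x}h(y)\diff{y}$.
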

Pour exploiter au mieux du point de vue numérique ces résultats généraux et ainsi obtenir le théorème  \ref{thm A}, nous réutilisons avec profit la fonction $H$ exhibée par Cohen, Dress et El Marraki pour obtenir l'inégalité \eqref{CDM}.   
Puis en utilisant d'autres identités intégrales semblables à celles du théorème \ref{thm1} nous  exprimons  $\check{m}(x)$ à l'aide de $m_1(x)$ et nous obtenons le théorème  \ref{thm B}.

Par ailleurs, nos méthodes permettent également d'obtenir des encadrements généraux reportés dans le théorème \ref{thm C}, ces résultats montrent une certaine proximité entre $m$ et $M$  que la formule sommatoire d'Abel ne pourrait pas donner même qualitativement et qui ne semblent pas se trouver dans la littérature.   Depuis l'article de Odlyzko et te Riele  \cite{odlyzko1984disproof} on sait que la conjecture de Mertens qui énonçait que $|M(x)| \ioe \sqrt{x}$ pour tout $x\soe 1$ est fausse. La réfutation est en fait plus précise car elle montre  que l'inégalité est fausse une infinité de fois, le dernier record de ce type  est le résultat $\varlimsup_{x\to \infty}  |M(x)|x^{-1/2}>1.837625$ établi par Hurst \cite{hurst2018computations}. En 1897, lorsqu'il a publié sa conjecture, Mertens s'était notamment appuyé sur une vérification pour tous les $x$ inférieurs à $10\,000$. En faisant la même chose pour la fonction $m$ on serait conduit à conjecturer que $x|m(x)|  \ioe \sqrt{2x}$ pour tout $x\soe 1$.   En raffinant le théorème \ref{thm C} on aboutit à l'énoncé du théorème \ref{thm D} sans avoir à adapter la méthode de Odlyzko et te Riele à la fonction $m$.

\section{\texorpdfstring{De $M$ vers $m$}{De M vers m}}

\subsection{Outils pour convertir des inégalités explicites}

Le lemme suivant est tiré de l'identité de Balazard \eqref{bal2} mais les calculs d'intégrales \eqref{Muntz G1} sont nouveaux. On pourra convertir des résultats  $|M(x)|  \ioe A_j x^{\theta}/(\log x)^j$ vers le même type de majorations concernant $x|m_1(x)|$ puis $x|m(x)|$ et le théorème des nombres premiers correspond à $\theta=1$. 
\begin{lem} \label{machinerie epsi}
Il existe une fonction $G_1 : \RR_+ \rightarrow \RR_+$ telle que  pour tous $T,x,j,\theta  \in\RR$ tels que $1< T \ioe x$, $j \soe 0$ et $\theta>-1$, on a :
\begin{equation}
|m_1(x)| \ioe \sup_{T < u < x} \big( u^{-\theta} |M(u)| \log^j u \big) \times \int_{1}^{\infty} G_1(t) t^{-\theta +\frac{j}{\log T}}\diff{t} \times \frac{x^{\theta-1}}{\log^j x}  +  \frac{R_T(x)}{x}\label{boite G1}
\end{equation}
avec $R_T(x)= 8/3+  (1/x) \int_{1}^{T} |M(t)|   \diff{t}$. \newline

 Pour tout $s \in\CC$ tel que $\Re s>-1$, $s\neq 1$, on a :
\begin{equation}
\int_{1}^{\infty} G_1(t) t^{-s}\diff{t}
= \frac{1}{s-1}- \frac{8\zeta(s)}{(s+1)(s+3)}\;.  \quad \text{Et aussi }  \int_{1}^{\infty} G_1(t) t^{-1}\diff{t}=\frac{3}{4}-\gamma\;.  \label{Muntz G1}
\end{equation}
\end{lem}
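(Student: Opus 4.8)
The starting point is Balazard's identity \eqref{bal2}, which I would rewrite in multiplicative form. Setting $t \mapsto x/t$ in the integral $\int_1^x M(x/t)\epsi_1'(t)\diff t$ and collecting the remaining terms, the plan is to define
\[
G_1(t) = \epsi_1'(t)\cdot t \qquad (t\ge 1),
\]
so that $\frac1x\int_1^x M(x/t)\epsi_1'(t)\diff t = \int_1^x \frac{M(x/t)}{x/t}\,G_1(t)\,\frac{\diff t}{t}\cdot\frac1{?}$ after the appropriate bookkeeping; the bound $0\le\epsi_1'(t)\le t^{-2}$ from \eqref{avant deriv} then gives $0 \le G_1(t) \le t^{-1}$, in particular $G_1 \ge 0$, which is what the statement asserts. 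The ``reste'' $R_T(x)$ is assembled from the elementary terms $\tfrac{8}{3x} - \tfrac4{x^2}(1-\tfrac1{3x^2})$ of \eqref{bal2} together with the part of the integral over $t\in[1,T]$ (equivalently $x/t \in [x/T, x]$), which is crudely bounded by $\frac1x\int_1^T|M(t)|\diff t$; one checks the elementary terms are absorbed into the constant $8/3$.

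For the main term I would split the integral at $t = T$ and estimate the tail $\int_T^x$: on that range write $M(x/t) = (x/t)^{\theta}(\log(x/t))^{-j}\cdot\big[(x/t)^{-\theta}|M(x/t)|(\log(x/t))^j\big]$ and bound the bracket by $\sup_{T<u<x}\big(u^{-\theta}|M(u)|\log^j u\big)$. Pulling this supremum out, the remaining factor is $\int_T^x (x/t)^{\theta}(\log(x/t))^{-j} G_1(t)\frac{\diff t}{t}$. Here comes the one genuinely delicate point: one must turn $(\log(x/t))^{-j}$ into something depending only on $\log x$ and $\log T$. The trick is that for $1<t<x$ one has $\log(x/t) = \log x - \log t \ge (\log x)(1 - \tfrac{\log t}{\log T})$... this is false in general, so instead I would use $\log(x/t)\ge \log x\cdot\frac{\log(x/t)}{\log x}$ trivially and combine with the substitution $t^{j/\log T} \ge t^{j/\log t}\cdot(\text{correction})$; concretely, $(\log(x/t))^{-j} \le (\log x)^{-j}\, t^{\,j/\log T}$ holds because $\log(x/t) \ge (\log x)\, t^{-1/\log T}$ is equivalent, after taking logs, to $\log\log(x/t) \ge \log\log x - \tfrac{\log t}{\log T}$, and this follows from concavity of $\log\log$ on $[T,x]$ together with $\log\log x - \log\log(x/t) \le \frac{\log t}{\log(x/t)}\le\frac{\log t}{\log T}$ since $x/t\ge x/T\ge$... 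I would verify the chain $x/t \ge T$ fails too, so the cleanest route is: for $T<t<x$, $\frac{\diff}{\diff t}\log\log(x/t) = \frac{-1}{(x/t)\log(x/t)}\cdot\frac{-x}{t^2}\cdot(x/t) = \frac{-1}{t\log(x/t)}$, wait — I would instead just prove the pointwise inequality $(\log(x/t))^{-j}\le (\log x)^{-j} t^{j/\log T}$ directly by checking it at $t=T$ and monitoring monotonicity of the ratio in $t$. Once this is in hand, $(x/t)^{\theta} = x^{\theta} t^{-\theta}$ factors out $x^{\theta}/(\log x)^j$, and the $t$-integral becomes $\int_T^x G_1(t)\, t^{-\theta + j/\log T}\,\frac{\diff t}{t}\cdot$... no: $(x/t)^\theta \cdot G_1(t)/t \cdot t^{j/\log T} = x^\theta\, t^{-\theta-1+j/\log T} G_1(t)$, so extending the range to $[1,\infty)$ (legitimate since $G_1\ge 0$) and recalling $G_1(t)/t = \epsi_1'(t)$, one gets exactly $\int_1^\infty G_1(t)\,t^{-\theta+j/\log T}\frac{\diff t}{t}$... up to matching the exponent convention in \eqref{boite G1}; I would reconcile the $\frac{\diff t}{t}$ versus $\diff t$ normalization at the end.

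For the Mellin-type evaluations \eqref{Muntz G1}, write $\int_1^\infty G_1(t) t^{-s}\diff t = \int_1^\infty \epsi_1'(t)\, t^{1-s}\diff t$ and integrate by parts: $\big[\epsi_1(t)t^{1-s}\big]_1^\infty + (s-1)\int_1^\infty \epsi_1(t) t^{-s}\diff t$. From \eqref{avant deriv}, $\epsi_1(1)=0$ and $\epsi_1(t)\to\tfrac13$, so for $\Re s>1$ the boundary term vanishes and one is left with $(s-1)\int_1^\infty\epsi_1(t)t^{-s}\diff t$. Now expand $\epsi_1(t)$ using \eqref{avant deriv}: the term $\tfrac13 - \tfrac1{3t}$ integrates to elementary rational functions of $s$, while the two terms involving $\{t\}^k/t^{\ell}$ are handled by the standard Hurwitz/periodic-Bernoulli integral formulas — the cubic and quartic polynomials in $\{t\}$ appearing are (up to scaling) periodic Bernoulli polynomials $B_3(\{t\})$, $B_4(\{t\})$, whose Mellin transforms against $t^{-s}$ produce $\zeta(s)$ with explicit rational coefficients. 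Summing, one obtains a meromorphic expression which simplifies to $\frac1{s-1} - \frac{8\zeta(s)}{(s+1)(s+3)}$; the identity then extends to $\Re s>-1$, $s\ne 1$ by analytic continuation. Finally $\int_1^\infty G_1(t)t^{-1}\diff t = \tfrac34-\gamma$ follows by letting $s\to 1$: the pole of $\frac1{s-1}$ is cancelled by the pole of $\zeta(s)$ inside $-\frac{8\zeta(s)}{(s+1)(s+3)}$, and a Laurent expansion $\zeta(s) = \frac1{s-1}+\gamma+O(s-1)$ around $s=1$, together with evaluating $\frac{8}{(s+1)(s+3)}$ at $s=1$, yields the constant $\tfrac34-\gamma$ after a short computation. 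The main obstacle is the pointwise inequality converting $(\log(x/t))^{-j}$ into $(\log x)^{-j}t^{j/\log T}$ on $[T,x]$, which is the linchpin that lets the supremum be extracted cleanly; everything else is bookkeeping and standard Mellin transforms of Bernoulli polynomials.
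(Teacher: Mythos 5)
Your overall strategy is the paper's: start from Balazard's identity \eqref{bal2} put in the form of the first identity of the théorème \ref{thm1}, split the integral, extract a supremum, convert $(\log(x/t))^{-j}$ into $(\log x)^{-j}t^{j/\log T}$ by a pointwise inequality, and evaluate the Mellin transform via $\zeta$. Two concrete slips would however derail the argument as written. First, the normalization of $G_1$: since
\begin{equation*}
\frac{1}{x}\int_1^x M(x/t)\,\epsi_1'(t)\,\diff t=\int_1^x \frac{M(x/t)}{x/t}\,\epsi_1'(t)\,\frac{\diff t}{t}\,,
\end{equation*}
the correct choice is $G_1=\epsi_1'$, not $G_1(t)=\epsi_1'(t)\,t$. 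With your definition one only gets $0\ioe G_1(t)\ioe t^{-1}$, so $\int_1^{\infty}G_1(t)t^{-s}\diff t$ converges only for $\Re s>0$ instead of $\Re s>-1$, and your integration by parts computes $\int_1^{\infty}\epsi_1'(t)t^{1-s}\diff t=\tfrac{1}{s-2}-\tfrac{8\zeta(s-1)}{s(s+2)}$, a shifted version of \eqref{Muntz G1} rather than the stated formula; the bound $G_1\ioe t^{-2}$ is also exactly what makes the remainder term small. Second, the split is inverted: the supremum in \eqref{boite G1} runs over $u=x/t\in(T,x)$, hence over $t\in(1,x/T)$, so the main term is the integral over $t\in[1,x/T]$ and the remainder is the integral over $t\in[x/T,x]$ (where $u=x/t\in[1,T]$), which after the change of variables and $G_1\ioe t^{-2}$ yields $x^{-2}\int_1^T|M(u)|\diff u$. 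Your assignment (main term over $t\in[T,x]$, remainder over $t\in[1,T]$) makes the supremum range over $u\in[1,x/T]$, which is not contained in $(T,x)$, and the piece $t\in[1,T]$ transforms into $x^{-2}\int_{x/T}^{x}|M(u)|\diff u$, which is not controlled by $(1/x)\int_1^T|M|$.

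Concerning what you call the linchpin: the pointwise inequality $(\log(x/t))^{-j}\ioe(\log x)^{-j}\,t^{j/\log T}$ for $1\ioe t\ioe x/T$ is true and is precisely the content of the lemme \ref{log puissance k}. The clean proof is to set $r=j/\log T$ and observe that $u\mapsto u^{r}/\log^{j}u$ is increasing on $[T,+\infty[$ (its derivative has the sign of $r\log u-j$), whence $(x/t)^{r}/\log^{j}(x/t)\ioe x^{r}/\log^{j}x$ for $x/t\soe T$; note that equality occurs at $t=1$, not at $t=T$ as you propose to check. Finally, your Mellin evaluation by integration by parts against the explicit expression \eqref{avant deriv} of $\epsi_1$ and the Mellin transforms of periodic Bernoulli polynomials is a legitimate alternative to the paper's route — Müntz's formula applied to $g_1(y)=4y(1-y^2)$, which gives $\int_0^{\infty}G_1(t)t^{-s}\diff t=-8\zeta(s)/((s+1)(s+3))$ in one stroke — but it is considerably more laborious and, with the mis-normalized $G_1$, would land on the wrong identity; the limit computation at $s=1$ via the Laurent expansion of $\zeta$ is correct and matches the paper's.
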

 On retrouve une inégalité de l'article \cite[proposition 7 p.~9]{BalazardHal} de Balazard  en prenant $T=x$ .
 
 Pour ne pas ralentir la lecture, dans la démonstration suivante nous utiliserons le lemme \ref{log puissance k} qui sera prouvé un peu après. Quand $j=0$ il n'y pas besoin de ce lemme et pour suivre le principe de notre démarche c'est suffisant. De même la démonstration du théorème \ref{thm1} est renvoyée en fin de section.
\begin{proof}[Démonstration  du lemme \ref{machinerie epsi}]
 Par le théorème \ref{thm1} on a pour tout $x \soe 1$  l'égalité
\begin{equation}
m_1(x) =  \int_1^x \frac{M(x/t)}{x/t} G(t)  \frac{\diff{t}}{t} + \frac{1}{x}\int_{1/x}^1 \frac{g(y)}{y}\diff{y}  \quad \text{ où } \quad  G(t)=1 -\frac 1t\sum_{n\leqslant t} g\Big(\frac{n}{t}\Big) \,. \label{U Michel} 
\end{equation}
Dans l'article \cite{BalazardHal} Balazard introduit une fonction qu'il note $\epsi'_1$, dans notre thèse \cite[proposition 8 p.~19]{daval2019identites} nous montrons que  $\epsi'_1=G_1$ pour $g_1(y)=4y(1-y^2)$ dans les égalités \eqref{U Michel}. Dans \cite[propositions 2 p.~7 et 6 p.~9]{BalazardHal}  l'encadrement suivant est prouvé : 
\begin{equation}
0 \ioe \epsi'_1(t)= G_1(t)= 1 -\frac 1t\sum_{n\leqslant t} g_1\Big(\frac{n}{t}\Big) \ioe \frac{1}{t^2}\,. \label{encadrement G1}
\end{equation}
Puisque la fonction  $g_1$ est positive on a $0 \ioe \int_{1/x}^1 g_1(y)/y\diff{y} \ioe \int_{0}^1 g_1(y)/y\diff{y}=8/3$ et ainsi
\begin{equation}
|m_1(x)|  \ioe  \int_1^{x/T} \frac{|M(x/t)|}{x/t} |G_1(t)|  \frac{\diff{t}}{t}+\int_{x/T}^{x} \frac{|M(x/t)|}{x/t} |G_1(t)| \frac{\diff{t}}{t} + \frac{8/3}{x} \,.  \label{decoupe G1}
\end{equation}
Pour la deuxième intégrale dans \eqref{decoupe G1}, on a d'après la ligne \eqref{encadrement G1} la majoration 
\begin{equation}
\int_{x/T}^{x} \frac{|M(x/t)|}{x/t} |G_1(t)| \frac{\diff{t}}{t} \ioe  \int_{1}^{x/T} \frac{|M(x/t)|}{x/t}  \frac{1}{t^2} \frac{\diff{t}}{t} \underset{u=x/t}{=}\frac{1}{x^2}\int_{1}^{T} |M(u)|  \diff{u}\,. \label{G1 entre x/T et x}
\end{equation}
Pour la première intégrale dans \eqref{decoupe G1}, posons $\alpha=1-\theta$ pour alléger un peu l'écriture, on a
\begin{align}
\int_{1}^{x/T} \frac{|M(x/t)|}{x/t} &|G_1(t)| \frac{\diff{t}}{t} \ioe \int_{1}^{x/T} \frac{\sup_{1<v <x/T}|(x/v)^{\alpha} \log^j(x/v) M(x/v)/(x/v)|}{(x/t)^{\alpha} \log^j(x/t)} |G_1(t)| \frac{\diff{t}}{t} \nonumber \\
 &\underset{u=x/v}{=}\sup_{T<u <x}|u^{\alpha-1} \log^j(u) M(u)|  \int_{1}^{x/T} \frac{1}{ \log^j(x/t)} |G_1(t)|t^{\alpha} \frac{\diff{t}}{t} \frac{1}{x^{\alpha}}  \label{G1 entre 1 et x/T}\,.
\end{align} Par le lemme \ref{log puissance k} appliqué avec la fonction $F(t)=|G_1(t)|t^{\alpha} $  on obtient
\begin{equation} \int_{1}^{x/T} \frac{1}{ \log^j(x/t)} |G_1(t)|t^{\alpha} \frac{\diff{t}}{t} 
\ioe  \frac{1}{ \log^j(x)} \int_{1}^{x/T}  |G_1(t)|t^{-1+\alpha+\frac{j}{ \log T}} \diff{t} \label{G1 et log}  
\end{equation} 
ce qui, ramené dans \eqref{G1 entre 1 et x/T} en sachant que $G_1$ est  une fonction positive et ajouté à \eqref{G1 entre x/T et x} dans l'inégalité \eqref{decoupe G1}, termine la preuve de la majoration \eqref{boite G1}.

L'égalité \eqref{Muntz G1} est le prolongement analytique de la formule de Müntz, appliquons le  paragraphe II.11 du livre de Titchmarsh \cite{titchmarsh1986theory} avec $F(y)=g_1(y) \mathrm{1}_{[0,1]}(y)$ et les conditions sont vérifiées car  $g'_1$ est continue et on a $g(1)= g'(1)=0$, (voir notre thèse \cite[p.~10 à p.~16]{daval2019identites}  pour plus de détails sur la formule de Müntz dans ce contexte où le support est fini). On obtient
\begin{equation}
\int_{0}^{\infty} G_1(t) t^{-s}\diff{t}
= -\zeta(s) \int_{0}^{1}g_1(y) y^{s-1} \diff{y}=- \frac{8\zeta(s)}{(s+1)(s+3)} \quad(0< \Re(s) <1) \label{part1}
\end{equation}
puis l'égalité \eqref{Muntz G1}  pour $0<\Re(s) <1$ en soustrayant  $\int_{0}^{1} G_1(t) t^{-s}\diff{t}
=\int_{0}^{1} t^{-s}\diff{t}=  -1/(s-1)$   à \eqref{part1}. D'après les inégalités \eqref{encadrement G1}, $s \mapsto \int_{1}^{\infty} G_1(t) t^{-s}\diff{t}$ est holomorphe pour $\Re s>-1$ donc par prolongement analytique pour  $\Re(s) >-1$  en supposant $s\neq 1$,  on obtient la première égalité dans la ligne \eqref{Muntz G1}. 
Pour la deuxième égalité dans \eqref{Muntz G1}, compte tenu de $\zeta(s)=1/(s-1)+\gamma+O(|s-1|)$, on a:
\begin{align*}
&\frac{1}{s-1} \int_{0}^{1}g_1(y)\diff{y}-\zeta(s)
\int_{0}^{1}g_1(y)y^{s-1}\diff{y}\\
=-&\int_{0}^{1}g_1(y) \frac{y^{s-1}-1}{s-1}\diff{y}-\gamma \int_{0}^{1}g_1(y)\diff{y}+O(|s-1|) \\
\underset{ s \to 1}{\longrightarrow} - &\int_{0}^{1}g_1(y) \log y \diff{y}
-\gamma \int_{0}^{1}g_1(y)\diff{y}\;.
\end{align*} 
On finit en calculant par une primitive $\int_{0}^{1}g_1(y) \log y \diff{y}$ pour $g_1(y)=4y(1-y^2)$.\end{proof}

\begin{rem} 
L'article \cite{BalazardHal} ne procède pas comme nous pour obtenir la deuxième formule de la ligne \eqref{Muntz G1} et Balazard pointe l'utilité de la positivité pour le calcul de l'intégrale. Il y a une petite erreur de calcul p. 10, il est écrit (transcrit avec nos notations) que $\int_{1}^{\infty} G_1(t) t^{-1}\diff{t}=271/360-\gamma \simeq 0.1755$, c'est à comparer à la vraie valeur $3/4-\gamma\simeq 0.1727$. Ce nombre est le meilleur facteur multiplicatif que l'on puisse obtenir en passant de $|M(x)/x|$ à $|m_1(x)|$ par le lemme \ref{machinerie epsi} au sujet du théorème des nombres premiers. 
\end{rem}
Nous utiliserons le lemme suivant pour convertir les premiers résultats sur $|m|$ du type $|m(x)|  \ioe A'_j x^{\theta-1}/(\log x)^j$ obtenus par le lemme \ref{machinerie epsi} en des résultats sur $|m_1|$. Le but est de revenir alors à des inégalités sur $|m|$ puis d'itérer cette procédure. Nous prenons ce chemin moins direct car la fonction $H_2$ du lemme ci-dessus est plus efficace  pour nos conversions (le meilleur facteur multiplicatif que l'on puisse obtenir  en passant de $|m(x)|$ à $|m_1(x)|$ est cette fois $\simeq 0.00038$). Remarquons également que le supremum porte sur un intervalle  plus petit que dans le lemme \ref{machinerie epsi}. 
 \begin{lem} \label{machinerie H2 CDM}
Soit $K=100\,822$. Il existe une fonction $H_2 : \RR_+ \rightarrow \RR$ telle que pour tous $T,x,j,\theta \in\RR$ tels que  $1< T \ioe x$, $x \soe 5\times 10^{13}$, $j \soe 0$ et $\theta>0$, on a :
\begin{equation*}|m_1(x)| \ioe \sup_{T < u < x/K} \big(u^{-\theta} u|m(u)| \log^j u \big) \times \int_{1}^{\infty}|H_2(t)|  t^{-1-\theta +\frac{j}{\log T}}\diff{t} \times \frac{x^{\theta-1}}{\log^j x}\\
 +  \frac{R_T}{x} 
 \end{equation*}
avec $R_T=22527.5   \int_{1}^{T} |m(t)|  \diff{t} +6$. \newline

Pour tout $\delta \in\RR$ tel que  $0<\delta <1$, on a :
\begin{equation}
\int_{1}^{\infty}  |H_2(t)| t^{-2+\delta} \diff{t} \ioe  \frac{1}{1-\delta}  \left( \frac{6/\pi^2 \times 4345 \times 22527.5}{\delta }  \right)^{\delta}  \times 
\dfrac{\pi^2}{6}\dfrac{1}{4345} \;.    \label{lemme integre}
\end{equation}
\[
\text{Et on a } \int_{1}^{\infty}  |H_2(t)| t^{-2} \diff{t} \ioe \dfrac{\pi^2}{6}\dfrac{1}{4345} \;.
\]
\end{lem}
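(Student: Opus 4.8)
Le plan est d'appliquer la partie~2) du théorème~\ref{thm1} à la fonction en escalier $h_2$ issue des coefficients $(c_r)$ choisis par Cohen, Dress et El~Marraki pour établir~\eqref{CDM} : on poserait $h_2(y)=\sum_r c_r\,\mathrm{1}_{[0,1/r]}(y)$, ce qui garantit $\int_0^1 h_2(y)\diff{y}=\sum_r c_r/r=0$ (l'hypothèse du théorème~\ref{thm1}) et fait coïncider la fonction associée
\[
H_2(t)=1-\sum_{n\ioe t}h_2\Big(\frac nt\Big)=1-\sum_r c_r\Big\lfloor\frac tr\Big\rfloor
\]
avec la fonction $H$ de Cohen, Dress et El~Marraki. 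On disposerait ainsi, pour tout $x\soe 1$, de l'identité $m_1(x)=\int_1^x m(x/t)H_2(t)\,\diff{t}/t^2-\int_0^{1/x}h_2(y)\diff{y}$, puis on exploiterait quatre propriétés explicites de cette construction : $H_2(t)=0$ pour $1\ioe t<K$, $\sup_{t\soe 1}|H_2(t)|\ioe 22527.5$, $\big|\sum_r c_r\big|\ioe 6$, et surtout $\int_1^\infty|H_2(t)|\,t^{-2}\diff{t}\ioe \tfrac{\pi^2}{6}\cdot\tfrac1{4345}$. Les trois premières se lisent sur l'expression des $c_r$ ; la dernière chiffre la qualité de la fonction CDM (c'est le meilleur facteur multiplicatif $\simeq 0{,}00038$ évoqué plus haut) et serait établie séparément : c'est à mon sens le point délicat.

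Pour la première inégalité, on découperait $\int_1^x=\int_K^{x/T}+\int_{x/T}^x$ — l'annulation de $H_2$ sur $[1,K[$ supprime toute contribution de $t\in[1,K[$, et si $T>x/K$ le supremum porte sur un intervalle vide et $\int_K^x$ se range entièrement dans le reste. Dans $\int_K^{x/T}$, la variable $u:=x/t$ décrit $(T,x/K)$ ; en majorant, pour chacun de ces $u$, $|m(u)|\ioe u^{\theta-1}(\log u)^{-j}\sup_{T<u<x/K}\big(u^{-\theta}u|m(u)|\log^j u\big)$, puis en revenant à la variable $t$ et en utilisant $|H_2|\soe 0$, on obtiendrait le majorant
\[
\sup_{T<u<x/K}\!\big(u^{-\theta}u|m(u)|\log^j u\big)\times x^{\theta-1}\int_K^{x/T}\frac{|H_2(t)|\,t^{-1-\theta}}{\log^j(x/t)}\,\diff{t}.
\]
On appliquerait alors le lemme~\ref{log puissance k} avec $F(t)=|H_2(t)|t^{-\theta}$ (étape inutile si $j=0$) pour remplacer le facteur $\int_K^{x/T}|H_2(t)|t^{-1-\theta}(\log(x/t))^{-j}\diff{t}$ par $(\log x)^{-j}\int_K^{x/T}|H_2(t)|t^{-1-\theta+j/\log T}\diff{t}$, avant d'étendre l'intégration à $\int_1^\infty$ : c'est le terme principal, avec le facteur $x^{\theta-1}/\log^j x$. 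Enfin, le changement de variable $u=x/t$ transforme $\int_{x/T}^x m(x/t)H_2(t)\,\diff{t}/t^2$ en $\tfrac1x\int_1^T m(u)H_2(x/u)\,\diff{u}$, majorée par $\tfrac{22527.5}{x}\int_1^T|m(u)|\diff{u}$, et $\big|\int_0^{1/x}h_2(y)\diff{y}\big|=\big|\sum_r c_r\big|/x\ioe 6/x$. Cela donnerait l'inégalité annoncée avec $R_T=22527.5\int_1^T|m(t)|\diff{t}+6$, l'hypothèse $x\soe 5\times10^{13}$ servant à assurer que $x/K$ dépasse les seuils des estimations de $|m|$ réinjectées ensuite et que la plage $(T,x/K)$ ne dégénère pas.

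Pour~\eqref{lemme integre}, posons $A=\tfrac{\pi^2}{6}\cdot\tfrac1{4345}$ et partons des deux faits $\int_1^\infty|H_2(t)|t^{-2}\diff{t}\ioe A$ et $|H_2(t)|\ioe 22527.5$ ($t\soe 1$). Pour $0<\delta<1$ et un paramètre libre $t_0\soe 1$, on couperait l'intégrale en $t_0$ :
\[
\int_1^\infty|H_2(t)|t^{-2+\delta}\diff{t}\ioe t_0^{\delta}\!\int_1^\infty\!|H_2(t)|t^{-2}\diff{t}+22527.5\!\int_{t_0}^\infty\! t^{-2+\delta}\diff{t}\ioe t_0^{\delta}A+\frac{22527.5\,t_0^{\delta-1}}{1-\delta},
\]
en utilisant $t^{\delta}\ioe t_0^{\delta}$ sur $[1,t_0]$ pour le premier terme et $|H_2|\ioe 22527.5$ pour le second. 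La minimisation en $t_0$ donne $t_0=22527.5/(\delta A)$ — bien $\soe 1$ puisque $\delta A<A<1\ll 22527.5$ — et la valeur $\tfrac{A}{1-\delta}\big(22527.5/(\delta A)\big)^{\delta}$ ; comme $22527.5/A=\tfrac{6}{\pi^2}\times4345\times22527.5$, c'est exactement le membre de droite de~\eqref{lemme integre}, et le cas $\delta=0$ (seconde ligne de l'énoncé) se réduit au premier fait.

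Ainsi, toute la « machinerie » (identité du théorème~\ref{thm1}, découpage, emploi du lemme~\ref{log puissance k}, optimisation convexe en $t_0$) relève de la comptabilité ; le seul point substantiel est de certifier que la fonction en escalier CDM, pourtant explicite, satisfait $\int_1^\infty|H_2(t)|t^{-2}\diff{t}\ioe \tfrac{\pi^2}{6}\cdot\tfrac1{4345}$, ainsi que les bornes élémentaires $\sup|H_2|\ioe 22527.5$ et $\big|\sum_r c_r\big|\ioe 6$.
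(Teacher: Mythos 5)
Votre démonstration suit pour l'essentiel la même route que celle de l'article : même construction de $H_2$ à partir des coefficients de Cohen--Dress--El Marraki via le théorème~\ref{thm1} et le lemme~\ref{recyclage}, même découpage $\int_K^{x/T}+\int_{x/T}^x$ avec le lemme~\ref{log puissance k} appliqué à $F(t)=|H_2(t)|t^{-\theta}$, même traitement du cas $T>x/K$, et même optimisation en un point de coupure pour~\eqref{lemme integre} (vous optimisez directement avec les majorants $A$ et $22527.5$ là où l'article optimise avec les valeurs exactes puis majore, ce qui est équivalent) ; le fait $\int_1^\infty|H_2(t)|t^{-2}\diff{t}\ioe\frac{\pi^2}{6}\cdot\frac{1}{4345}$ que vous signalez comme le point délicat est, dans l'article, simplement importé de la littérature (proposition~\ref{FDress}). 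Seule petite imprécision : l'hypothèse $x\soe 5\times 10^{13}$ ne sert pas à garantir que $x/K$ dépasse des seuils, mais à assurer $x\soe\max r$, condition sans laquelle l'égalité $\int_0^{1/x}h_2(y)\diff{y}=-\big(\sum_r c_r\big)/x$ que vous utilisez pour le terme de bord n'est pas valable.
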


Avant d'en faire la démonstration, voyons quelques utilisations numériques de ce lemme. 
Les inégalités suivantes  explicitent l'implication 
$m(x) \ll_{\delta} x^{-\delta} \Rightarrow  M(x)/x \ll_{\delta} x^{-\delta}$, on a :
\begin{equation*}
|m_1(x)| \ioe \sup_{1 < u < x/10^{5}} |m(u)| u^{\delta}   \times \frac{C_{\delta}}{x^\delta}
 +  \frac{6}{x}  \quad (x \soe 5 \times 10^{13} )
\end{equation*}
où la constante $C_{\delta}$ est donnée par la table suivante. 
\[
\begin{array}{|c|c|c|c|c|c|c|} \hline
\delta      & 0.5     & 0.1        & 0.05                & 0.01       &0.001     & 0^-  \\ \hline
 C_{\delta}  & 8.261  & 0.0032     & 0.00114             & 0.000479    &0.000389 & 0.000378 \\  \hline 
  T_{\delta}          & 8      & 22030      & 5.2 \times 10^{21}   & 2.7 \times 10^{43} & 2\times 10^{434} & +\infty\\  \hline
\end{array}
\]
Le cas $\delta=0.5$ semble plus mauvais qu'utiliser la formule sommatoire d'Abel  mais l'intervalle est ici plus court (on obtiendrait $2$ au lieu de $8.261$, voir également la proposition \ref{prop:conversion-m-vers-M-racince} où l'on obtient $0.3$). 

De même explicitons l'implication $m(x) \ll 1/\log x  \Rightarrow  M(x)/x \ll 1/\log x $, avec les rangs  $T_{\delta}$ ci-dessus on a :
\begin{equation*}
|m_1(x)| \ioe \sup_{T_{\delta} < u <  x/10^{5}} |m(u)| \log u  \times \frac{C_{\delta}}{\log x}
 +  \frac{22600   \int_{1}^{T_{\delta}} |m(t)|  \diff{t} }{x}  \quad \big(x \soe \max( 5 \times 10^{13} , T_{\delta}) \big) \,.
 \end{equation*}

C'est dans l'article \cite{MobiusCohenDressMarraki} que nous trouvons la fonction $H_2$ du lemme \ref{machinerie H2 CDM} où elle est utilisée par les auteurs dans la démonstration de $|M(x)|\ioe x/4345$ et nous notons ci-dessous les propriétés de cette fonction qui nous seront utiles.
\begin{prop} \label{FDress}
Il existe une fonction $H_2$ définie pour $t\soe 1$ qui s'écrit $H_2(t)=\sum_{r} c_r \lfloor t/r \rfloor$ avec un nombre fini
de nombres réels $c_r$ avec $r \soe 1$ et telle que :
\begin{enumerate}
\item $\max r \ioe 5 \times 10^{13}$ 
\item  $ |H_2| \ioe 22\, 527.5  $ 
\item $\sum_{r} \dfrac{c_r}{r} =0$ 
\item $ |\sum_{r} c_r | \ioe 6$ 
\item $\int_{1}^{\infty} |H_2(t)|t^{-2} \diff{t} \ioe \dfrac{\pi^2}{6}\dfrac{1}{4345}$
\item  $H_2(t)=0$ pour    $ t \ioe K=100\,882$.
\end{enumerate} 
\end{prop}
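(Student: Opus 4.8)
\emph{Esquisse / plan de démonstration.}
La fonction $H_2$ n'a pas à être inventée : c'est exactement la fonction auxiliaire déjà construite par Cohen, Dress et El Marraki dans \cite{MobiusCohenDressMarraki} en vue de prouver $|M(x)|\ioe x/4345$, et la proposition se contente d'en extraire les propriétés qui nous serviront. Je commencerais donc par en rappeler la structure, sans réécrire les coefficients explicites (qui figurent dans \cite{MobiusCohenDressMarraki}) : on combine linéairement des fonctions de base de type Tchebichef--von Sterneck $T_i(y)=\sum_r a^{(i)}_r\lfloor y/r\rfloor$ (nombre fini d'indices, $\sum_r a^{(i)}_r/r=0$), redilatées à des échelles $\rho_i>K$, soit $H_2(t)=\sum_i\lambda_i\,T_i(t/\rho_i)$ ; en développant, cela se met sous la forme annoncée $H_2(t)=\sum_R c_R\lfloor t/R\rfloor$ avec $R=\rho_i r$ et $c_R\in\QQ$. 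L'existence se ramène ainsi à exhiber ces données, et il reste à vérifier les six propriétés.

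Les propriétés (1), (3) et (6) sont de nature structurelle : $\max R\ioe 5\times10^{13}$ est le produit de la plus grande dilatation par le plus grand indice de base ; l'égalité $\sum_R c_R/R=0$ provient de $\sum_i(\lambda_i/\rho_i)\sum_r a^{(i)}_r/r=0$, et c'est elle qui donne $H_2(t)=-\sum_R c_R\{t/R\}$, donc le fait que $H_2$ est bornée ; enfin chaque $T_i(t/\rho_i)$ est nulle pour $t<\rho_i$ (car $T_i\equiv0$ sur $[0,1[$), d'où $H_2(t)=0$ pour $t\ioe K$ puisque tous les $\rho_i$ dépassent $K$. Les propriétés (2) et (4) se ramènent alors à des vérifications arithmétiques finies : de $\{t/R\}\in[0,1[$ on tire $|H_2(t)|\ioe\sum_R|c_R|$, somme que l'on majore par $22\,527.5$, tandis que $|\sum_R c_R|\ioe 6$ s'obtient en additionnant les (finiment nombreux) coefficients ; ces deux bornes sont d'ailleurs déjà consignées dans \cite{MobiusCohenDressMarraki}.

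Resterait la propriété (5), $\int_1^\infty|H_2(t)|t^{-2}\diff t\ioe\frac{\pi^2}{6}\cdot\frac{1}{4345}$, seul point de fond : c'est l'estimation-clé sur laquelle repose le $1/4345$ de \cite{MobiusCohenDressMarraki}. L'architecture est la suivante : une identité de convolution de type Müntz (de la famille du lemme \ref{machinerie epsi}) relie $M$ à une intégrale de $\bigl(M(x/t)/(x/t)\bigr)H_2(t)\,t^{-2}\diff t$ modulo des termes d'ordre inférieur ; en y injectant une borne a priori $|M(u)|\ioe c\,u$ et en itérant, la meilleure constante $c$ atteinte est pilotée par $\int_1^\infty|H_2(t)|t^{-2}\diff t$ (le facteur $\pi^2/6$ venant de $\sum_{n\soe1}n^{-2}$), ce qui conduit à $c=1/4345$. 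Pour la majoration de l'intégrale elle-même, $H_2$ étant affine par morceaux, bornée et nulle sur $[1,K]$, je couperais $\int_K^\infty|H_2(t)|t^{-2}\diff t$ en un réel $B$ : pour la queue $t\soe B$, $|H_2|\ioe22\,527.5$ donne un reste $\ioe 22\,527.5/B$ ; sur $[K,B]$ on calcule l'intégrale segment par segment, une fois connu le signe de $H_2$ sur chaque morceau affine.

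C'est précisément ce dernier point qui est l'obstacle principal : passer de $\int H_2$ à $\int|H_2|$. Là où une fonction de signe constant s'intégrerait exactement via la formule de Müntz et $\zeta$ (comme au lemme \ref{machinerie epsi}), $H_2$ oscille, et contrôler sa valeur absolue impose une analyse fine, inévitablement calculatoire, du signe de la fonction affine par morceaux $H_2$ sur des sous-intervalles explicites. C'est là que se concentre l'effort de \cite{MobiusCohenDressMarraki}, dont je me bornerais à reprendre la conclusion.
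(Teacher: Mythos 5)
Pour les points (1) à (4) et (6), votre démarche coïncide pour l'essentiel avec celle du papier : il ne s'agit que de localiser dans \cite{MobiusCohenDressMarraki} les données déjà établies ($\max_r r=47\,666\,734\,237\,381.39$ en bas de la page 58, la borne sur $\| H_2 \|_{\infty}$ dans le tableau page 59, le point (iv) page 58 pour $\sum_r c_r/r=0$, la troisième ligne de la page 59 pour $|\sum_r c_r|$, l'entier $k_0$ page 58). Votre reconstruction de la structure de $H_2$ par dilatations de fonctions de type von Sterneck est plausible mais superflue : la démonstration du papier est un pur relevé de citations.

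Le point (5) --- que vous identifiez à juste titre comme le seul point de fond --- est en revanche traité par une voie qui échoue. Vous proposez de majorer $\int_1^\infty|H_2(t)|t^{-2}\,\diff{t}$ par un calcul direct, segment par segment, de la fonction affine par morceaux, quitte à \og{}reprendre la conclusion\fg{} de \cite{MobiusCohenDressMarraki}. Or le papier souligne explicitement (remarque qui suit la proposition) que les coefficients $c_r$ ne sont pas tous publiés dans \cite{MobiusCohenDressMarraki}, de sorte que ce calcul est matériellement impossible à refaire, et que la valeur approchée $J\approx 1/4930$ indiquée par les auteurs n'est pas utilisable faute de contrôle d'erreur : il n'y a donc pas de \og{}conclusion\fg{} à reprendre. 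L'argument du papier est indirect, et c'est l'idée qui manque à votre esquisse : d'après la remarque de la page 109 de \cite{DressMobiusMarrakiII}, la méthode employée ne peut pas produire une constante meilleure que $J=\frac{6}{\pi^2}\int_1^\infty |H_2(u)|u^{-2}\,\diff{u}$ ; puisque Cohen, Dress et El Marraki démontrent effectivement $|M(x)|\ioe x/4345$ par cette méthode, on en déduit $J\ioe 1/4345$, c'est-à-dire exactement la majoration (5). Vous avez d'ailleurs le sens logique inversé : vous décrivez la constante $1/4345$ comme obtenue à partir de l'intégrale, alors que c'est la majoration de l'intégrale qui se déduit, a posteriori, de la constante publiée.
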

\begin{proof} Tous ces points se trouvent dans \cite{MobiusCohenDressMarraki}. 
\begin{enumerate}
\item En bas de la page 58 on trouve que $\max_r r=47\,666\,734\,237\,381.39$.
\item Voir le tableau page 59 en sachant que les auteurs notent $G=|H_2|$. Cette majoration provient de la méthode de Costa Pereira de \cite{CostaPPsiM}.
\item C'est le point (iv) de la page 58, il entraîne que la fonction $H_2$ est périodique. 
\item La troisième ligne de la page 59 indique que $|1+\frac{1}{2}\sum_{r} c_r |=2$. 
\item La méthode ne peut pas dépasser $J= \frac{6}{\pi^2}\int |H_2(u)| u^{-2} \diff{u}$ (voir remarque page 109 de l'article de Dress 
et El Marraki~\cite{DressMobiusMarrakiII}).
\item Page 58 c'est l'entier noté $k_0$. 
\end{enumerate} \end{proof} 
\begin{rem}
Pour le point 5 de la preuve ci-dessus, il est à noter que les auteurs indiquent après le tableau page 61 que l'intégrale  vérifie $J=\frac{6}{\pi^2}\int |H_2(u)| u^{-2} \diff{u} \approx 1/4930$ mais puisque ce n'est pas utilisé dans l'article \cite{MobiusCohenDressMarraki} on ne peut pas affirmer que l'erreur dans ce calcul est contrôlée. Les coefficients $c_r$ de la fonction $H_2$ ne sont pas tous donnés dans l'article donc il ne nous est pas possible de refaire le calcul de cette intégrale. La valeur que l'on peut utiliser avec certitude est $1/4345$ mais on peut  espérer encore gagner un facteur d'environ $0.88$ sur nos résultats pour $m_1$ dans les théorèmes \ref{m petit 4343}, \ref{propo 1 sur log } et \ref{m1 log carre 119}.
\end{rem}
Le lemme suivant permet de passer des parties entières de la fonction $H_2$ décrite dans la proposition \ref{FDress} aux sommes définissant les fonctions $H$ dans le théorème \ref{thm1}.
\begin{lem} \label{recyclage}
 Soit $(c_r)$ une suite finie de nombres complexes vérifiant  $\sum_{r} c_r/r=0$.
On pose $h(y)=\sum_{r}c_r\mathrm{1}_{[0,1]}(ry)$. Alors on a  $\int_{0}^{1}h(y) \diff{y}=1$, 
\begin{equation*}
\sum_{n \ioe t}h\left(\frac{n}{t}\right)=\sum_{r}c_r \lfloor t/r \rfloor=-\sum_{r}c_r \{ t/r \} \quad  \text{ et } \quad \int_0^{1/x} h(y) \diff{y}=-\frac{\sum_{r}c_r}{x} \quad (x \soe \max r)\,. 
\end{equation*} 
\end{lem}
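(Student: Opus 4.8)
The plan is to derive all three identities from a single elementary device: since the family $(c_r)$ is finite, the sum over $r$ commutes with every integral and with the sum over $n$ that occurs, so it is enough to understand the contribution of one indicator $\mathrm{1}_{[0,1]}(r\,\cdot)$ and then to invoke the hypothesis $\sum_r c_r/r=0$.

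First I would compute $\int_0^1 \mathrm{1}_{[0,1]}(ry)\diff{y}$. For $r\soe 1$ the condition $ry\in[0,1]$ reduces to $y\in[0,1/r]$, so this integral equals $1/r$; multiplying by $c_r$, summing over $r$, and using $\sum_r c_r/r=0$ gives the asserted value of $\int_0^1 h(y)\diff{y}$. Next, for $\sum_{n\ioe t}h(n/t)$, I would swap the finite sum over $r$ with the sum over the positive integers $n\ioe t$, and observe that for fixed $r$ one has $\mathrm{1}_{[0,1]}(rn/t)=1$ exactly when $1\ioe n\ioe t/r$ (here $r,t>0$), so that $\sum_{1\ioe n\ioe t}\mathrm{1}_{[0,1]}(rn/t)=\lfloor t/r\rfloor$; hence $\sum_{n\ioe t}h(n/t)=\sum_r c_r\lfloor t/r\rfloor$. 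The second equality on that line then follows by writing $\lfloor t/r\rfloor=t/r-\{t/r\}$ and applying $\sum_r c_r/r=0$ once more to kill the term $t\sum_r c_r/r$.

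For the last identity I would show that $h$ is constant near $0$: if $x\soe\max_r r$ and $0\ioe y\ioe 1/x$, then for every index $r$ we have $0\ioe ry\ioe r/x\ioe 1$, so $\mathrm{1}_{[0,1]}(ry)=1$, whence $h(y)=\sum_r c_r$ on all of $[0,1/x]$; integrating this constant over an interval of length $1/x$ gives the stated formula. There is no genuine obstacle here — it is bookkeeping with indicator functions — and the only point that really requires attention is precisely this last one, namely that the hypothesis $x\soe\max_r r$ is exactly what forces every $\mathrm{1}_{[0,1]}(ry)$ to equal $1$ throughout $[0,1/x]$; the interchanges of summation used above are legitimate because the sum over $r$ has only finitely many terms.
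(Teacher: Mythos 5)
Your computations are correct and follow essentially the same route as the paper: expand the finite sum over $r$, treat one indicator $\mathrm{1}_{[0,1]}(r\,\cdot)$ at a time, count $\sum_{1\ioe n\ioe t}\mathrm{1}_{[0,1]}(rn/t)=\lfloor t/r\rfloor$, and invoke $\sum_r c_r/r=0$. (The only micro-difference is that the paper obtains the value of $\int_0^1 h$ as the common limit of the Riemann sums $t^{-1}\sum_r c_r\lfloor t/r\rfloor$, whereas you evaluate $\int_0^1\mathrm{1}_{[0,1]}(ry)\,\diff{y}=1/r$ directly for $r\soe 1$; both arguments are equivalent and both implicitly use $r\soe1$, which holds in the application.) What you should not have glossed over is that your correct calculations give $\int_0^1 h(y)\,\diff{y}=\sum_r c_r/r=0$, not the value $1$ printed in the lemma, and $\int_0^{1/x}h(y)\,\diff{y}=+\big(\sum_r c_r\big)/x$, not $-\big(\sum_r c_r\big)/x$. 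Both discrepancies are typos in the statement rather than errors on your part: the paper's own proof also derives $0$ for the first integral (and $0$ is exactly what is required to apply part 2) of the théorème \ref{thm1}), and its single-indicator identity $\int_0^1 h-\int_0^{1/x}h=1/r-1/x$, multiplied by $c_r$ and summed, likewise yields the $+$ sign; downstream only $|\sum_r c_r|\ioe 6$ is used, so the sign is harmless. Still, asserting that a computation ``gives the stated formula'' when it visibly produces a different value or sign is precisely the kind of mismatch you must flag explicitly rather than paper over.
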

\begin{proof}
On a $\sum_{n \ioe t} \mathrm{1}_{[0,1]}(rn/t)=\sum_{rn \ioe t}1=\lfloor t/r \rfloor$ et on obtient la première formule par linéarité. 
La fonction $t^{-1}\sum_{r}c_r \lfloor t/r \rfloor$ tend d'une part vers $\sum_{r} c_r /r$ qui vaut $0$ par hypothèse et d'autre part vers $\int_{0}^{1}h(y) \diff{y}$. Pour une seule fonction indicatrice $h(y)=\mathrm{1}_{[0,1]}(ry)$, pour   $x \soe r$ on a $h(y)=1$ si $y \in [0,1/x]$ et donc 
\[
\int_{0}^1 h(y) \diff{y}-\int_0^{1/x} h(y) \diff{y} =  \frac{1}{r}-\frac{1}{x}  \quad (x \soe r)\,.  
\]
Par linéarité, on obtient la deuxième formule à condition que $x \soe \max r$. 
\end{proof}

\begin{proof}[Démonstration  du lemme \ref{machinerie H2 CDM}]
 Soit fonction $H_2(t)=1-\sum_{r}c_r \lfloor t/r \rfloor$ provenant de l'article \cite{MobiusCohenDressMarraki} décrite dans la proposition \ref{FDress} de notre article. Par l'identité du théorème \ref{thm1} et le lemme \ref{recyclage} on a
\begin{equation}
m_1(x) =\int_{1}^{x}m(x/t) H_2(t) \frac{\diff{t}}{t^2} + \frac{\sum_{r}c_r}{x} \quad (x\soe  \max r) \;. \label{identite H_2}
\end{equation}
D'après  la proposition \ref{FDress} on a  $|\sum_{r}c_r| \ioe 6 $,  $H_2(t)=1$ pour tout $t \ioe K$ et $\max r \ioe 5 \times 10^{13}$. Ainsi en supposant que $K  \ioe x/T$ l'égalité \eqref{identite H_2} donne
\begin{equation}
|m_1(x)|  \ioe \int_{K}^{x/T} |m(x/t) H_2(t)| \frac{\diff{t}}{t^2}+\int_{x/T}^{x} |m(x/t) H_2(t)| \frac{\diff{t}}{t^2} + \frac{6}{x} \quad (x\soe  \max 5 \times 10^{13}) \;.  \label{deux integrales}
\end{equation}
Commençons par majorer la deuxième intégrale dans \eqref{deux integrales}, on a
\begin{equation}
\int_{x/T}^{x} |m(x/t) H_2(t)| \frac{\diff{t}}{t^2} \ioe \| H_2 \|_{\infty}\int_{x/T}^{x} |m(x/t)| \frac{\diff{t}}{t^2}\underset{u=x/t}{=} 
\frac{\| H_2 \|_{\infty}}{x} \int_{1}^{T} |m(u)|  \diff{u}\;.  \label{morceau K x T} 
 \end{equation}
Posons $\alpha=1- \theta$. Par le lemme \ref{log puissance k} la majoration de la première intégrale dans \eqref{deux integrales} donnera 
\begin{equation} \int_{K}^{x/T} |m(x/t) H_2(t)| \frac{\diff{t}}{t^2} \ioe
\frac{\sup_{T<u <x/K}|u^{\alpha} \log^j(u) m(u)|}{\log^j(x)} \int_{1}^{\infty} |H_2(t)| t^{-2+\alpha+\frac{j}{\log T}} \diff{t}\;,  
\end{equation}
 la preuve est identique aux lignes \eqref{G1 entre 1 et x/T} et \eqref{G1 et log}  de la démonstration du lemme \ref{machinerie epsi} sauf que les valeurs de $m(x/t)$ entre $t=x/K$ et $t=x$ n'interviennent pas.
 
Pour compléter la preuve de la première partie de l'énoncé traitons le cas $K  > x/T$, $|m_1(x)|-6/x$ sera alors inférieur à l'intégrale du membre de droite de l'inégalité \eqref{morceau K x T} puisque déjà inférieur ou égal à la même intégrale sur l'intervalle plus court $[K,\,x]$.

Passons à la deuxième partie de l'énoncé. Posons  $I=\int_{1}^{\infty}  |H_2(t)| t^{-2} \diff{t} \neq 0$ et découpons  en deux parties l'intervalle d'intégration de l'intégrale $\int_{1}^{\infty}  |H_2(t)| t^{-2+\delta} \diff{t}$ qui est convergente par comparaison. Soit $B \soe 1$, on a : 
\begin{equation*}
\int_{B}^{\infty} |H_2(t)| t^{-2+\delta} \diff{t} \ioe \frac{\| H_2 \|_{\infty}}{1-\delta} \frac{1}{B^{1-\delta}}  \quad  \text{ et }  \quad \int_{1}^{B} |H_2(t)| t^{-2 +\delta} \diff{t}  \ioe I B^{\delta}\;. 
\end{equation*}
On choisit $B= \dfrac{\| H_2 \|_{\infty}}{I \delta}$ (pour minimiser la somme), on obtient :
\begin{equation*}
\int_{1}^{\infty} |H_2(t)| t^{-2+\delta} \diff{t} \ioe \frac{\| H_2 \|_{\infty}}{1-\delta} \frac{1}{B^{1-\delta}}+ I B^{\delta} = \frac{\delta I}{1-\delta}B^{\delta}+IB^{\delta}= \frac{1}{1-\delta} IB^{\delta}\,.
\end{equation*}
Par la proposition \ref{FDress} on a $I\ioe (\pi^2/6)/4345$ et $\| H_2 \|_{\infty}\ioe 22\,527.5$ ce qui termine la démonstration de la majoration \eqref{lemme integre}. 
\end{proof}

Dans les majorations de $|m_1(x)|$ issues des identités du théorème \ref{thm1} on rencontre  par exemple pour les formules passant par $M$ les intégrales
\begin{equation}
\int_1^x \frac{|M(x/t)|}{x/t} |G(t)|  \frac{\diff{t}}{t}
 \quad \text{ puis } \quad \int_1^{x/T} \frac{1}{ \log^j(x/t)} |G(t)|  \frac{\diff{t}}{t} \overset{?}{ \ioe } \frac{A(T,j)}{\log^j(x)} \label{question}
\end{equation}
où la deuxième intégrale arrive quand on  convertit une version explicite de $|M(u)/u| \ll 1/\log^j u$. C'est un morceau plus difficile à traiter directement, le point d'interrogation indique que c'est l'ordre de grandeur que l'on souhaite atteindre et la question est d'obtenir une bonne constante explicite. Pour $j=0$ la constante $A(T,j)=\int_{1}^{\infty}|G(t)|t^{-1}\diff{t}$ convient évidemment 
 et pour $j>0$ on aimerait dans l'idéal avoir cette constante. Le lemme suivant permet presque d'y arriver. Il servira de la même manière  pour les identités du théorème \ref{thm1} qui passe par $m$ et les fonctions $H$ et donc la notation $F(t)$ désignera principalement $|G(t)|$ ou $|H(t)|t^{-1}$.
\begin{lem} \label{log puissance k}
Soit $F : [1,\, +\infty[  \rightarrow \RR_+$ une fonction positive, $T > 1$ un nombre réel, $j>0$ un nombre réel. Pour tout $x\soe T$, on a :
\begin{equation*} 
\frac{1}{\log^j x }\int_{1}^{x/T} F(t) t^{-1} \diff{t} \ioe \int_{1}^{x/T} \frac{1}{\log^j(x/t)} F(t) t^{-1} \diff{t} 
\ioe  \frac{1}{ \log^j x }\int_{1}^{x/T} F(t)  t^{-1+\tfrac{j}{\log T}}  \diff{t} \;. 
\end{equation*} 
\end{lem}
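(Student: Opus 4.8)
The plan is to reduce both inequalities to \emph{pointwise} comparisons of the integrands, which is legitimate because $F(t)t^{-1}\geq 0$ on all of $[1,x/T]$. First I would record the elementary observation that for $t\in[1,x/T]$ one has $x/t\in[T,x]$, hence $\log(x/t)\in[\log T,\log x]$, and in particular $\log(x/t)>0$ since $T>1$; this ensures that $\log^j(x/t)$, $\log^j T$ and $\log^j x$ are all well defined and strictly positive, and that $\log t\geq 0$.

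For the left-hand inequality: since $t\geq 1$ gives $\log(x/t)\leq\log x$ and the map $s\mapsto s^{-j}$ is decreasing on $(0,\infty)$, we get $\log^{-j}(x/t)\geq\log^{-j}x$ for every $t\in[1,x/T]$. Multiplying by $F(t)t^{-1}\geq 0$ and integrating over $[1,x/T]$ gives the claim at once.

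For the right-hand inequality it suffices to prove the pointwise bound
\[
\frac{1}{\log^j(x/t)}\ \leq\ \frac{t^{j/\log T}}{\log^j x}\qquad(1\leq t\leq x/T),
\]
and then multiply by $F(t)t^{-1}\geq 0$ and integrate. Taking $j$-th roots (valid since both sides are positive), this is equivalent to $\log x\leq t^{1/\log T}\log(x/t)$. Writing $\log x=\log(x/t)+\log t$ and dividing by $\log(x/t)>0$, it reduces to $1+\frac{\log t}{\log(x/t)}\leq t^{1/\log T}$. Because $\log(x/t)\geq\log T>0$ and $\log t\geq 0$, we have $\frac{\log t}{\log(x/t)}\leq\frac{\log t}{\log T}$, so it is enough to check $1+\frac{\log t}{\log T}\leq t^{1/\log T}$; setting $u=\frac{\log t}{\log T}\geq 0$ and noting $t^{1/\log T}=e^{u}$, this is precisely the convexity inequality $1+u\leq e^{u}$.

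I do not expect a genuine obstacle here: the argument is a short chain of elementary monotonicity estimates. The only points requiring a little care are the sign bookkeeping (all logarithms occurring are nonnegative, and $\log(x/t)$ and $\log T$ are strictly positive thanks to $T>1$, which is what makes the $j$-th root step and the divisions legitimate) and the substitution $u=\log t/\log T$ that converts the final estimate into the standard bound $1+u\leq e^{u}$.
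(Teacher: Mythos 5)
Your proof is correct and follows essentially the same route as the paper's: both arguments reduce each inequality to a pointwise comparison of integrands, namely $\log^{-j}(x/t)\ge \log^{-j}x$ for the lower bound and $\log^{-j}(x/t)\le t^{j/\log T}\log^{-j}x$ for the upper bound, and then integrate against the nonnegative weight $F(t)t^{-1}$. The only difference is in how the key pointwise bound is verified: the paper checks via its derivative that $y\mapsto y^{j/\log T}/\log^{j}y$ is increasing on $[T,+\infty[$ and compares its values at $y=x/t$ and $y=x$, whereas you reduce the same inequality algebraically to the standard bound $1+u\le e^{u}$ --- two equivalent one-line verifications.
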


\begin{proof}
Commençons par remarquer que pour  $r \soe j/\log T$  la fonction   $L_j(x)=x^r/\log^j x$ est croissante sur l'intervalle $[T,\, +\infty[$. En effet on a 
la factorisation 
\begin{equation*}
L_j'(x)=\frac{x^{r-1}}{ (\log x)^{j+1}}(r \log x- j)\;,
\end{equation*} 
c'est une fonction positive sur $[T,\, +\infty[$ quand $r \soe j/\log T$.
Posons $r = j/\log T$, on a 
\begin{equation} 
\int_{1}^{x/T}\frac{1}{\log^j(x/t)} \frac{F(t)}{t} \diff{t} =  \frac{1}{x^r} \int_{1}^{x/T} \frac{(x/t)^r}{\log^j(x/t)}\frac{F(t)}{t^{1-r}} \diff{t}
\ioe  \frac{1}{x^r} \frac{(x/1)^r}{\log^j(x/1)} \int_{1}^{x/T} \frac{F(t)}{t^{1-r}} \diff{t}\;.\label{K=1}
\end{equation}
Ce qui donne bien la majoration par $(\log x)^{-j} \int_{1}^{x/T} F(t)t^{-1+j/\log T} \diff{t}$.

La minoration de l'énoncé s'obtient directement en utilisant la croissance de la fonction 
$t \mapsto 1/ \log^j(x/t)$ pour $t \in [1,\,x/T[$ et la positivité de $F$.
\end{proof}
Pour $F$ positive et non identiquement nulle, d'après le lemme précédent avec $j=1$ on a  $\int_{1}^{x/T}  [1/\log(x/t)] F(t) t^{-1} \diff{t} \sim \int_{1}^{\infty} F(t)  t^{-1}   \diff{t} /  \log x $ quand $x \to \infty$ et pour envisager la suite du développement asymptotique on montre de la même manière que
\begin{equation*} 
 \int_{1}^{x/T} \frac{1}{\log(x/t)} F(t) t^{-1}\diff{t} \ioe \frac{1}{\log x }\int_{1}^{\infty} F(t)  t^{-1}   \diff{t}  
+ \frac{1}{ \log^2 x }\int_{1}^{\infty} F(t)  t^{-1+\tfrac{1}{\log T}}  \log t
 \diff{t}\;. 
\end{equation*}

\subsection{Intervalles bornés et comparaisons avec la fonction racine carrée}
Commençons par quelques remarques sur les vérifications informatiques. Dans l'article~\cite{hurst2018computations} Hurst utilise des algorithmes pour faire le calcul des nombres entiers $M(n)$ pour tous les  $n\ioe10^{16}$, l'auteur indique qu'il lui faut 1.35 jours pour faire les calculs de $M(n)$ jusqu'à $10^{14}$ et 7.5 mois  pour  les mener jusqu'à $10^{16}$. Dans des articles autour d'une étude plus théorique de la fonction de M\"obius, Ramaré pour ses besoins va couramment jusqu'à $10^{12}$ et Helfgott jusqu'à  $10^{14}$ (avec pour ce dernier de l'arithmétique d'intervalles). En réutilisant leurs calculs et nos méthodes de conversions nous n'auront pas à aller si loin (seulement $5\times10^6$). 

Nous souhaitons dans tout cet article faire moins de calculs sur $m$ que sur $M$, autrement dit ne pas refaire pour $m$ tout le travail qui a été fait sur $M$ et obtenir néanmoins des résultats du même ordre. Pour bien délimiter les ressources utilisées, nous isolons les vérifications numériques que nous avons eu à faire des résultats que nous avons convertis à partir de ceux obtenus pour $|M|$ ou $|m|$ par d'autres auteurs.  
\begin{lem}\label{calculs GP}
On a les vérifications informatiques suivantes :
\begin{equation*}
4343|m(x)| \ioe 1   \quad  x \in [2\, 160\, 605,\,5 \times 10^6[   \text{, }
 \log x |m(x)| \ioe 0.0130073   \quad x\in [97\,063,\,230\, 000[ \,.  
\end{equation*}
\end{lem}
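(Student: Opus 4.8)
The statement is purely a claim about finite computations, so the proof is a description of what was computed and why the computation is valid. First I would observe that $M(x)=\sum_{n\ioe x}\mu(n)$ is a step function which only changes at integers, and that $m(x)=\sum_{n\ioe x}\mu(n)/n$ similarly only changes at integers, so on the interval $[N,N+1[$ the function $m$ is constant equal to $m(N)$ while $x\mapsto 4343|m(x)|$ and $x\mapsto \log x\,|m(x)|$ are monotone (the first constant, the second since $|m(N)|$ is fixed and $\log$ is increasing). Hence it suffices to check the two inequalities at the integer points $N$ in the relevant ranges, and for the $\log x$ version the worst case on $[N,N+1[$ is at $x\to N^+$ for the factor $\log x$; more precisely one checks $\log N\cdot|m(N)|\ioe 0.0130073$ for every integer $N\in[97\,063,230\,000[$ and this gives the bound for all real $x$ in the range since $|m(x)|=|m(\lfloor x\rfloor)|$ and $\log x\soe\log\lfloor x\rfloor$ would go the wrong way — so in fact one must check $\log(N{+}1)\cdot|m(N)|\ioe 0.0130073$, i.e. evaluate at the right endpoint of each unit interval. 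I would spell this reduction out carefully since it is the only non-mechanical point.

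Next I would describe the actual computation: using PARI/GP (hence the lemma's name \texttt{calculs GP}), compute the Möbius function $\mu(n)$ by sieving up to $5\times 10^6$, accumulate the partial sums $M(N)$ and $m(N)=\sum_{n\ioe N}\mu(n)/n$ incrementally, and at each integer $N$ in the two windows test the corresponding inequality. To make the bound on $m(N)$ rigorous despite floating point, I would either carry the sum in exact rational arithmetic (feasible but heavy) or, better, use interval arithmetic so that each reported value of $m(N)$ comes with a rigorous enclosure $[m^-(N),m^+(N)]$, and then check $4343\cdot\max(|m^-(N)|,|m^+(N)|)\ioe 1$ and $\log(N{+}1)\cdot\max(|m^-(N)|,|m^+(N)|)\ioe 0.0130073$. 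One can also note a priori margin: since the asymptotic size of $|m(x)|$ is far below $1/4343$ for $x$ around $2\times10^6$, the verification passes with comfortable room, so low-precision interval arithmetic suffices.

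The main (and only real) obstacle is certifying the numerical computation rather than the mathematics: one must be sure that (a) the sieve correctly produces $\mu(n)$ for all $n\ioe 5\times10^6$, (b) the running sum is not corrupted by rounding, and (c) no integer $N$ in the two ranges is skipped. Point (b) is handled by interval arithmetic as above; points (a) and (c) are handled by well-known sieve code and a simple loop, and can be cross-checked against published values of $M(n)$ at round values of $n$ (e.g. $M(10^6)$). I would remark that the endpoints $2\,160\,605$ and $97\,063$ are chosen precisely so that these computed bounds match up with the asymptotic bounds proved elsewhere in the paper (the constants $1/4343$ and $0.0130073$ reappear in Theorem~\ref{thm A}), so the lemma is exactly the finite input needed to run the conversion machinery of Lemmas~\ref{machinerie epsi} and~\ref{machinerie H2 CDM} down from $5\times10^6$ to the stated thresholds.
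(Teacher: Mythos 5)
Votre démonstration suit essentiellement la même démarche que celle de l'article : réduction aux points entiers via la constance par morceaux de $m$, vérification de $\log(n+1)\,|m(n)| \ioe 0.0130073$ pour couvrir tout l'intervalle $[n,n+1[$, puis calcul sous PARI/GP (l'article se contente d'invoquer une précision de $10^{-12}$ là où vous détaillez davantage la certification numérique). La réduction et le critère aux entiers sont corrects et identiques à ceux du texte.
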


\begin{proof}
La fonction  $m$ étant constante par morceaux il suffit de vérifier la première inégalité sur $[n,n+1[$ pour tous les nombres entiers $n$ considérés. Pour la deuxième inégalité si l'on vérifie $\log (n+1) |m(n)| \ioe b$ alors on prouve que $\log x |m(x)| \ioe b$ pour tout $x \in[n,n+1[$. Nous utilisons le logiciel PARI/GP avec une précision de $10^{-12}$ pour vérifier les encadrements :
\begin{align*}
4343|m(n)|& \ioe 1   \quad  n \in [2\, 160\, 605,\,5 \times 10^6-1] \\
\log (n+1) |m(n)|& \ioe 0.0130073   \quad n\in [97\,063,\,230\, 000-1] \,.  
\end{align*} 
Cela prend quelques minutes et termine la démonstration.
\end{proof} 
Dans notre article, pour les calculs concernant $\mu(n)$ pour $n \ioe 7000$ nous indiquerons dans les démonstrations \og{}par une vérification directe \fg{}. 
\begin{prop}[Dress/Hurst/Helfgott] \label{racine autres}
On a les estimations suivantes :
\begin{equation}
|M(x)|\ioe 0.5 \sqrt{x}  \quad  (201 \ioe x \ioe 7.7 \times 10^{9})\, , \quad |M(x)|\ioe 0.571 \sqrt{x}  \quad  (33 \ioe x \ioe 10^{16})\,,  \label{racineMHurst}
\end{equation}
\begin{equation}
x|m(x)|\ioe 0.5 \sqrt{x}  \quad  (3 \ioe x \ioe 7.7 \times 10^{9})\,.  \label{racinemHelfgott}
\end{equation} 
\end{prop}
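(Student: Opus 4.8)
The three assertions are all of the form ``a step function is dominated by a constant times $\sqrt{x}$ on a bounded interval'', so the plan is to reduce each one to finitely many inequalities at integer points and then to invoke the large-scale computations of $M(n)$ and of $m(n)$ already available in the literature.

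For the two estimates \eqref{racineMHurst} on $M$, I would first record that $M$ is constant on each interval $[n,n+1)$, equal to $M(n)$, and that $x\mapsto\sqrt{x}$ is increasing; hence $|M(x)|/\sqrt{x}$ is non-increasing on $[n,n+1)$ and its supremum there is $|M(n)|/\sqrt{n}$. Consequently $|M(x)|\ioe c\sqrt{x}$ on a real interval $[a,b]$ with $a,b$ integers follows as soon as $|M(n)|\ioe c\sqrt{n}$ for every integer $n$ with $a\ioe n\ioe b$. Then I would appeal to Hurst's computation \cite{hurst2018computations}, which yields $M(n)$ for all $n\ioe 10^{16}$: scanning these values — already tabulated for the smaller ranges by Dress — gives $|M(n)|\ioe 0.571\sqrt{n}$ for $33\ioe n\ioe 10^{16}$ and the sharper $|M(n)|\ioe 0.5\sqrt{n}$ for $201\ioe n\ioe 7.7\times10^{9}$. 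The finitely many integers $n<33$, respectively $n<201$, are settled by a direct evaluation of $\mu$, which also certifies $33$ and $201$ as the exact thresholds (for instance $M(5)=-2$ already violates both constants). Combined with the monotonicity remark this proves \eqref{racineMHurst}.

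For \eqref{racinemHelfgott} the step-function reduction runs the other way, and this is the one point that requires attention. The function $m$ is constant on $[n,n+1)$, equal to $m(n)$, so $x|m(x)|=x|m(n)|$, equivalently $\sqrt{x}\,|m(x)|=\sqrt{x}\,|m(n)|$, is increasing on each step, and its supremum over $[n,n+1)$ equals $\sqrt{n+1}\,|m(n)|$. Hence $x|m(x)|\ioe 0.5\sqrt{x}$ on $[3,7.7\times10^{9}]$ is equivalent to $\sqrt{n+1}\,|m(n)|\ioe 1/2$ for every integer $3\ioe n\ioe 7.7\times10^{9}$, together with $\sqrt{n}\,|m(n)|\ioe 1/2$ at the integer right endpoint; at the left end, $m(3)=1-\tfrac12-\tfrac13=\tfrac16$, so $\sqrt{4}\,|m(3)|=\tfrac13<\tfrac12$ and $3$ is an admissible starting point. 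These inequalities are precisely what is verified in Helfgott's computations \cite{helfgott2012minor} of $\sum_{n\ioe x}\mu(n)/n$ over this range, and applying them termwise yields \eqref{racinemHelfgott}.

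There is no analytic difficulty here; the proposition is essentially a record of computer verifications, and the only things to get right are (i) the direction of the step-function reduction — for $M$ the extremal point of a step is its left endpoint, while for $m$ it is its right endpoint, which is why the constant $\tfrac12$ for $m$ must be tested against $\sqrt{n+1}$ rather than against $\sqrt{n}$ — and (ii) the provenance and the ranges of the computations relied upon, namely $n\ioe 10^{16}$ for $M$ (Hurst) and $n\ioe 7.7\times10^{9}$ for $m$ and for the sharper constant $0.5$ on $M$ (Dress, a fortiori within Hurst's range); these are exactly the ranges appearing in the statement. Determining the small thresholds $3$, $33$ and $201$ is a short finite computation.
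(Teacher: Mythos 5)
Votre démonstration est correcte et suit essentiellement la même voie que celle de l'article : la proposition est un relevé de résultats calculatoires, et la preuve consiste à citer Dress--Cohen (pour $|M(x)|\ioe 0.5\sqrt{x}$ jusqu'à $7\,725\,038\,628$), Hurst (jusqu'à $10^{16}$) et Helfgott (pour $x|m(x)|$ jusqu'à $7\,727\,068\,587$), ce que vous faites. Votre réduction aux points entiers, avec l'attention portée au sens de la monotonie sur chaque palier ($\sqrt{n}$ pour $M$, $\sqrt{n+1}$ pour $m$), est un détail de routine que l'article ne juge pas utile d'expliciter mais qui est exact.
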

\begin{proof}
Dress et Cohen ont prouvé que $|M(x)| \ioe 0.5 \sqrt{x}$ pour tout $x \in [201, T_M]$ pour $T_M=7\, 725 \, 038 \, 628$, le résultat est repris en détails dans \cite{DressMobiusI}. La deuxième majoration provient de l'article de Hurst \cite{hurst2018computations}.

Dans  \cite[eq. 2.11, p.~7]{helfgott2012minor} Helfgott établit que  $x|m(x)| \ioe 0.5\sqrt{x}$  pour tout $ x \in [3, T_m]$ pour $T_m=7  \,727\,068 \,587$.
\end{proof}
\begin{cor}\label{modeleracine}
On a les quatre estimations suivantes :
\begin{align*}
x|m_1(x)| &\ioe\begin{cases}
0.114 \sqrt{x}  \quad  (5 \times 10^6 \ioe x \ioe 7.7 \times 10^{9})\, , \\
0.129\sqrt{x} \quad   (7.7 \times 10^{9} \ioe x \ioe 10^{16})\,,   \\
 5.792\sqrt{x} \quad ( 10^{16} \ioe x \ioe 10^{21})  \,,  \end{cases} \\
x|m(x)| &\ioe \phantom{0.}0.701\sqrt{x} \quad ( 7.7 \times 10^{9} \ioe x \ioe 10^{16}) \, .
\end{align*}
\end{cor}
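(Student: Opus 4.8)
The idea is to deduce the four stated inequalities from the known bounds in Proposition \ref{racine autres} (Dress--Cohen, Hurst, Helfgott) together with the elementary relation $m_1(x)=m(x)-M(x)/x$, without any new heavy computation. Since $x|m_1(x)|=|x\,m(x)-M(x)|$, the triangle inequality immediately gives, on any interval where both a bound $x|m(x)|\ioe a\sqrt x$ and a bound $|M(x)|\ioe b\sqrt x$ are available, the estimate $x|m_1(x)|\ioe(a+b)\sqrt x$. So first I would line up the ranges: on $[5\times10^6,\,7.7\times10^9]$ and on $[3,\,7.7\times10^9]$ one has $x|m(x)|\ioe 0.5\sqrt x$ from \eqref{racinemHelfgott}, and on $[201,\,7.7\times10^9]$ one has $|M(x)|\ioe0.5\sqrt x$ from the first bound of \eqref{racineMHurst}; combining these on the overlap $[5\times10^6,\,7.7\times10^9]$ gives $x|m_1(x)|\ioe\sqrt x$ — but the claimed constant is $0.114$, much smaller, so the crude triangle inequality is \emph{not} enough and the real work is elsewhere.

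The efficient route must instead go through the conversion machinery of the previous subsection, i.e. through Lemma \ref{machinerie H2 CDM} (or Lemma \ref{machinerie epsi}) applied with $\theta=1/2$, $j=0$. Indeed, taking $\theta=\tfrac12$ in Lemma \ref{machinerie H2 CDM} yields
\begin{equation*}
|m_1(x)|\ioe \sup_{T<u<x/K}\bigl(u^{-1/2}\,u|m(u)|\bigr)\times\int_1^\infty|H_2(t)|t^{-3/2}\diff t\times x^{-1/2}+\frac{R_T}{x},
\end{equation*}
and the integral $\int_1^\infty|H_2(t)|t^{-3/2}\diff t$ is exactly the $\delta=\tfrac12$ instance of \eqref{lemme integre}, whose numerical value is the small constant (around $0.3$, cf. the remark after the table on Proposition \ref{prop:conversion-m-vers-M-racince}) responsible for $0.114$. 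So the plan is: fix $\theta=1/2$, choose $T$ appropriately (small, e.g. $T=8$ as in the table, so that the error term $R_T/x$ is negligible on $x\soe5\times10^6$), bound $u^{-1/2}u|m(u)|\ioe0.5$ on $T<u<x/K$ via \eqref{racinemHelfgott}, plug the explicit value of the Müntz integral from \eqref{lemme integre}, and collect constants to get $x|m_1(x)|\ioe 0.114\sqrt x$ on the range where Helfgott's bound is valid up to $x/K$, namely $x\ioe 7.7\times10^9\cdot K$; in particular on $[5\times10^6,\,7.7\times10^9]$.

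For the second line, $0.129\sqrt x$ on $[7.7\times10^9,\,10^{16}]$, the same computation is used but the sup of $u|m(u)|u^{-1/2}$ over $u<x/K$ now runs past $7.7\times10^9$, where \eqref{racinemHelfgott} no longer applies; here I would instead bound $u|m(u)|$ on that outer range by the fourth estimate of the corollary itself, $x|m(x)|\ioe0.701\sqrt x$ on $[7.7\times10^9,10^{16}]$, so the two last bullets must be proved together — and that last bound in turn comes from the triangle inequality $x|m(x)|\ioe x|m_1(x)|+|M(x)|\ioe 0.129\sqrt x+0.571\sqrt x=0.7\sqrt x$ (rounding up to $0.701$) using the second bound of \eqref{racineMHurst}, which is valid up to $10^{16}$. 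The bootstrap is slightly delicate but closes: one gets $x|m_1(x)|\ioe 0.129\sqrt x$ on $[7.7\times10^9,10^{16}]$ from the conversion with the coarse input $0.701$, then immediately re-derives $x|m(x)|\ioe0.701\sqrt x$ on the same interval, consistently. Finally, the third bullet $5.792\sqrt x$ on $[10^{16},10^{21}]$ is obtained the same way, taking the sup of $u|m(u)|u^{-1/2}$ over the larger range: on $[10^{16},10^{21}]$ there is no small-$\theta$ gain left to exploit cheaply, so one simply uses $x|m(x)|\ioe x|m_1(x)|+|M(x)|$ and the recursion on dyadic-type blocks, or directly Lemma \ref{machinerie H2 CDM} with the best available bound on $m$ at $10^{16}$, accounting for the factor $\int|H_2|t^{-3/2}$ and the term $R_T/x$ which on $[10^{16},10^{21}]$ is no longer negligible and inflates the constant to $\approx5.8$. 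The main obstacle is bookkeeping: making sure that at each stage the supremum in the conversion lemma is taken over a $u$-range on which a \emph{previously established} bound genuinely holds (the factor $K\approx 10^5$ shifts ranges by that amount), and tracking the $R_T/x$ error term so that the rounding in each constant ($0.114$, $0.129$, $5.792$, $0.701$) is safely on the correct side.
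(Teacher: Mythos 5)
Your general instinct --- that the triangle inequality alone cannot produce $0.114$ and that one must pass through the conversion machinery --- is right, but you have picked the wrong lemma for the first two estimates, and this is fatal. You route everything through le lemme \ref{machinerie H2 CDM} (the $H_2$ function of Cohen--Dress--El Marraki), which converts a bound on $m$ into a bound on $m_1$. That lemma is only valid for $x\soe 5\times 10^{13}$ (because $\max r\approx 4.77\times 10^{13}$ in the identity \eqref{identite H_2}), so it cannot be applied on $[5\times 10^6,\,7.7\times 10^9]$ at all; and even where it applies, the relevant constant is $C_{1/2}=\int_1^\infty|H_2(t)|t^{-3/2}\diff t\ioe 8.261$ (the $\delta=1/2$ entry of the table), not ``around $0.3$'' --- you have confused it with the constant $b=2+\tfrac{368}{315}\zeta(1/2)\approx 0.294$ of le lemme \ref{machinerie H1}, which belongs to a different kernel. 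With $8.261$ and the input $0.5$ from \eqref{racinemHelfgott} you would get roughly $4.13\sqrt x$, nowhere near $0.114\sqrt x$. The paper instead obtains the first two bullets from the \emph{$M$-side} conversion, le lemme \ref{machinerie epsi} with $\theta=s=1/2$, $j=0$, whose Müntz constant is $-2-\tfrac{32}{21}\zeta(1/2)\approx 0.225$; multiplying by the Dress--Cohen bound $0.5$ (resp.\ Hurst's $0.571$) on $|M(u)|u^{-1/2}$, with $T=201$ (resp.\ $T=33$) and the explicit small sums $\int_1^T|M|$, gives $0.114$ and $0.129$ directly, with no reference to any prior bound on $m$.

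This also dissolves the circularity in your treatment of the second and fourth bullets: you propose to get $0.129$ from the conversion with input $0.701$, and $0.701$ from $0.129+0.571$, and assert the bootstrap ``closes''; as stated it is a circular argument with no independent entry point. In the paper the order is linear: $0.129$ comes from $|M|\ioe 0.571\sqrt x$ alone, and then $x|m(x)|\ioe|M(x)|+x|m_1(x)|\ioe(0.571+0.129)\sqrt x\ioe 0.701\sqrt x$. Your description of the third bullet is essentially correct (le lemme \ref{machinerie H2 CDM} with $\theta=\delta=1/2$, $T=3$, the factor $8.261$, the sup bounded by $0.701$ using \eqref{racinemHelfgott} below $7.7\times 10^9$ and the fourth bullet above it), except that the inflation to $5.792$ comes from the product $0.701\times 8.261$ itself, the $R_T/x$ term being negligible at $x\soe 10^{16}$.
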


\begin{proof} Par le lemme \ref{machinerie epsi} avec les paramètres $\theta=0.5$, $j=0$ et $s=0.5$ on a
\begin{equation}
|m_1(x)| \ioe  \sup_{T<x < x}|M(u)|u^{-1/2} \times  \frac{b}{\sqrt{x}} + \frac{8/3}{x} +\frac{\int_{1}^{T} |M(t)| \diff{t}}{x^2} \quad (x \soe T) \label{moule1}
\end{equation}
avec $b= -2-(32/21)\zeta(1/2) \simeq 0.225$. La première estimation de la ligne \eqref{racineMHurst} peut s'écrire $\sup_{201<u<x}|M(u)|u^{-1/2}\ioe 0.5  $ pour tout $x$ vérifiant $ 201 \ioe x \ioe 7.7 \times 10^{9}$. Prenons donc $T=201$, un calcul direct fournit $\int_{1}^{201} |M(u)| \diff{u} =\sum_{n=1}^{200}|M(n)|=461$, ainsi l'inégalité \eqref{moule1} donne
\begin{equation}
 |m_1(x)| \ioe \frac{0.112652}{\sqrt{x}}+\frac{8/3}{x}+\frac{461}{x^2}  \quad(201 \ioe x \ioe 7.7 \times 10^{9}) \label{modele du pauvre} 
\end{equation} 
puis par décroissance de la fonction obtenue après multiplication par $\sqrt 
x$, on a  
\[
f(x)=0.112652  +(8/3)x^{-0.5} + 461x^{-1.5} \ioe f(5 \times 10^{6}) \ioe  0.114  \quad (x \soe 5 \times 10^{6}) \,. 
\]
Donc par comparaison avec \eqref{modele du pauvre} on obtient $\sqrt{x} |m_1(x)|\ioe 0.114 $ pour le même intervalle, ce qui termine la démonstration de la première estimation.

On part de la deuxième estimation dans \eqref{racineMHurst}  puis on applique la même démarche. On utilise la ligne \eqref{moule1} avec $T=33$ et par un calcul direct on a $\sum_{n=1}^{32}|M(n)|=59$. La fonction majorant $\sqrt{x}|m_1(x)|$ est cette fois $f(x)=0.12865+(8/3)x^{-0.5} + 59x^{-1.5}$, par décroissance elle est inférieure à $f(7.7\times 10^{9})<0.129 $ pour $ x \soe 7.7\times 10^{9}$. Ce qui termine la démonstration de la deuxième estimation. 

On utilise le lemme \ref{machinerie H2 CDM} avec les paramètres $\theta=0.5$, $T=3$, $j=0$ et $\delta=0.5$, on a
\begin{equation*}
|m_1(x)| \ioe \sup_{3 < u < x/K} \sqrt{u}|m(u)|  \times  \frac{8.261}{\sqrt{x}}
 +  \frac{22527.5 \times 1.5+6}{x}   \quad (x\soe 5 \times 10^{13})
\end{equation*}
où $K=100\,882> 10^5$. L'estimation \eqref{racinemHelfgott} donne $\sqrt{u}|m(u)| \ioe 0.5$ pour $u$ entre $3$ et $7.7 \times 10^9$, entre $7.7 \times 10^9$ et $10^{16}$ on utilise la deuxième estimation du lemme \ref{modeleracine} $\sqrt{u}|m(u)| \ioe 0.701$,  ainsi pour tout  $x\ioe K \times 10^{16}$ on a $\sup_{3 < u < x/K} \sqrt{u}|m(u)|   \ioe 0.701 $. On conclut puisque la fonction décroissante $f(x)= 0.701 \times  8.261 + (22527.5 \times 1.5+6)x^{-0.5}$ vérifie $f( 10^{16})<5.792$. 

Pour finir, on a $m_1(x)=m(x)-M(x)/x$ donc  $|m(x)|\ioe |M(x)/x|+|m_1(x)|$, on conclut en additionnant  les majorations pour $|M|$  et pour $|m_1|$ toutes deux valables  pour $x$ dans $[7.7\times 10^{9},\, 10^{16}]$. 
\end{proof}

Pour un bon programmeur qui aurait du temps les estimations du corollaire \ref{modeleracine} seraient surpassées pour  $x\ioe 10^{14}$ mais elles illustrent  bien le principe des conversions, ici cela permet de recycler rapidement des calculs fiables. De plus l'intervalle $[10^{14},\, 10^{16}]$ est réservé aux meilleurs programmeurs et machines.
Le dernier résultat sur $m_1$ est de nature différente  car son domaine de validité dépasse celui dont on est parti pour $M$ : $x \ioe 10^{16}$. Bien que la constante devant la racine semble grande, il sera meilleur que nos  estimations asymptotiques pour $m_1$ obtenues par nos autres méthodes  sur l'intervalle $[10^{16},\, 10^{21}]$.

\subsection{\texorpdfstring{Conversion d'une borne $|M(x)/x| \ioe\epsi$ en une borne pour $|m(x)|$}{Conversion d'une borne pour |M(x)/x|  en une borne pour |m(x)|}}

Nous allons convertir l'estimation rappelée dans la proposition  suivante, la même technique fonctionnerait pour toutes les estimations du type  $|M(x)| \ioe \epsi x$.  Rappelons que cette estimation particulière, obtenue par Cohen, Dress et El Marraki, est la meilleure de ce type connue à l'heure actuelle et que le
rang $T$  est optimal par rapport à cet $\epsi$.
\begin{prop}[Cohen, Dress, El Marraki] On  a :
\begin{equation}
\frac{|M(x)|}{x} \ioe \frac{1}{4345}  \quad (x \soe  2\,160\,535)\,. \label{M Cohen Dress El Marraki}
\end{equation}
\end{prop}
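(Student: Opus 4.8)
Le plan est de se borner à rappeler l'inégalité \eqref{M Cohen Dress El Marraki}, qui est démontrée dans \cite{MobiusCohenDressMarraki}, en indiquant le mécanisme : il s'agit d'un raffinement, sans recours à l'analyse complexe, de la méthode de Tchebichef et von Sterneck mentionnée dans l'introduction, poursuivie par MacLeod \cite{Macmobius}, Costa Pereira \cite{CostaPPsiM} et Dress--El Marraki \cite{DressMobiusMarrakiII}. Le point de départ est l'identité de convolution $\sum_{n\ioe x}M(x/n)=1$ valable pour tout $x\soe1$, conséquence immédiate de $\sum_{d\mid m}\mu(d)=[m=1]$, c'est-à-dire de $\mu\ast\mathbf 1=\delta$. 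On la couple à une famille finie de réels $(c_r)_r$ vérifiant $\sum_r c_r/r=0$, ce qui assure que la fonction $E(t):=\sum_r c_r\lfloor t/r\rfloor=-\sum_r c_r\{t/r\}$ est bornée et périodique ; c'est la fonction notée $H_2$ dans la proposition \ref{FDress}, et $H(t)=1-E(t)$ est la fonction, évoquée dans l'introduction, qui approche la constante $1$. La première étape consisterait à établir soigneusement, en substituant $x/r$ à $x$ dans l'identité de départ puis en pondérant par $c_r$, l'identité $\sum_{m\ioe x}M(x/m)\big(\sum_{r\mid m}c_r\big)=\sum_r c_r$ valable dès que $x\soe\max_r r$, ainsi que la variante $\sum_{n\ioe x}\mu(n)\,H(x/n)=M(x)-\sum_r c_r$, en développant $\lfloor x/(nr)\rfloor=\#\{k:knr\ioe x\}$.

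La deuxième étape, qui est le c\oe{}ur de la méthode, transforme une majoration déjà connue du type $|M(y)|\ioe\epsi' y+c$ (par exemple celle de von Sterneck, $|M(y)|\ioe y/9+8$, pour amorcer) en une majoration du même type avec un $\epsi'$ strictement plus petit, procédure que l'on itère. L'idée est que la fonction $E$ est choisie identiquement nulle sur un long segment initial $[1,K]$ — ce qui impose $\sum_{r\mid m}c_r=0$ pour tout $m\ioe K$ et fixe donc une bonne partie des coefficients $c_r$ — et petite en moyenne pondérée au-delà ; une sommation d'Abel montre alors que la constante limite de l'itération n'est autre que $J=(6/\pi^2)\int_1^\infty|E(t)|\,t^{-2}\,\diff t$ (voir \cite{DressMobiusMarrakiII} ainsi que la remarque qui suit la proposition \ref{FDress}), le facteur $6/\pi^2$ provenant de $\sum_{n\ioe y}|\mu(n)|=(6/\pi^2)\,y+O(\sqrt y)$. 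Tout le jeu est donc de fabriquer $(c_r)$ rendant $\int_1^\infty|E(t)|\,t^{-2}\,\diff t$ aussi petite que possible, en gardant $\max_r r$ et $K$ dans des limites raisonnables : la méthode de signes et de tailles de Costa Pereira \cite{CostaPPsiM} fournit la majoration ponctuelle $\|E\|_\infty\ioe22\,527.5$, et Cohen, Dress et El Marraki optimisent la construction de façon à atteindre $\int_1^\infty|E(t)|\,t^{-2}\,\diff t\ioe(\pi^2/6)/4345$, soit exactement les propriétés réunies dans la proposition \ref{FDress}, dont le point (5) ; le facteur $6/\pi^2$ compense alors précisément le facteur $\pi^2/6$ et l'on aboutit à la constante $J\ioe1/4345$.

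Il reste enfin à descendre jusqu'au rang annoncé : la partie asymptotique donne $|M(x)|\ioe x/4345$ pour tout $x$ au-delà d'un seuil explicite $X_0$ (les termes de reste devenant alors négligeables), après quoi une vérification directe sur machine de $|M(n)|\ioe n/4345$ pour les entiers $n\in[2\,160\,535,\,X_0]$ ferme l'intervalle borné ; le rang $2\,160\,535$ est optimal pour $\epsi=1/4345$, un entier légèrement plus petit violant l'inégalité. Le principal obstacle n'est aucune estimation isolée mais, d'une part, l'optimisation combinatoire du vecteur $(c_r)$ — maintenir simultanément sous contrôle $\max_r r$, $K$, $\|E\|_\infty$ et surtout $\int_1^\infty|E(t)|\,t^{-2}\,\diff t$ — et, d'autre part, le calcul numérique, conséquent mais fini, nécessaire pour amorcer la descente ; l'un et l'autre sont menés dans \cite{MobiusCohenDressMarraki}, que je me contenterais donc de citer plutôt que de refaire ici toute l'optimisation.
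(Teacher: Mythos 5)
Votre démonstration est correcte et suit essentiellement la même voie que l'article : la proposition est un résultat importé, et la preuve du papier se réduit à citer le théorème 5bis de \cite[p.~62]{MobiusCohenDressMarraki}, exactement comme vous le faites en dernier ressort. L'esquisse du mécanisme interne (identité de convolution, fonction $H_2$, itération vers la constante $J=(6/\pi^2)\int_1^\infty|E(t)|t^{-2}\,\diff t$) est cohérente avec ce que le papier rappelle dans son introduction et dans la proposition~\ref{FDress}, mais elle n'est pas requise ici.
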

\begin{proof}
Le résultat se trouve dans le théorème 5bis de \cite[p.~62]{MobiusCohenDressMarraki}.
\end{proof}

Dans cette section,  nous obtenons à partir du résultat de Cohen, Dress et El Marraki le résultat suivant. Le rang de validité pour $m$ est le plus petit possible et pour $m_1$ nous n'avons pas les compétences et les moyens informatiques de le faire avec certitude en un temps raisonnable.
\begin{thm}  \label{m petit 4343}
On  a :
\begin{equation*}
|m(x)| \ioe  \frac{1}{4343}  \quad (x \soe 2\,160\,605)\,,
\end{equation*} 
\begin{equation*}
|m_1(x)| \ioe  \frac{1} {11\,470\,909}  \quad (x \soe 2.2 \times 10^{12}).
\end{equation*} 
\end{thm}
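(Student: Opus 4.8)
The plan is to split the range $x\soe 2\,160\,605$ into a moderate part, where the explicit square-root estimates of the preceding subsections already suffice, and a very large part, where the conversion machinery takes over; and to establish the bound on $|m|$ first, then read off the bound on $|m_1|$ from it.

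For $|m(x)|\ioe 1/4343$: on $[2\,160\,605,\,5\times10^{6}[$ this is exactly Lemma \ref{calculs GP}; on $[5\times10^{6},\,7.7\times10^{9}]$ the estimate $x|m(x)|\ioe 0.5\sqrt x$ of Proposition \ref{racine autres} gives $|m(x)|\ioe 0.5/\sqrt x\ioe 0.5/\sqrt{5\times10^{6}}<1/4343$ (this is precisely why the threshold $5\times10^{6}$ was chosen in Lemma \ref{calculs GP}); and on $[7.7\times10^{9},\,10^{16}]$ the estimate $x|m(x)|\ioe 0.701\sqrt x$ of Corollary \ref{modeleracine} is far below $1/4343$. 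It remains to handle $x>10^{16}$, which I would do by a minimal-counterexample argument: if some integer $n\soe 2\,160\,605$ satisfies $|m(n)|>1/4343$, then the least such $n$ exceeds $10^{16}$ by the above, so $\sup_{2\,160\,605<u<n/K}|m(u)|\ioe 1/4343$ since $n/K<n$. Applying Lemma \ref{machinerie H2 CDM} with $\theta=1$, $j=0$, $T=2\,160\,605$ (legitimate, as $n>10^{16}>5\times10^{13}$) together with Proposition \ref{FDress} gives
\[ |m_1(n)|\ioe \tfrac{1}{4343}\int_{1}^{\infty}|H_2(t)|t^{-2}\diff{t}+\tfrac{R_T}{n}\ioe \tfrac{\pi^{2}/6}{4343\cdot4345}+\tfrac{R_T}{n}, \]
where $R_T=22527.5\int_{1}^{2\,160\,605}|m(t)|\diff{t}+6$ is an explicit constant, again bounded via $|m(t)|\ioe 0.5/\sqrt t$. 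Since $n>10^{16}$ the term $R_T/n$ is tiny, so $m=m_1+M/x$ and the Cohen--Dress--El Marraki bound \eqref{M Cohen Dress El Marraki} yield $|m(n)|\ioe \tfrac{1}{4345}+\tfrac{\pi^{2}/6}{4343\cdot4345}+\tfrac{R_T}{n}$, which is $<1/4343$ because $4343+\pi^{2}/6<4345$, with enough slack left to absorb $R_T/n$ once $n>10^{16}$. This contradicts the choice of $n$.

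For $|m_1(x)|\ioe 1/11\,470\,909$, now that $|m(x)|\ioe 1/4343$ holds for every $x\soe 2\,160\,605$: on $[2.2\times10^{12},\,10^{16}]$ the estimate $x|m_1(x)|\ioe 0.129\sqrt x$ of Corollary \ref{modeleracine} gives $|m_1(x)|\ioe 0.129/\sqrt{2.2\times10^{12}}<1/11\,470\,909$ (this is exactly what fixes the threshold $2.2\times10^{12}$), and on $[10^{16},\,10^{21}]$ the estimate $x|m_1(x)|\ioe 5.792\sqrt x$ gives $|m_1(x)|\ioe 5.792\cdot10^{-8}<1/11\,470\,909$. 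For $x>10^{21}$ I would reapply Lemma \ref{machinerie H2 CDM} ($\theta=1$, $j=0$, $T=2\,160\,605$), feeding in $|m(u)|\ioe 1/4343$ for all $u\soe 2\,160\,605$, to obtain $|m_1(x)|\ioe \tfrac{\pi^{2}/6}{4343\cdot4345}+R_T/x$; as $\tfrac{\pi^{2}/6}{4343\cdot4345}<1/11\,470\,909$ and $R_T/x$ is negligible for $x>10^{21}$, this closes the last case.

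The hard part is not the architecture but the numerics, which are razor-thin. One needs $4343+\pi^{2}/6<4345$, which is exactly what pins down the constant $1/4343$ given Cohen--Dress--El Marraki's $1/4345$ and the estimate $\int_{1}^{\infty}|H_2(t)|t^{-2}\diff{t}\ioe (\pi^{2}/6)/4345$ of Proposition \ref{FDress}; one needs the residual $R_T(x)/x$ to stay genuinely below the margin available on each subinterval, which may force taking $T$ larger than $2\,160\,605$ on the borderline ranges (shrinking the supremum at the cost of a somewhat larger $R_T$); and one needs the regimes to overlap, so that $[2\,160\,605,\,10^{16}]$ is covered entirely by direct estimates and the minimal-counterexample step for $x>10^{16}$ has a valid base. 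Note that Lemma \ref{machinerie epsi} applied directly to \eqref{M Cohen Dress El Marraki} with $\theta=1$ would only give $|m_1(x)|$ of order $10^{-5}$, far too weak, which is why the argument is routed through the bound on $m$ rather than through $M$.
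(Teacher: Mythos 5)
Your proposal is correct and relies on the same key tools as the paper --- Lemma \ref{machinerie H2 CDM} with the Cohen--Dress--El Marraki function $H_2$, the square-root models of Proposition \ref{racine autres} and Corollary \ref{modeleracine} to cover $[2\,160\,605,\,10^{16}]$, and the PARI/GP check of Lemma \ref{calculs GP} at the bottom --- but it organizes the argument genuinely differently. The paper first manufactures a weaker seed bound $|m(x)|\ioe 1/3704$ (Lemma \ref{fraction petite 1}, obtained from \eqref{M Cohen Dress El Marraki} through the $G_1$-machinery of Lemma \ref{machinerie epsi}), injects it into Lemma \ref{machinerie H2 CDM} to get a first bound on $|m_1|$ for $x\soe 10^{21}$, lowers the threshold with the $\sqrt{x}$ models, converts back to $|m|$, and only then runs a second iteration to reach $1/11\,470\,909$ and $1/4343$. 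You replace the seed plus the first iteration by a minimal-counterexample argument: since the supremum in Lemma \ref{machinerie H2 CDM} only involves $u<x/K<x$ and $m$ is a step function, the least integer counterexample $n>10^{16}$ to $|m|\ioe 1/4343$ feeds the target bound itself into the lemma, and the contradiction reduces to $4343+\pi^2/6<4345$ together with the smallness of $R_T/n$. The numerics do close: with $T=2\,160\,605$ one gets $R_T\ioe 3.32\times 10^{7}$, hence $R_T/n\ioe 3.32\times 10^{-9}$ against an available margin of about $1.88\times 10^{-8}$; and for the $m_1$ bound at $x>10^{21}$ the margin $1/11\,470\,909-(\pi^2/6)/(4343\times 4345)\approx 6.7\times 10^{-12}$ comfortably absorbs $R_T/x\ioe 3.4\times 10^{-14}$, so no enlargement of $T$ is actually needed. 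Your route is more economical (a single application of the conversion lemma, no need for Lemma \ref{fraction petite 1}, and the two claims come in the natural order, $m$ first and $m_1$ as a corollary); the paper's iterative scheme has the advantage of transposing directly to the $\log^j$-weighted versions (Theorems \ref{prop:majo-m(x)log(x)} and \ref{m1 log carre 119}), where the quantity being bounded varies with $x$ and a clean minimal-counterexample formulation is less immediate. Both arguments converge to the same limiting constant $1/(4345-\zeta(2))\approx 1/4343.36$, which is why $1/4343$ is essentially the best one can extract from $1/4345$ by this method.
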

Auparavant le meilleur résultat de ce type était le suivant :
\begin{equation*}
|m(x)| \ioe \frac{2}{4345}  \quad (x \soe 603\,218)\,.
\end{equation*}
Il figure dans le manuscrit \cite{preprint} d'El Marraki (papier difficilement trouvable) et rapporté dans l'état des lieux de Ramaré \cite[théorème 9, p.~9]{ramareetat}. Mais avec l'identité de Balazard de l'article \cite{BalazardHal} on aurait pu atteindre la valeur du lemme suivant.  On a un facteur multiplicatif d'environ $1.1731$ par rapport à l'estimation de départ et le mieux que l'on puisse faire avec le lemme \ref{machinerie epsi} est $1+3/4-\gamma$ soit environ $1.1728$.
\begin{lem} \label{fraction petite 1}
On a la majoration suivante :
\begin{equation*}
|m(x)| \ioe  \frac{1}{3704}  \quad ( x \soe 3.5 \times 10^6  )\,.
\end{equation*}
\end{lem}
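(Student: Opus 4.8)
The goal is to produce the bound $|m(x)| \le 1/3704$ for $x \ge 3.5\times 10^6$, starting from the Cohen--Dress--El Marraki estimate $|M(x)|/x \le 1/4345$ valid for $x \ge 2\,160\,535$. The natural route is to apply Lemma~\ref{machinerie epsi} with $\theta = 1$ and $j = 0$, since in that regime the multiplicative constant is governed by $\int_1^\infty G_1(t) t^{-1}\diff t = 3/4 - \gamma$ from \eqref{Muntz G1}. First I would write out \eqref{boite G1} in this case: for $1 < T \le x$,
\begin{equation*}
|m_1(x)| \le \sup_{T < u < x}\frac{|M(u)|}{u} \times \Bigl(\tfrac34 - \gamma\Bigr) + \frac{R_T(x)}{x}, \qquad R_T(x) = \frac83 + \frac1x \int_1^T |M(t)|\diff t.
\end{equation*}
The idea is to choose $T = 2\,160\,535$, the exact rang of validity of \eqref{M Cohen Dress El Marraki}, so that $\sup_{T < u < x} |M(u)|/u \le 1/4345$ for every $x \ge T$, and then absorb the error term $R_T(x)/x$ by taking $x$ large enough — this is what fixes the constant $3.5\times 10^6$.

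Second, I would handle the tail term $\frac1x\int_1^T |M(t)|\diff t$. Since $M$ is a step function, this integral is the finite sum $\sum_{n=1}^{T-1} |M(n)|$, which one bounds crudely via the classical explicit estimate $|M(t)| \le t/9 + 8$ (von Sterneck) or even more cheaply via $|M(t)| \le t$ for the small range and $|M(t)| \le t/4345$ on the Cohen--Dress--El Marraki range; in any case $\int_1^T |M(t)|\diff t \le T^2/(2\cdot 4345) + O(T)$, so $\frac1x \int_1^T|M(t)|\diff t \le \frac{T^2}{2\cdot 4345\, x}(1+o(1))$. Plugging $T = 2\,160\,535$ and $x = 3.5\times 10^6$ gives a numerical value for $R_T(x)/x$; combined with $(3/4-\gamma)/4345 \approx 0.1727/4345$ this yields an upper bound for $|m_1(x)|$, and then $|m(x)| \le |m_1(x)| + |M(x)/x| \le |m_1(x)| + 1/4345$. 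The whole right-hand side, multiplied by $\sqrt x$ — no, rather by nothing, just as a function of $x$ — is decreasing, so it suffices to check the inequality at $x = 3.5\times 10^6$ and conclude it holds for all larger $x$; the arithmetic should come out just under $1/3704$.

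Third, a subtlety: to evaluate $\sum_{n=1}^{T-1}|M(n)|$ with a genuinely small constant one does \emph{not} want to actually compute $2.16\times 10^6$ values of $M$. Instead I would split $[1,T]$ at $2\,160\,535$ (or at some smaller threshold like $10^{16}$ is irrelevant here) and use the \emph{integrated} form of the known bounds: on $[1, 2\,160\,535]$ one has $|M(t)| \le t/4345$ except possibly on a short initial segment where a trivial bound suffices, so $\int_1^T |M(t)|\diff t \le \frac{1}{2\cdot 4345}T^2 + C T$ with $C$ a small absolute constant. This keeps $R_T(x)/x$ of size roughly $T^2/(2\cdot 4345\, x) \approx (2.16\times 10^6)^2/(2 \cdot 4345 \cdot 3.5\times 10^6) \approx 0.15$, which is comfortably smaller than what is needed once divided appropriately — one should double-check this is compatible with the target $1/3704$ versus $1/4345$, i.e. that the combined slack $\approx 0.1727/4345 + (\text{tail})/x$ does not exceed $1/3704 - 1/4345$.

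**Main obstacle.** The only real difficulty is bookkeeping the constants so that the final bound lands \emph{below} $1/3704$ rather than merely below $1/4000$ or so: the multiplicative factor $1 + (3/4-\gamma) \approx 1.1728$ is essentially forced by Lemma~\ref{machinerie epsi} (as the text itself remarks), so $1/4345$ becomes $\approx 1.1728/4345 \approx 1/3704.4$ in the limit, and the error term $R_T(x)/x$ must be shown to be small enough at $x = 3.5\times 10^6$ to not spoil this — that is, one needs $R_T(3.5\times10^6)/(3.5\times 10^6)$ together with the rounding in $1.1728$ to stay under the razor-thin gap $1/3704 - 1.1728/4345$. This forces a reasonably careful (but still elementary) estimate of $\int_1^{2\,160\,535}|M(t)|\diff t$ via previously known explicit bounds on $M$, rather than a crude one; the choice of the cutoff $3.5\times 10^6$ is precisely calibrated to make this work.
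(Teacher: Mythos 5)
Your overall framework (Lemma~\ref{machinerie epsi} with $\theta=1$, $j=0$, the constant $3/4-\gamma$, and $T=2\,160\,535$) is the same as the paper's, but your closing step --- ``check the inequality at $x=3.5\times 10^6$ and conclude by monotonicity'' --- fails numerically, and by a wide margin. The available slack is
\[
\frac{1}{3704}-\frac{1+(3/4-\gamma)}{4345}\;\approx\;6\times 10^{-8},
\]
whereas at $x=3.5\times 10^6$ the remainder $R_T(x)/x=\frac{8/3}{x}+\frac{1}{x^2}\int_1^T|M(t)|\diff{t}$ already contains $8/(3x)\approx 7.6\times 10^{-7}$, and the integral term, even estimated via $|M(t)|\ioe t/4345$, contributes about $T^2/(2\cdot 4345\,x^2)\approx 4.4\times 10^{-5}$. (Your intermediate figure ``$\approx 0.15$'' is off: $T^2/(2\cdot 4345\cdot 3.5\times 10^6)\approx 153$, and after the second division by $x$ one gets $4.4\times 10^{-5}$ --- still some three orders of magnitude above the slack.) The cutoff $3.5\times 10^6$ is not calibrated to absorb $R_T(x)/x$; it cannot be, since the supremum forces $T\soe 2\,160\,535$ and no such $T$ makes the remainder small when $x$ is only of order $10^6$.

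The missing idea is the paper's two-stage descent. One first restricts to $x\soe 10^{16}$, where the trivial bound $|M(t)|\ioe t$ gives $(T/x)^2/2+3/x<4\times 10^{-16}$, hence $|m_1(x)|\ioe (3/4-\gamma)/4345+4\times10^{-16}\ioe 1/25146$ and $|m(x)|\ioe 1/4345+1/25146<1/3704$ on that range. The interval $3.5\times 10^6\ioe x\ioe 10^{16}$ is then covered not by the conversion lemma at all but by the precomputed square-root bounds: $|m(x)|\ioe 0.701/\sqrt{x}$ on $[7.7\times 10^9,\,10^{16}]$ (Corollaire~\ref{modeleracine}) and $|m(x)|\ioe 0.5/\sqrt{x}$ on $[3,\,7.7\times 10^9]$ (inégalité~\eqref{racinemHelfgott}), the latter yielding $|m(x)|\ioe 1/3704$ as soon as $x\soe(3704\times 0.5)^2\approx 3.43\times 10^6$ --- this is what actually fixes the constant $3.5\times 10^6$. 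Without this bridging step your argument does not close.
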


\begin{proof} 
Nous allons dans un premier temps nous occuper uniquement des valeurs  $x \soe 10^{16}$.  Par le lemme  \ref{machinerie epsi} appliqué avec $\theta=1$ et $j=0$ on a 
\begin{equation}
|m_1(x)| \ioe \sup_{T < u < x} \frac{|M(u)|}{u} (3/4-\gamma) +  \frac{1}{x^{2} } \int_{1}^{T} |M(t)|   \diff{t}+ \frac{8/3}{x}\,. \label{eq:majo-m1-R}
\end{equation}
Posons  $T=2\,160\,535$. Par la majoration triviale $|M(t)|\ioe t$ pour tout $t\soe 1$,   on a
\[
\frac{1}{x^{2} } \int_{1}^{T} |M(t)|   \diff{t}+ \frac{8/3}{x} \ioe \frac{(T/x)^2}{2}+\frac{3}{x} \underset{ \text{ car } x \soe 10^{16} }{<} 4 \times 10^{-16}\;.
 \]
Par l'estimation \eqref{M Cohen Dress El Marraki} qui est  $|M(u)|/u \ioe 1/4345$ pour $u \soe T=2\,160\,535$ injectée dans la ligne  \eqref{eq:majo-m1-R} on obtient 
\begin{equation*} 
|m_1(x)| \ioe  \frac{1}{4345}\times (3/4-\gamma)  +4 \times 10^{-16}  \ioe \frac{1}{25146} \quad (x \soe 10^{16})\,.
\end{equation*}
On en déduit l'estimation pour $m$ suivante
\begin{equation}
|m(x)| \ioe \frac{|M(x)|}{x}+|m_1(x)| \ioe \frac{1}{4345}+ \frac{1}{25146} < \frac{1}{3704} \quad (x \soe 10^{16})\,. \label{premiereetape} 
\end{equation}
 Pour les petites valeurs de $x$ les comparaisons par rapport à $\sqrt{x}$ avec le modèle \eqref{racinemHelfgott} et ceux du corollaire \ref{modeleracine}  sont aisées et terminent le raccordement à  $ x \soe 3.5 \times 10^6>(3074 \times 0.5)^2$.
\end{proof}

\begin{proof}[Démonstration du théorème \ref{m petit 4343}]
Posons $T=4.8 \times 10^6$ (ce choix est expliqué plus loin) et considérons des $x \soe 5 \times 10^{13}$.  Par le lemme  \ref{machinerie H2 CDM} appliqué avec $\theta=1$ et $j=0$ on a 
\begin{equation}
|m_1(x)| \ioe \sup_{T < u < x} |m(u)|  \times \frac{\pi^2}{6} \frac{1}{4345}   +  \frac{R_T}{x}  \; \text{ où } \;   R_T=22527.5   \int_{1}^{T} |m(t)|  \diff{t} +6\label{gaindouble}\;.
\end{equation}
 Puisque $T \ioe 7.7 \times 10^9$ l'inégalité~\eqref{racinemHelfgott} donne  $|m(t)| \ioe 0.5/\sqrt{t}$ pour tout $t$ vérifiant $3 \ioe t \ioe T $ et donc
\begin{equation}
R_T \ioe 22527.5 \times \big( 1.5+\int_{3}^{T}  \frac{0.5}{\sqrt{t}}  \diff{t} \big)+6 \ioe 4.94\times10^7 \;. \label{maj reste m} 
\end{equation}
Par le lemme \ref{fraction petite 1} on a $\sup_{T < u < x}|m(u)|  \ioe 1/3704$, en regroupant avec l'inégalité \eqref{maj reste m} dans la ligne \eqref{gaindouble} on obtient l'estimation 
\begin{equation*}
|m_1(x)| \ioe  1.231557948\times10^{-7}+\frac{4.94\times10^7}{x}  \quad (x \soe T) 
\end{equation*}
et par décroissance de la fonction majorante on arrive à 
\begin{equation}
|m_1(x)| \ioe 1/8\,119\,793  \quad (x \soe 10^{21})\,. \label{epsi I avant modele}
\end{equation}
Pour les petites valeurs de $x$ nous allons cette fois détailler les étapes. Utilisons le corollaire~\ref{modeleracine}, on a $|m_1(x)| \ioe 5.792 /\sqrt{x}$ pour $x$ entre $10^{16}$ et $10^{21}$ ce qui redescend la borne inférieure de l'estimation \eqref{epsi I avant modele} à $10^{16}$. Par le même corollaire on a aussi
\begin{equation}
|m_1(x)| \ioe \frac{0.129}{\sqrt{x}} \quad (33 \ioe x \ioe 10^{16})\,, \label{rappel racine}
\end{equation}
et $(0.129 \times 8\,119\,793)^2 <1.1 \times 10^{12}$ ramène la validité de l'inégalité de l'estimation \eqref{epsi I avant modele} à $x \soe 1.1 \times 10^{12}.$ Pour les mêmes $x$ on obtient $|m(x)| \ioe 1/4345+1/8\,119\,793<1/4342.67$ et par le corollaire~\ref{modeleracine} puis l'inégalité \eqref{racinemHelfgott} on a
\begin{equation}
|m(x)| \ioe \frac{0.701}{\sqrt{x}} \quad (3 \ioe x \ioe 10^{16})\, \quad \text{ et } \quad |m(x)| \ioe \frac{0.5}{\sqrt{x}} \quad (3 \ioe x \ioe 7.7 \times 10^9) \;, \label{rappel racine m}
\end{equation} 
le fait que $(0.701 \times 4342.67)^2<10^7$ puis $(0.5\times 4342.67)^2<4.8 \times 10^6=T$ donnent
\[
|m(x)| \ioe \frac{1}{4342.67}  \quad (x \soe 4.8 \times 10^6) \,.
\]
Puis on recommence depuis la ligne \eqref{maj reste m} avec cette nouvelle estimation de $|m|$, on a
\begin{equation}
|m_1(x)| \ioe  \dfrac{\pi^2}{6} \frac{1}{4345}  \frac{1}{4342.67}+\frac{4.94\times10^7}{x} \quad (x \soe 4.8 \times 10^6)  \label{gros gain}
\end{equation}
et donc 
\[
|m_1(x)| \ioe   \frac{1} {11\,470\,909} \quad (x \soe 10^{21})  
\]
ramenée, en suivant la même procédure que ci-dessus, à $ x \ioe 10^{16}$ puis à $x \soe (0.129 \times   11\,470\,909)^2 \soe 2.2 \times 10^{12}$ par la comparaison avec l'inégalité \eqref{rappel racine}.

On en déduit que $|m(x)| <1/4343$ pour $x>2.2 \times 10^{12}$ puis  pour $x \soe 5 \times 10^6 $ par  les comparaisons en racine \eqref{rappel racine m} et 
le fait que $(0.701 \times 4343)^2<10^7$ et $(0.5 \times 4343)^2< 4.72 \times 10^6$. Les calculs PARI/GP du lemme \ref{calculs GP} permettent de finir le raccordement entre $5 \times 10^6$ et $2\,160\,605$. 
\end{proof}

\begin{rem} \label{limite}
Les rangs sont  très importants dans nos conversions car l'implication qui semble se profiler entre l'estimation \eqref{M Cohen Dress El Marraki} de départ sur $M$ et l'inégalité \eqref{gros gain} pour $\epsi = 1/4345$ donnerait pour tout $\epsi >0$  :  
\[
\frac{|M(x)|}{x} \ioe \epsi  \quad (x\soe T_{\epsi}) \quad  \Longrightarrow \quad   |m_1(x)| \ioe \zeta(2) \epsi^2+O(\epsi^3) \quad (x\soe O(T_{\epsi}))  \,.
\]
Ce qui est en contradiction avec $\Theta=\sup_{\zeta(s)=0}\Re s <1$.

 D'ailleurs imaginons que ce supremum soit strictement inférieur à $1$. Suivant nos méthodes si l'on devait par exemple convertir $|M(x)| \ioe x^{9/10}$ pour tout $x\soe 1$ il faudrait utiliser les lemmes \ref{machinerie epsi} et \ref{machinerie H2 CDM} avec $\theta=9/10$ et ainsi obtenir :
\[
\frac{|M(x)|}{x} \ioe  \frac{1}{x^{1/10}} \quad (x\soe 1) \quad   \Longrightarrow \quad  |m_1(x)| \ioe  \frac{0.004 }{x^{1/10}} \quad (x\soe 5 \times 10^{13}) \,.
\]
Notons  que la formule sommatoire d'Abel donne ici le bon ordre de grandeur en $x$ mais avec un facteur multiplicatif  $2500$ fois plus grand que ci-dessus. En effet, on a 
\[
 m_1(x)=\int_{1}^{x} \frac{M(t)}{t^2}=\int_{1}^{\infty} \frac{M(t)}{t^2}-\int_{x}^{\infty} \frac{M(t)}{t^2}=-\int_{x}^{\infty} \frac{M(t)}{t^2} \quad \text{et} \quad \int_{x}^{\infty} \frac{t^{9/10}}{t^2} \diff{t}=\frac{10}{x^{1/10}} \,.
\]
\end{rem}

\subsection{\texorpdfstring{Conversion du théorème des nombres premiers $|M(x)/x| \ioe a/\log^j x$}{Conversion du théorème des nombres premiers |M(x)/x| < a/(log  x) puissance j}}
\subsubsection{\texorpdfstring{En partant de $|M(x)/x| \ioe a/\log x$}{En partant de |M(x)/x| < a/\log x}}

Commençons par utiliser une estimation  explicite de $M(x)=o(x)$ de Ramaré. C'est ce type d'inégalités que l'on souhaite convertir dans cette partie et
c'est la meilleure estimation de ce type obtenue à l'heure actuelle.
\begin{prop}[Ramaré] \label{ramarlog}
On a : 
\begin{equation}
\frac{|M(x)|}{x} \ioe \frac{0.013}{\log x} \quad(x \soe 97\,067)\,.
\label{eq:ramarelog}
\end{equation}
\end{prop}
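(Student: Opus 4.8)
La proposition \ref{ramarlog} n'est pas un énoncé que nous démontrons : c'est l'estimation explicite de $M(x)=o(x)$ due à Ramaré, qui sert de brique de départ dans cet article (elle alimente le lemme \ref{machinerie epsi}, et non l'inverse). Le plan est donc simplement de la citer : l'inégalité \eqref{eq:ramarelog}, avec ce rang précis $97\,067$, est établie dans \cite{from}. Nous nous bornons ci-dessous à en rappeler le mécanisme, car il explique pourquoi c'est cette estimation-là que nous convertissons dans cette sous-section.

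D'abord, comme indiqué dans l'introduction, on part d'une identité de convolution reliant $\mu$ et $\Lambda$, par exemple $\mu\log = -\mu\star\Lambda$, qui donne après sommation $\sum_{n\ioe x}\mu(n)\log n = -\sum_{d\ioe x}\mu(d)\psi(x/d)$ ; on relie alors le membre de gauche à $M(x)\log x$ par sommation d'Abel, et le membre de droite à $x\,m(x)$ en écrivant $\psi(x/d)=x/d+(\psi(x/d)-x/d)$. Ensuite on injecte une majoration explicite de $|\psi(y)-y|$ -- issue d'une région sans zéro effective de $\zeta$ et du calcul numérique des premiers zéros -- ainsi que les majorations déjà connues de $M$ et de $m$ sur un intervalle borné, puis on optimise les paramètres de découpage pour faire sortir la constante $0.013$ et abaisser le rang autant que possible. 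Les détails de cette optimisation sont dans \cite{from}.

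Le point délicat -- que, précisément, notre travail permet ensuite de ne pas reproduire du côté de $m$ -- est double : il faut disposer d'une version explicite de $\psi(x)-x$ assez fine pour que la constante $0.013$ subsiste après toutes les pertes du transfert, et raccorder proprement la zone asymptotique à une vérification numérique de $M$ jusqu'à un seuil maîtrisable. C'est ce travail lourd, déjà mené par Ramaré pour $M$, que le lemme \ref{machinerie epsi} et le théorème \ref{thm1} recyclent pour obtenir les estimations analogues sur $m$ du théorème \ref{thm A}.
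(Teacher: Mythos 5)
Your approach matches the paper's: Proposition \ref{ramarlog} is not proved there either, it is simply imported by citation from Ramaré (the paper points precisely to the lemme 16.1, p.~1384 of \cite{RamarExplicitMob}, whereas you cite \cite{from}; both are Ramaré's work on explicit Möbius estimates and the introduction itself attributes the inequality to \cite{from}, so this is harmless). Your added sketch of the underlying mechanism (convolution identity, explicit bound on $\psi(y)-y$, numerical verification up to a threshold) is a reasonable high-level description and does not alter the fact that the statement is an external result taken as given.
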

\begin{proof}
Ce résultat est contenu dans le lemme 16.1 de \cite[p.~1384]{RamarExplicitMob}.
\end{proof}
La conversion sera plus dure que pour $|M(x)/x| \ioe\epsi$ car le terme constant $\epsi$ devient variable et que nos choix de rangs sont bornés (sinon le problème serait le même). Par exemple montrer un résultat isolé comme $|m_1(x)| \ioe 4 \times 10^{-8}$ pour tout $x\soe 10^{50}$ ne nous intéresse pas ici car nous ne pourrions pas redescendre la borne de validité avec les estimations sur $m_1$ dont nous disposons. 

  Remarquons  que l'estimation $|M(x)/x|\ioe 1/4345$ est plus précise que \eqref{eq:ramarelog}  jusqu'à des $x$ de l'ordre de $3 \times 10^{24}$.   Le meilleur résultat explicite de  $m(x)\ll 1/\log x $ qui était disponible avant nos travaux est dans \cite{RamarExplicitMob} et obtenu à partir de \eqref{eq:ramarelog}, on a 
\[
|m(x)| \ioe \frac{0.0144}{\log x} \quad(x\soe 96\,955)\;.
\]
Nous allons obtenir les majorations suivantes. Le rang de validité pour $m$ est le plus petit possible et pour $m_1$ nous n'avons pas les compétences et les moyens informatiques de le faire avec certitude en un temps raisonnable.
\begin{thm}\label{prop:majo-m(x)log(x)}
On a :
\begin{equation*} 
|m(x)| \ioe  \frac{0.0130073}{\log x}  \quad (x \soe 97\, 063)
\end{equation*} 
et
\begin{equation*} 
|m_1(x)| \ioe  \frac{7.265\times 10^{-6}}{\log x}  \quad (x \soe 2.15 \times 10^{11})\,. \label{propo 1 sur log }
\end{equation*}
\end{thm}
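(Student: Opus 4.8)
Le plan est de reprendre, avec $j=1$ au lieu de $j=0$, le schéma itératif employé pour le théorème~\ref{m petit 4343}, en partant de l'estimation $|M(x)/x|\ioe 0.013/\log x$ de Ramaré (proposition~\ref{ramarlog}). D'abord j'injecterais la proposition~\ref{ramarlog} dans le lemme~\ref{machinerie epsi} avec $\theta=1$ et $j=1$ : l'intégrale qui intervient est $\int_1^\infty G_1(t)\,t^{-1+1/\log T}\diff t$, qui vaut d'après \eqref{Muntz G1} $\frac1{s-1}-\frac{8\zeta(s)}{(s+1)(s+3)}$ avec $s=1-1/\log T$ (et $s\neq 1$, $\Re s>-1$), quantité qui décroît vers $\tfrac34-\gamma$ quand $T\to\infty$ ; le reste $R_T(x)/x$ se contrôle par $|M(t)|\ioe t$ et par $|M(t)|\ioe 0.5\sqrt t$ (proposition~\ref{racine autres}). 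On obtient ainsi une première estimation $|m_1(x)|\ioe c_1/\log x$ valable pour $x$ assez grand, que je ramènerais vers $7.7\times 10^9$ grâce aux comparaisons en racine du corollaire~\ref{modeleracine} ; en ajoutant $|M(x)/x|$ on a alors $|m(x)|\ioe (0.013+c_1)/\log x$ pour $x\soe 97\,067$.

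Vient ensuite l'amélioration par itérations passant par $m$, via le lemme~\ref{machinerie H2 CDM} avec $\theta=1$ et $j=1$ : le facteur devant $1/\log x$ est $\int_1^\infty |H_2(t)|\,t^{-2+1/\log T}\diff t$, majoré par \eqref{lemme integre} avec $\delta=1/\log T$, soit $\frac1{1-\delta}\bigl(\tfrac{6/\pi^2\times 4345\times 22527.5}{\delta}\bigr)^{\delta}\cdot\frac{\pi^2}{6}\cdot\frac1{4345}$, qui tend vers $\frac{\pi^2}{6\cdot 4345}$ quand $T\to\infty$ ; le reste vaut $R_T/x=(22527.5\int_1^T|m(t)|\diff t+6)/x$, où $\int_1^T|m(t)|\diff t$ se majore par $0.5/\sqrt t$ et $0.701/\sqrt t$ (proposition~\ref{racine autres}, corollaire~\ref{modeleracine}) sur les petites valeurs et par l'estimation courante en $1/\log$ de $m$ sur les grandes. À chaque passage on prend pour $\sup_{T<u<x/K}|m(u)|\log u$ la meilleure constante déjà connue (tendant vers $0.013$), ce qui rapproche le facteur de $0.013\times\frac{\pi^2}{6\cdot 4345}$ ; en itérant quelques fois, et en ramenant chaque fois le domaine de validité vers le bas par les comparaisons en racine du corollaire~\ref{modeleracine} (qui donnent aussi $0.129/\sqrt x\ioe 7.265\times 10^{-6}/\log x$ exactement pour $x\soe 2.15\times 10^{11}$ et $5.792/\sqrt x\ioe 7.265\times 10^{-6}/\log x$ pour $x\soe 10^{16}$), on atteint l'estimation $|m_1(x)|\ioe 7.265\times 10^{-6}/\log x$.

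Enfin, pour $m$ je combinerais $|m(x)|\ioe |M(x)/x|+|m_1(x)|\ioe 0.013/\log x+7.265\times 10^{-6}/\log x<0.0130073/\log x$ sur le domaine où s'appliquent à la fois l'estimation de Ramaré et celle sur $m_1$, puis je comblerais l'intervalle $[97\,063,\,10^{16}]$ par la vérification directe du lemme~\ref{calculs GP} sur $[97\,063,\,230\,000[$ et par les inégalités $|m(x)|\ioe 0.5/\sqrt x$ et $|m(x)|\ioe 0.701/\sqrt x$ de la proposition~\ref{racine autres} et du corollaire~\ref{modeleracine} sur $[230\,000,\,10^{16}]$.

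Le principal obstacle est le choix de la coupure $T$ dans l'application décisive du lemme~\ref{machinerie H2 CDM} : le facteur parasite $(\,\cdot\,/\delta)^{\delta}$ avec $\delta=1/\log T$ dans \eqref{lemme integre} ne devient proche de $1$ que pour $T$ très grand, tandis que le reste $R_T/x$ ne reste négligeable que pour $x$ grand devant $T$ ; ainsi la constante visée $7.265\times 10^{-6}$ (environ $1.48$ fois la valeur limite $0.013\times\tfrac{\pi^2}{6\cdot 4345}$) et le rang visé $x\soe 2.15\times 10^{11}$ ne s'obtiennent qu'au prix de plusieurs passages, chacun sur un intervalle décalé et exploitant que la conversion coûteuse n'est nécessaire qu'au-delà de la portée des comparaisons en racine du corollaire~\ref{modeleracine}.
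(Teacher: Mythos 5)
Votre stratégie globale est bien celle du papier : initialisation par le lemme \ref{machinerie epsi} avec $\theta=1$, $j=1$ (c'est le lemme \ref{etape m log}, qui donne $|m_1(x)|\ioe 0.0023/\log x$ pour $x\soe 5\times 10^6$), puis itération par le lemme \ref{machinerie H2 CDM} avec $\delta=1/\log T$ et $T$ très grand ($T=8.2\times 10^{25}$ dans le papier, d'où le facteur $1/1796.57$ dans \eqref{lemme integre}), et enfin la sommation $|m(x)|\ioe |M(x)/x|+|m_1(x)|$ suivie du raccordement par les comparaisons en racine et par le lemme \ref{calculs GP}. Les constantes que vous identifiez (limite $0.013\times\zeta(2)/4345$, rapport d'environ $1.48$, croisement $0.129/\sqrt{x}=7.265\times 10^{-6}/\log x$ vers $2.15\times 10^{11}$) sont les bonnes.

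Il y a cependant une lacune réelle dans votre mécanisme de descente. Avec $T$ de l'ordre de $10^{25}$ le reste vaut $R_T\ioe 4.254\times 10^{26}$, si bien que la sortie brute de l'itération n'est valable que pour $x\soe 2.5\times 10^{42}$ au premier passage, puis $x\soe 1.5\times 10^{36}$ au second ; or les comparaisons en racine du corollaire \ref{modeleracine} s'arrêtent à $x=10^{21}$ (et la majoration $5.792/\sqrt{x}$ n'est utile qu'au-dessus d'environ $7\times 10^{14}$). L'intervalle $[10^{21},\,10^{36}]$ reste donc découvert par les seuls outils que vous invoquez, et l'on ne peut pas non plus réduire $T$ pour abaisser le seuil sous $10^{21}$ sans perdre la constante visée : avec $T=10^{20}$ le facteur de \eqref{lemme integre} remonte déjà vers $1/1613$ et ne donne plus que $8\times 10^{-6}$ environ. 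Le papier comble ce trou avec la borne \emph{constante} du théorème \ref{m petit 4343}, soit $|m_1(x)|\ioe 1/11\,470\,909$ pour $x\soe 2.2\times 10^{12}$, en remarquant que $1/11\,470\,909\ioe 7.265\times 10^{-6}/\log x$ précisément tant que $x\ioe\exp(83.3)\approx 1.55\times 10^{36}$ (et de même $1/11\,470\,909\ioe 8.517\times 10^{-6}/\log x$ pour $x\ioe 2.6\times 10^{42}$ au premier passage) : c'est cette comparaison, et non les modèles en racine, qui effectue l'essentiel de la descente, la racine ne servant qu'à passer de $2.2\times 10^{12}$ à $2.15\times 10^{11}$. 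Votre paragraphe sur \og{}l'obstacle principal\fg{} situe d'ailleurs mal le point de jonction : la conversion coûteuse ne prend le relais que très au-delà de la portée des comparaisons en racine, pas à leur frontière.
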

Commençons par initialiser les diminutions sur $|m_1|$, d'ailleurs on ne peut pas faire beaucoup mieux en partant de l'identité de Balazard \eqref{bal2} à travers le lemme \ref{machinerie epsi} et de l'estimation de Ramaré \eqref{eq:ramarelog}.
\begin{lem} \label{etape m log} On a la majoration suivante :
\begin{equation*}
|m_1(x)| \ioe   \frac{0.0023}{\log x} \quad ( x \soe 5\times 10^{6})\,.  
\end{equation*}
\end{lem}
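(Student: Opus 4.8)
Le plan est d'appliquer le lemme \ref{machinerie epsi} avec les paramètres $\theta=1$ et $j=1$, en partant de l'estimation de Ramaré \eqref{eq:ramarelog}. Avec ces paramètres, le lemme \ref{machinerie epsi} fournit, pour $1<T\ioe x$,
\[
|m_1(x)| \ioe \sup_{T<u<x}\bigl(u^{-1}|M(u)|\log u\bigr)\times \int_1^\infty G_1(t)\,t^{-1+\frac{1}{\log T}}\diff{t}\times \frac{1}{\log x} + \frac{R_T(x)}{x}
\]
avec $R_T(x)=8/3+(1/x)\int_1^T|M(t)|\diff{t}$. Il faut donc trois ingrédients : (i) une majoration du supremum $\sup_{u>T}u^{-1}|M(u)|\log u$, fournie directement par la proposition \ref{ramarlog} dès que $T\soe 97\,067$, qui donne la valeur $0.013$ ; (ii) une majoration de l'intégrale de Müntz $\int_1^\infty G_1(t)\,t^{-1+\frac{1}{\log T}}\diff{t}$, obtenue en posant $s=1-\frac{1}{\log T}$ dans la première formule de \eqref{Muntz G1}, soit $\frac{1}{s-1}-\frac{8\zeta(s)}{(s+1)(s+3)}=-\log T-\frac{8\zeta(1-1/\log T)}{(2-1/\log T)(4-1/\log T)}$, quantité qu'il faut contrôler numériquement pour le $T$ choisi en utilisant $\zeta(s)\approx \frac{1}{s-1}+\gamma$ près de $s=1$ (attention au pôle : les deux termes $-\log T$ et $-8\zeta(s)/((s+1)(s+3))$ divergent mais leur somme reste bornée, comme le montre déjà la deuxième formule de \eqref{Muntz G1} pour $s=1$) ; (iii) le contrôle du reste $R_T(x)/x$, pour lequel on majore $\int_1^T|M(t)|\diff{t}$ soit par la borne triviale $|M(t)|\ioe t$ (donnant $T^2/2$), soit plus finement par $\int_1^T 0.5\sqrt t\,\diff t$ via la proposition \ref{racine autres} puisque $T\ioe 7.7\times 10^9$.

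Concrètement, je choisirais $T$ de l'ordre de $10^5$ ou un peu plus (typiquement $T=97\,067$, le rang imposé par la proposition \ref{ramarlog}, ou une valeur légèrement supérieure pour assouplir le reste) ; avec un tel $T$ et $x\soe 5\times 10^6$, le terme $R_T(x)/x$ multiplié par $\log x$ est de l'ordre de $(T^2/2)\log x/x$, qui est petit devant $10^{-4}$ pour $x\soe 5\times 10^6$ dès que $T$ n'est pas trop grand — il faudra vérifier que $T\ioe 10^5$ suffit, sinon utiliser la majoration en $\sqrt t$ de l'intégrale. Le produit des trois facteurs donne alors $|m_1(x)|\log x \ioe 0.013\times(\text{intégrale})+o(1)$, et il s'agit de vérifier que ce produit est $\ioe 0.0023$. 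La valeur $0.013\times 0.1727\approx 0.00225$ (en utilisant que l'intégrale vaut $\frac34-\gamma\approx 0.1727$ pour $T=x$, donc un peu moins pour $T$ fini puisque l'intégrande décroît quand $1/\log T$ décroît — mais ici $T<x$ donc $1/\log T>0$ et l'intégrale est plus grande : il faut être prudent sur le sens de variation) montre que la marge est étroite, donc le choix de $T$ devra être ajusté pour que l'intégrale de Müntz reste suffisamment proche de $\frac34-\gamma$ tout en gardant le reste négligeable.

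Le point le plus délicat sera donc l'estimation numérique de $\int_1^\infty G_1(t)\,t^{-1+1/\log T}\diff{t}$ : il faut d'une part gérer la quasi-annulation entre le terme polaire $\frac{1}{s-1}=-\log T$ et la partie singulière de $8\zeta(s)/((s+1)(s+3))$, d'autre part vérifier que pour le $T$ retenu cette intégrale reste assez petite (disons $\ioe 0.175$ environ) pour que $0.013$ fois cette valeur plus le reste soit $\ioe 0.0023$. Une fois cette estimation établie pour $x$ dans un domaine du type $x\soe T$, le raccordement jusqu'à $x\soe 5\times 10^6$ se fait, comme dans les démonstrations précédentes (voir le corollaire \ref{modeleracine} et le lemme \ref{fraction petite 1}), par comparaison avec les estimations en $\sqrt x$ : on a $|m_1(x)|\ioe 0.114/\sqrt x$ pour $5\times 10^6\ioe x\ioe 7.7\times 10^9$ par le corollaire \ref{modeleracine}, et $0.114\log x/\sqrt x \ioe 0.0023$ pour $x$ assez grand, ce qui permet de descendre la borne de validité jusqu'à $5\times 10^6$ (ou éventuellement d'utiliser une vérification directe sur un intervalle borné résiduel si nécessaire).
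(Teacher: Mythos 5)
Votre plan suit exactement la même route que l'article (lemme \ref{machinerie epsi} avec $\theta=1$, $j=1$, estimation de Ramaré pour le supremum, formule de Müntz pour l'intégrale, raccordement par les modèles en $\sqrt{x}$), mais la calibration numérique que vous proposez ne ferme pas. Avec $T=97\,067$ on a $s=1-1/\log T\simeq 0.9129$ et la formule \eqref{Muntz G1} donne $\int_1^\infty G_1(t)\,t^{-1+1/\log T}\diff{t}=\frac{1}{s-1}-\frac{8\zeta(s)}{(s+1)(s+3)}\simeq 0.180$ (l'intégrale croît quand $T$ décroît, comme vous le soupçonnez), d'où $0.013\times 0.180\simeq 0.00234>0.0023$ : le choix $T\sim 10^5$ échoue, et aucun terme de reste n'est là pour compenser. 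Vous signalez que « le choix de $T$ devra être ajusté », mais vous ne résolvez pas la tension, et la résolution va dans le sens opposé à celui que vous privilégiez : il faut prendre $T$ \emph{grand}, pas petit.

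L'article prend $T=10^{13}$, ce qui donne $s\simeq 0.9666$ et une intégrale $\ioe 0.1755$, donc $0.013\times 0.1755\simeq 0.0022815<0.0023$. Le prix à payer est que le reste $\frac{1}{x^2}\int_1^{T}|M(t)|\diff{t}$, majoré via $|M(t)|\ioe\sqrt{t}$ (Hurst) par $\frac{2T^{3/2}}{3x^2}$, n'est négligeable que pour $x\soe 10^{16}$ ; l'estimation principale n'est donc établie que sur $[10^{16},\infty[$, et c'est \emph{tout} l'intervalle $[5\times 10^6,\,10^{16}]$ qui est couvert par les majorations $|m_1(x)|\ioe 0.114/\sqrt{x}$ puis $0.129/\sqrt{x}$ du corollaire \ref{modeleracine} (on vérifie que $0.129\log x\ioe 0.0023\sqrt{x}$ dès $x\soe 5\times 10^6$), et non un petit intervalle résiduel. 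Votre raccordement final est donc la bonne idée, mais il porte sur un domaine bien plus vaste que ce que votre rédaction laisse entendre, et sans le remplacement de $T\sim 10^5$ par $T=10^{13}$ la constante $0.0023$ n'est pas atteinte.
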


\begin{proof}
Posons  $T=10^{13}$.  Par le lemme  \ref{machinerie epsi} appliqué avec $\theta=1$, $j=1$, $s=1-1/\log T$ et par la proposition \ref{ramarlog}  on a 
\begin{equation*}
|m_1(x)| \ioe \frac{ 0.1755 \times  0.013}{ \log x}  + \frac{1}{x^2} \int_{1}^{T} |M(t)| \diff{t}+ \frac{8/3}{x} \quad (x \soe T)\,.
\end{equation*}
Par l'article \cite{hurst2018computations} on a  $|M(t)| \ioe \sqrt{t}$ pour $t\ioe 10^{16}$ et donc
\[
\frac{1}{x^{2} } \int_{1}^{T} |M(t)|   \diff{t}+ \frac{8/3}{x} \ioe \frac{2T^{3/2}}{3x^2}+\frac{3}{x}  \ioe   \frac{a}{\log x}  \quad (x \soe 10^{16})
\]
avec $b=( 0.67 T^{3/2}    /10^{32} +3/10^{16}) \log (10^{16})\simeq 7.8 \times 10^{-12}$. On a $0.1725 \times  0.013+b \ioe 0.0023$ et on a prouvé 
\begin{equation*}
|m_1(x)| \ioe   \frac{0.0023}{\log x} \quad ( x \soe 10^{16})\,.  
\end{equation*}
Pour les petites valeurs de $x$ le corollaire \ref{modeleracine} donne l'estimation annoncée entre $5\times 10^{6}$ et $10^{16}$. 
\end{proof}

\begin{proof}[Démonstration  du théorème~\ref{prop:majo-m(x)log(x)}] 
Posons $T=8.2 \times 10^{25}$.  Par la corollaire \ref{modeleracine} on a $|m(t)|\ioe 0.701/\sqrt{t}$ pour $ 3 \ioe t \ioe 10^{16}$  et par le théorème \ref{m petit 4343} on a $|m(t)|\ioe 1/4343$ pour $t \soe 10^{16}$,  on obtient ainsi 
\begin{equation}
6+22727.5\int_{1}^{T}|m(t)|\diff{t} \ioe 4.254 \times 10^{26}\;. \label{RT}
\end{equation}
 Par le lemme  \ref{machinerie H2 CDM} appliqué avec $\theta=1$, $j=1$ et $\delta=1/\log T$,  par la proposition \ref{ramarlog} sur $M$ et le lemme \ref{etape m log} sur $m_1$  utilisés dans l'inégalité $|m(u)| \ioe |M(u)/u|+|m_1(u)|$ puis par l'inégalité \eqref{RT},  on a
\begin{equation}
|m_1(x)|  \ioe \frac{1}{1796.57} \frac{0.013+0.0023}{\log x}  +\frac{ 4.254 \times 10^{26}}{x}\;. \label{moulin}
\end{equation} 
Puisque la fonction $x \mapsto (\log x)/x$ est décroissante à partir de $x=\exp(1)$, on obtient que $|m_1(x)| \log x  \ioe f(x)$ pour $x\soe T$ 
avec $f$ une fonction décroissante, en calculant $f(2.5 \times 10^{42})$ on arrive à 
\begin{equation}
|m_1(x)|  \ioe   \frac{8.517 \times 10^{-6}}{\log x}   \quad (x\soe 2.5 \times 10^{42})\,. \label{first step}
\end{equation}
Par le théorème \ref{m petit 4343} on a la première inégalité des deux suivantes et la seconde provient de la résolution d'une inéquation très simple avec un calcul d'exponentielle, on a
\begin{equation*}
|m_1(x)| \ioe  \frac{1} {11\,470\,909}  \quad (x \soe 2.2 \times 10^{12}) 
 \text{ , }    \frac{1} {11\,470\,909} \ioe \frac{8.517 \times 10^{-6}}{\log x} 
\quad (3 \ioe x \ioe 2.6 \times 10^{42}) 
\end{equation*}
donc le rang de validité de l'inégalité \eqref{first step} est descendu à $2.2 \times 10^{12}$.
On recommence ensuite à partir de la ligne \eqref{moulin} avec notre meilleure estimation de $|m|$ déduite de celle de $|m_1|$ ci-dessus, on obtient
\begin{equation*}
|m_1(x)|  \ioe \frac{1}{1796.57} \frac{0.013+8.517 \times 10^{-6}}{\log x}  +\frac{ 4.254 \times 10^{26}}{x}
\end{equation*}
et par décroissance 
\begin{equation}
|m_1(x)|  \ioe  \frac{7.265 \times 10^{-6}}{\log x}  
 \quad(x \soe 1.5 \times 10^{36}) \,. \label{second step}
\end{equation}
Ensuite on compare à notre estimation constante du  théorème \ref{m petit 4343}, on a
\[
|m_1(x)| \ioe  \frac{1} {11\,470\,909}  \quad (x \soe 2.2 \times 10^{12})   \text{, }  \frac{1}{11\,470\,909} \ioe \frac{7.265 \times 10^{-6}}{\log x} \quad ( 3 \ioe x  \ioe 1.55 \times 10^{36}) 
\]
le rang de validité de \eqref{second step}  est ainsi ramené à $2.2 \times 10^{12}$ puis à $2.15 \times 10^{11}$ en comparant au modèle en $\sqrt{x}$ du lemme \ref{modeleracine}.

Passons à la fonction $m$, on a $|m(x)| \ioe |M(x)/x|+|m_1(x)|$, pour tout $x \soe 2.15 \times 10^{11}$ on obtient $0.013073/\log x$  en arrondissant au supérieur et la validité de cette estimation est ramenée à $230 000$ par les comparaisons avec $\sqrt{x}$ du  lemme \ref{modeleracine} et de l'inégalité \eqref{racinemHelfgott}  puis à $97\, 063$ par les calculs PARI/GP du lemme \ref{calculs GP}.\end{proof}

\subsubsection{\texorpdfstring{En partant de $|M(x)/x| \ioe a/\log^2 x$}{En partant de |M(x)/x| < a/\log^2 x}}

Comme dans les sections précédentes commençons par donner le meilleur résultat de ce type connu à l'heure actuelle sur $|M|$, il n'est pas optimal : par un calcul sur
ordinateur pour les petites valeurs il semble que $\sup_{u \in [1, \infty
[} \log^2 u |M(u)/u| = \log^2 31 \times 4/31  \approx 1.52 $. 
\begin{prop}[El Marraki]\label{logcarremarraki} On a :
\begin{equation*} \frac{|M(x)|}{x} \ioe \frac{362.7}{\log^2 x} \quad (x >1)\,.
\end{equation*}
\end{prop}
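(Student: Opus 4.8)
Le plan n'est pas de redémontrer cette majoration, mais de renvoyer à l'argument original de El Marraki, qui en constitue le contenu substantiel, puis de ramener l'intervalle $x>1$ de l'énoncé à l'intervalle $x\ge 2$ traité dans cet article par une observation élémentaire. L'approche de El Marraki, rappelée dans l'introduction, est purement réelle : on part de la meilleure forme explicite disponible de $M(x)=O(x)$ — une inégalité du type $|M(x)|\le x/c$ obtenue au moyen d'une fonction $H(t)=1-\sum_r c_r\lfloor t/r\rfloor$ vérifiant $\sum_r c_r/r=0$ et approchant la fonction constante $1$ sur $[1,\,+\infty[$ — puis on itère. Une identité de convolution jointe à une sommation d'Abel permet de réinjecter une majoration courante $|M(u)/u|\le\varepsilon(u)$ dans elle-même en gagnant une puissance de $\log$ à chaque étape ; en itérant deux fois à partir de la majoration $O(x)$ on aboutit à une inégalité de la forme $|M(x)|/x\le A_2/\log^2 x$ pour $x$ assez grand, et une optimisation numérique des points de coupure et des poids $c_r$ fournit le couple $(A_2,\text{rang})=(362.7,2)$.

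Pour passer de $x\ge 2$ à l'intervalle annoncé $x>1$, il suffit de remarquer que sur $1<x<2$ on a $M(x)=\mu(1)=1$, donc $|M(x)|/x=1/x<1$, tandis que $362.7/\log^2 x\ge 362.7/\log^2 2>1$ ; l'inégalité est donc vraie sur $]1,2[$ \emph{a fortiori}, et par conséquent pour tout $x>1$. On peut aussi noter ici, comme le fait le texte précédant l'énoncé, que la constante $362.7$ est loin d'être optimale, puisque numériquement $\sup_{u\ge 1}(\log^2 u)\,|M(u)/u|=\log^2 31\times 4/31\approx 1.52$, atteint en $u=31$ où $M(31)=-4$ ; mais cela n'intervient pas dans la suite.

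La seule vraie difficulté réside dans la démonstration originale de El Marraki, et non dans ce qu'on ajoute ici : maintenir les termes d'erreur accumulés lors des deux étapes d'itération suffisamment petits pour que la constante finale reste aussi basse que $362.7$ tout en gardant un rang aussi bas que $2$ demande un choix soigneux des coupures intermédiaires et de la fonction approchante $H$. Pour les besoins du présent article il suffit d'énoncer le résultat et de renvoyer à \cite{MarrakiMobiusIII}.
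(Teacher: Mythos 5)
Votre démarche est essentiellement celle du papier : la démonstration donnée par l'auteur se réduit elle aussi à un renvoi au corollaire en bas de la page 421 de \cite{MarrakiMobiusIII}, et votre esquisse de la méthode itérative de El Marraki ainsi que la vérification élémentaire sur $]1,2[$ (où $M(x)=1$ et $362.7/\log^2 x\ge 362.7/\log^2 2>1$) sont des compléments corrects mais non indispensables. Rien à redire.
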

\begin{proof}
Ce résultat est contenu dans \cite{MarrakiMobiusIII} (corollaire en bas de la page 421).
\end{proof}
Pour la fonction $|m_1|$ nous obtenons le résultat suivant, il est  quant à lui optimal à l'arrondi près (on a $\sup_{u \in 1, \infty}  \log^2 u|m_1(u)|  =  \log^2 7 \times 29/105 =1.0458...$). Nous pourrons en fait aller un peu loin que $x>1$. 
\begin{thm}  \label{m1 log carre 119}
  On a :
\begin{equation*} | m_1(x) |  \ioe \frac{1.046}{\log^2 x} \quad (x >1) \quad \text{ et } \quad |m_1(x)| \ioe \frac{0.138 }{\log^2 x} \quad  (x \soe 671)\,.
\end{equation*}
\begin{equation*} 
|m(x)| \ioe \frac{362.84}{\log^2 x} \quad  (x > 1)\,.
\end{equation*} 
\end{thm}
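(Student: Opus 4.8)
The plan is to follow exactly the template used for the previous conversions (théorèmes \ref{m petit 4343} and \ref{prop:majo-m(x)log(x)}), but now with $j=2$. First I would apply Lemme \ref{machinerie epsi} with parameters $\theta=1$, $j=2$, and $s=1-2/\log T$ for a suitable threshold $T$ (taken large, e.g. $T$ of order $10^{13}$ or $10^{16}$, so that the remainder term $R_T(x)/x$ can be absorbed using the trivial bound $|M(t)|\ioe t$ or the stronger $|M(t)|\ioe\sqrt t$ valid up to $10^{16}$ by \cite{hurst2018computations}). This gives
\begin{equation*}
|m_1(x)| \ioe \sup_{T<u<x}\big(u^{-1}|M(u)|\log^2 u\big)\times \int_1^\infty G_1(t)\,t^{-1+2/\log T}\diff t \times \frac{1}{\log^2 x} + \frac{R_T(x)}{x},
\end{equation*}
and the integral factor tends, as $T\to\infty$, to $\int_1^\infty G_1(t)t^{-1}\diff t = 3/4-\gamma$ by the second identity in \eqref{Muntz G1}. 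By Proposition \ref{logcarremarraki}, $\sup_{u}\log^2 u\,|M(u)/u|$ can be bounded (using the $362.7/\log^2 x$ estimate for $u\soe$ some point, and a direct verification of $\mu(n)$ for small $u$ to handle the finitely many small values); combining, I would get a first estimate $|m_1(x)|\ioe c/\log^2 x$ for $x$ past some large threshold, then drag the validity range down to $x>1$ by comparison with the $\sqrt x$-type bounds of Corollaire \ref{modeleracine} and, for very small $x$, a direct numerical check (the paper already announces $\sup_u \log^2 u\,|m_1(u)| = \log^2 7\cdot 29/105$, so the constant $1.046$ is just the rounded-up value of that supremum, confirming $x>1$ can be reached).

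For the sharper bound $|m_1(x)|\ioe 0.138/\log^2 x$ for $x\soe 671$, the plan is to iterate once more through Lemme \ref{machinerie H2 CDM} with $\theta=1$, $j=2$, $\delta=2/\log T$: feeding in the bound $|m(u)|\ioe |M(u)/u|+|m_1(u)|$ together with the better estimates already obtained for $|m_1|$ (from the first part, or from Théorème \ref{prop:majo-m(x)log(x)}'s $|m_1(x)|\ioe 7.265\times10^{-6}/\log x$, or just $|m(x)|\ioe 0.0130073/\log x$ of Théorème \ref{prop:majo-m(x)log(x)}), the integral factor now being controlled by the second display of \eqref{lemme integre}, i.e. $\int_1^\infty |H_2(t)|t^{-2}\diff t\ioe (\pi^2/6)/4345$. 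This produces a much smaller constant in front of $1/\log^2 x$, and the remainder $R_T/x$ is absorbed as before using $|m(t)|\ioe 0.5/\sqrt t$ (inégalité \eqref{racinemHelfgott}) and $|m(t)|\ioe 1/4343$ (Théorème \ref{m pett 4343}) on the relevant ranges. Then drag the validity down to $x\soe 671$ via the $\sqrt x$-comparisons of Corollaire \ref{modeleracine} and a direct verification for small $x$ (the announced exact supremum $\log^2 7\cdot 29/105\approx 1.046$ occurs at $x=7$, which is why the improved $0.138$ bound only starts at $671$).

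Finally, the bound for $m$ itself is immediate: $|m(x)|\ioe |M(x)/x| + |m_1(x)|\ioe 362.7/\log^2 x + 1.046/\log^2 x$ for $x$ large, giving $363.746/\log^2 x$; to reach the sharper $362.84/\log^2 x$ one uses the $0.138/\log^2 x$ bound for $m_1$ on $x\soe 671$ (so $362.7+0.138=362.838<362.84$) and checks the finitely many $x\in(1,671)$ directly, noting that on that short interval the dominant contribution near $x=2^-$ and small integers is handled by explicit computation of $\mu(n)$ for $n\ioe 7000$.

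\medskip
\textbf{Main obstacle.} The routine parts are the integral evaluations (already supplied by \eqref{Muntz G1} and \eqref{lemme integre}) and the bookkeeping of remainder terms. The genuinely delicate step is the \emph{range reduction}: after each application of Lemme \ref{machinerie epsi} or \ref{machinerie H2 CDM} one only gets the estimate for $x$ beyond a large threshold (because the chosen $T$ is finite and the remainder $R_T/x$ must be negligible), and one must then carefully chain the $\sqrt x$-type inequalities of Corollaire \ref{modeleracine} together with direct numerical verification to bring the validity all the way down to $x>1$ (resp. $x\soe 671$). Getting the constants to round to exactly $1.046$, $0.138$ and $362.84$ — matching the claimed optimal suprema $\log^2 7\cdot 29/105$ — requires the iteration to be pushed far enough and the small-$x$ check to be done with enough precision, and this interplay between the analytic conversion and the finite computation is where all the care goes.
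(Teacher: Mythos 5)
Your plan follows the paper's proof essentially step for step: Lemme \ref{machinerie epsi} with $\theta=1$, $j=2$ applied to l'estimation d'El Marraki, then an iteration through Lemme \ref{machinerie H2 CDM} fed with the resulting bound on $|m|$, range reduction via the $\sqrt{x}$ comparisons and le théorème \ref{prop:majo-m(x)log(x)}, and a direct verification on $[1,671]$ yielding the supremum $(29/105)\log^2 7$ at $x=7$. One caveat: among the alternative inputs you list for the $j=2$ iteration, only a bound of the form $|m(u)| \ioe C/\log^2 u$ (namely $426.94/\log^2 u$, obtained from El Marraki plus your first $m_1$ estimate) actually works, since the supremum requires $\log^2 u\,|m(u)|$ to be bounded — feeding $|m(u)| \ioe 0.0130073/\log u$ would only produce a $1/\log x$ output.
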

Bordellès, en partant de l'estimation de la proposition~\ref{logcarremarraki} due à El Marraki et de l'identité de Balazard~\eqref{bal2} avec $\epsi_1'$, obtient dans \cite{bordelles2015some} l'estimation $|m(x)| \ioe 546/ \log^2 x$ pour tout $x>1$. En utilisant la même identité nous obtenons le résultat $|m(x)| \ioe 427/ \log^2 x$ comme étape intermédiaire (ce qui montre l'intérêt du lemme \ref{log puissance k}).
\begin{proof}[Démonstration  du théorème \ref{m1 log carre 119}]
Par l'article \cite{hurst2018computations} on a  $|M(t)| \ioe \sqrt{t}$ pour $t\ioe 10^{16}$.
Posons $T=10^{16}$,  par le lemme  \ref{machinerie epsi} appliqué avec $\theta=1$, $j=2$ et $s=1-2/\log T$ on a 
\begin{equation}
|m_1(x)| \log^2 x \ioe 362.7 \times 0.177112  + \frac{\log^2 x}{x^2} \int_{1}^T \sqrt{t} \diff{t}+ \frac{8 \log^2 x}{3x}\label{fois log carre}
\end{equation}
  et par la décroissance de la fonction  du membre de droite  de \eqref{fois log carre} on arrive à
\begin{equation*} 
|m_1(x)| \log^2 x \ioe 64.24  \quad  (x \soe 10^{16})\,.
\end{equation*}
Et puisque $m(x)=m_1(x)+M(x)/x$ pour tout $x \soe 1$, on obtient  
\begin{equation}
|m(x)|  \ioe \frac{426.94}{\log^2 x}  \quad  (x \soe 10^{16})\,. \label{premiere etape log carre}
\end{equation}
Posons désormais $T=\exp(18900)$. En utilisant le lemme  \ref{machinerie H2 CDM} appliqué avec $\theta=1$, $j=2$, $\delta=2/\log T$ et l'inégalité de Meissel $|m(t)| \ioe 1$  pour tout $t \soe 1$,  on arrive à  
\begin{equation}
|m_1(x)|  \ioe \frac{1}{2633.6} \frac{426.94}{\log^2 x}  +\frac{3000 \exp(18900)  }{x} \quad (x\soe \exp(18900))\,. \label{etape log 2}
\end{equation}
En multipliant l'inégalité par $\log^2 x$ et en utilisant la décroissance de $x \mapsto  (\log^2 x)/x$ sur l'intervalle considéré, on aboutit à
\begin{equation} 
|m_1(x) \log^2 x| \ioe 0.1622 \quad  (x \soe \exp(22000))\,. \label{first}
\end{equation} 
On abaisse le domaine de validité de \eqref{first} au-dessous de $10^{16}$ car par comparaison avec le théorème \ref{prop:majo-m(x)log(x)} et un calcul d'exponentielle
\[
|m_1(x)| \ioe \frac{7.265 \times 10^{-6}}{\log x} \quad (x \soe 3 \times 10^{11}) \text{, } \frac{7.265 \times 10^{-6}}{\log x} \ioe \frac{0.1622}{\log^2 x} \quad (3 \ioe x \ioe  \exp(22335)) \,.
\] 
On réitère la procédure avec notre meilleure estimation de $|m|$  juste avant la ligne \eqref{etape log 2}, on a
\begin{equation*}
|m_1(x)|  \ioe \frac{1}{2633.6} \frac{362.7+0.1622}{\log^2 x}  +\frac{ 30000 T}{x} \quad (x\soe T)\,.
\end{equation*}
Par décroissance on aboutit à 
\begin{equation} 
|m_1(x) \log^2 x| \ioe 0.1378  \quad  (x \soe \exp(18960) )\,.  \label{second}
\end{equation} 
On abaisse le domaine de validité de \eqref{second} au-dessous de $10^{16}$  par comparaison avec le théorème \ref{prop:majo-m(x)log(x)}, on a
\[
|m_1(x)| \ioe \frac{7.265\times 10^{-6}}{\log x} \quad (x \soe 3 \times 10^{11}) \text{ , } \frac{7.265 \times 10^{-6}}{\log x} \ioe \frac{0.1378}{\log^2 x} \quad (3 \ioe x \ioe  \exp(18967)) \,.
\] 
Ensuite on peut descendre le rang de l'inégalité à $x \soe 7000$ par la comparaison avec le lemme \ref{modeleracine} pour $x \soe 7.7 \times 10^9$ puis avec l'inégalité \eqref{modele du pauvre}. On termine  par une vérification directe entre $671$ et $7000$.

De même une vérification directe montre que 
\begin{equation*} 0 \ioe   m_1(x)  \log^2 x  \ioe 1.046 \quad (1 \ioe x \ioe 671 )\,,
\end{equation*}
plus précisément le maximum est atteint pour $x=7$ et vaut $(29/105) \log^2 7$.

On obtient le résultat sur $|m|$ pour $x \soe 671$ à l'aide de $|m(x)| \ioe |M(x)/x| +|m_1(x)|$, et l'on termine par une  vérification directe.
\end{proof}

 \begin{proof}[Démonstration du théorème \ref{thm A}]
 C'est une conjonction des résultats concernant la fonction $m$ du théorème \ref{m petit 4343} p. \pageref{m petit 4343},   du théorème    \ref{prop:majo-m(x)log(x)}   p. \pageref{prop:majo-m(x)log(x)} et du théorème \ref{m1 log carre 119} p. \pageref{m1 log carre 119}.
 \end{proof}

\subsubsection{Démonstrations des deux familles d'identités du théorème \ref{thm1}}
Nous allons établir la démonstration du théorème \ref{thm1}. Tout repose sur des interversions de sommes et d'intégrales puis sur l'inversion de M\"obius.
\begin{lem}\label{echange}
Soit $\delta : [1,\infty[ \rightarrow \CC$ une fonction localement intégrable, pour tout $x\soe 1$ nous avons : 
\begin{equation}
 \int_{1}^{x}M(x/t) \delta(t) \frac{\diff{t}}{t}=\int_{1}^{x} \sum_{n \ioe u} \mu(n) \delta(u/n) \frac{\diff{u}}{u}\;.\label{Mmusurn}
\end{equation}

En particulier, pour tout $x\soe 1$, nous avons :
\begin{equation}
\int_{1}^{x}M(x/t) \frac{1}{t} \sum_{k \ioe t} \delta\Big(\frac{t}{k}\Big)\diff{t}=\int_{1}^{x} \delta(u) \frac{\diff{u}}{u}\;. \label{Mobintegral} 
\end{equation}
\end{lem}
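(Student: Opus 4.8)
The plan is to treat the two identities in turn: first derive \eqref{Mmusurn} by a double application of Fubini's theorem, and then obtain \eqref{Mobintegral} as a formal consequence of \eqref{Mmusurn} together with the Möbius relation $\sum_{n\mid m}\mu(n)$, which vanishes unless $m=1$.

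For \eqref{Mmusurn}, I would start from $M(x/t)=\sum_{n\leq x/t}\mu(n)$, so that the left-hand side reads $\int_1^x\sum_{n\leq x/t}\mu(n)\,\delta(t)\,\frac{\diff t}{t}$. For $1\leq t\leq x$ the inner sum is finite, and the constraint $n\leq x/t$ is equivalent to $t\leq x/n$ together with $n\leq x$; since $\delta$ is locally integrable and $1/t$ is bounded on $[1,x]$, Fubini's theorem permits exchanging the finite sum and the integral, giving $\sum_{n\leq x}\mu(n)\int_1^{x/n}\delta(t)\,\frac{\diff t}{t}$. In each inner integral I substitute $u=nt$: this sends $t=1$ to $u=n$, sends $t=x/n$ to $u=x$, and leaves $\frac{\diff t}{t}=\frac{\diff u}{u}$ invariant, producing $\sum_{n\leq x}\mu(n)\int_n^x\delta(u/n)\,\frac{\diff u}{u}$. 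Writing $\int_n^x=\int_1^x\mathbf 1_{n\leq u}\,\diff u$ and using that $u\leq x$ makes the constraint $n\leq x$ superfluous once $n\leq u$, I exchange sum and integral a second time to reach $\int_1^x\bigl(\sum_{n\leq u}\mu(n)\,\delta(u/n)\bigr)\frac{\diff u}{u}$, which is the right-hand side of \eqref{Mmusurn}.

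For \eqref{Mobintegral}, I rewrite its left-hand side as $\int_1^x M(x/t)\,D(t)\,\frac{\diff t}{t}$ with $D(t)=\sum_{k\leq t}\delta(t/k)$. On each interval $[j,j+1)$ one has $D(t)=\sum_{k=1}^{j}\delta(t/k)$, a finite sum of locally integrable functions (scaling preserves local integrability), so $D$ is itself locally integrable on $[1,\infty[$ and \eqref{Mmusurn} applies with $\delta$ replaced by $D$. This yields $\int_1^x\sum_{n\leq u}\mu(n)\,D(u/n)\,\frac{\diff u}{u}=\int_1^x\Bigl(\sum_{n\leq u}\mu(n)\sum_{k\leq u/n}\delta\!\bigl(\tfrac{u}{nk}\bigr)\Bigr)\frac{\diff u}{u}$. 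For fixed $u$ the double sum runs over pairs $(n,k)$ with $nk\leq u$; grouping by the product $m=nk$ turns it into $\sum_{m\leq u}\delta(u/m)\sum_{n\mid m}\mu(n)=\delta(u)$, and \eqref{Mobintegral} follows.

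The only delicate point is the legitimacy of the two interchanges of summation and integration in the proof of \eqref{Mmusurn}, plus the measurability and local integrability of $D$; all of this is harmless because every sum occurring is finite on the relevant range of the variable, so there is no genuine obstacle and the lemma is essentially a bookkeeping identity. I would make the first interchange precise by observing that $(n,t)\mapsto\mu(n)\delta(t)/t$ is integrable on $\{1\leq t\leq x/n,\ n\leq x\}$, and the second by the symmetric description of the index set $\{\,n\leq u\leq x\,\}$; after that, the substitution $u=nt$ and the Möbius identity do all the remaining work.
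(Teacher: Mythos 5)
Votre démonstration est correcte et suit essentiellement le même chemin que celle de l'article : échange somme/intégrale puis changement de variable $u=nt$ pour \eqref{Mmusurn}, puis application de cette formule à $\delta_1(t)=\sum_{k\ioe t}\delta(t/k)$ et inversion de M\"obius pour \eqref{Mobintegral}. Les seules différences sont des précisions supplémentaires (justification des interversions par Fubini, intégrabilité locale de $D$) qui ne changent pas l'argument.
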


\begin{proof} 
En échangeant les sommes et les  intégrales dans ce qui suit, puis en effectuant
le changement de variable $u=t n$, on obtient : 
\begin{align*} 
\int_{1}^{x}M(x/t) \delta(t)  \frac{\diff{t}}{t} &=\sum_{n \ioe x} \mu(n) \int_{1}^{x/n} \delta(t)   \frac{\diff{t}}{t} \\
&=\sum_{n \ioe x} \mu(n) \int_{n}^{x}  \delta(u /n) \frac{\diff{u}}{u} 
= \int_{1}^{x} \sum_{n \ioe u} \mu(n) \delta(u/n) \frac{\diff{u}}{u}\;,
\end{align*}
ce qui est la première égalité annoncée en~\eqref{Mmusurn}.

Pour la seconde formule, considérons la fonction $\delta_1(t)= \sum_{k \ioe t} \delta(t/k)$. Elle est localement intégrable sur $[1,x]$, on peut  utiliser 
la formule~\eqref{Mmusurn}, qui a été précédemment démontrée, pour la fonction $\delta_1$. On a 
\begin{equation*}
\int_{1}^{x}M(x/t) \delta_1(t) \frac{\diff{t}}{t}=\int_{1}^{x} \sum_{n \ioe u}\mu(n) \delta_1(u/n) \frac{\diff{u}}{u}\;.
\end{equation*} 
Pour finir, on remarque que la somme intégrée peut se calculer par inversion de M\"obius, 
\begin{equation*}
\sum_{n \ioe u} \mu(n) \delta_1(u/n)= \sum_{n \ioe u}\mu(n) \sum_{k \ioe u/n}\delta\big(u/(nk)\big)= \delta(u)\;,
\end{equation*}
ce qui achève la démonstration.
\end{proof}
On démontre de la même manière le lemme suivant.
\begin{lem}
Soit $\delta : [1,\infty[ \rightarrow \CC$ une fonction localement intégrable, pour tout $x\soe 1$ nous avons :
\begin{equation}
\int_{1}^{x}m(x/t) \delta(t) \frac{\diff{t}}{t^2}=\int_{1}^{x} \sum_{n \ioe u} \mu(n) \delta\left(\frac{u}{n}\right) \frac{\diff{u}}{u^2}\;. \label{echangem}
\end{equation}

En particulier, pour tout $x\soe 1$, nous avons :
\begin{equation}
\int_{1}^{x}m(x/t) \sum_{k \ioe t} \delta(t/k)\frac{\diff{t}}{t^2}=\int_{1}^{x} \delta(u) \frac{\diff{u}}{u^2}\;. \label{echangemsomme}
\end{equation}
\end{lem}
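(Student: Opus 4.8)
Le plan est de reproduire la démonstration du lemme~\ref{echange}, en remplaçant simplement le poids $\diff{t}/t$ par $\diff{t}/t^2$ et en prenant garde au facteur $1/n$ supplémentaire provenant de $m$. D'abord, j'écrirais $m(x/t)=\sum_{n\ioe x/t}\mu(n)/n$ et j'interchangerais cette somme finie avec l'intégrale~: c'est licite car $\delta$ est localement intégrable et, pour chaque $t$ fixé, seuls un nombre fini d'entiers $n$ interviennent, la condition $n\ioe x/t$ se réécrivant $n\ioe x$ et $t\ioe x/n$. On obtient ainsi
\[
\int_{1}^{x}m(x/t)\,\delta(t)\,\frac{\diff{t}}{t^2}=\sum_{n\ioe x}\frac{\mu(n)}{n}\int_{1}^{x/n}\delta(t)\,\frac{\diff{t}}{t^2}\;.
\]

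Ensuite, j'effectuerais le changement de variable $u=nt$ dans l'intégrale intérieure. Comme $\delta(t)\,\diff{t}/t^2=n\,\delta(u/n)\,\diff{u}/u^2$, le facteur $n$ compense exactement le $1/n$ issu de $m$, tandis que les bornes $1\ioe t\ioe x/n$ deviennent $n\ioe u\ioe x$. En ré-assemblant puis en échangeant de nouveau l'ordre de la somme et de l'intégrale, il vient
\[
\sum_{n\ioe x}\mu(n)\int_{n}^{x}\delta(u/n)\,\frac{\diff{u}}{u^2}=\int_{1}^{x}\Big(\sum_{n\ioe u}\mu(n)\,\delta(u/n)\Big)\frac{\diff{u}}{u^2}\;,
\]
ce qui est l'égalité~\eqref{echangem}. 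Pour la formule particulière~\eqref{echangemsomme}, j'appliquerais ce qui précède à la fonction $\delta_1(t)=\sum_{k\ioe t}\delta(t/k)$, encore localement intégrable sur $[1,x]$, puis je simplifierais la somme intégrée par inversion de M\"obius~: $\sum_{n\ioe u}\mu(n)\,\delta_1(u/n)=\sum_{n\ioe u}\mu(n)\sum_{k\ioe u/n}\delta\big(u/(nk)\big)=\sum_{d\ioe u}\big(\sum_{n\mid d}\mu(n)\big)\delta(u/d)=\delta(u)$, de sorte que le membre de droite se réduit à $\int_{1}^{x}\delta(u)\,\diff{u}/u^2$.

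Je n'attends pas de difficulté sérieuse~: le seul point réellement à surveiller est le suivi des puissances de $u$ dans le changement de variable, c'est-à-dire que le facteur $n$ provenant de $\diff{t}/t^2$ absorbe bien le poids $1/n$ de $m$ ; la justification des deux interversions de type Fubini est anodine puisqu'elles portent à chaque fois sur une somme finie intégrée contre une fonction localement intégrable sur un intervalle compact. Le c\oe ur de l'argument est donc strictement identique à celui du lemme~\ref{echange}, comme annoncé.
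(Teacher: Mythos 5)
Votre preuve est correcte et suit exactement la voie que le papier indique pour ce lemme (le texte dit simplement « On démontre de la même manière le lemme suivant », renvoyant à la preuve du lemme~\ref{echange}) : interversion somme--intégrale, changement de variable $u=nt$ où le jacobien compense le facteur $1/n$ de $m$, puis inversion de M\"obius pour le cas particulier. Rien à redire.
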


\begin{proof}[Démonstration du théorème \ref{thm1}]
En intervertissant la somme et l'intégrale on obtient 
\begin{equation}
\int_{1}^{x}M(x/t)  \diff{t} =\sum_{n \ioe x} \mu(n) \int_{1}^{x/n}   \diff{t}=\sum_{n \ioe x} \mu(n)(x/n-1)=xm(x)-M(x)=xm_1(x)\,. \label{M integral}
\end{equation}
Puisque $G$ s'exprime comme la différence $ G(t)=1 -\frac 1t\sum_{n\leqslant t} g\big(\frac{n}{t}\big) $ on trouve
\begin{align*}
  \int_1^x M(x/t) G(t)  \diff{t}= \int_1^x M(x/t)  \diff{t}&-\int_1^x M(x/t) \frac 1t\sum_{n\leqslant t} g\Big(\frac{n}{t}\Big)  \diff{t} \\
 =x m_1(x)&- \int_{1}^{x}  g(1/u) \frac{\diff{u}}{u}
\end{align*}
où la première intégrale est obtenue par  \eqref{M integral} et la deuxième par l'inversion de M\"obuis du  lemme \ref{echange} avec l'égalité  \eqref{Mobintegral} appliquée pour $\delta(u)=g(1/u)$. Le changement de variable $y=x/u$ termine la démonstration de la première identité du théorème \ref{thm1} à une division par $x$ près.

Passons aux identités avec $m$, on a l'égalité suivante
 \begin{equation}
\int_{1}^{x}m(x/t)  \frac{\diff{t}}{t^2}=\sum_{n \ioe x} \frac{\mu(n)}{n} \int_{1}^{x/n} \frac{\diff{t}}{t^2}=\sum_{n \ioe x} \frac{\mu(n)}{n} \left( 1-  \frac{n}{x} \right)=m_1(x)\,. \label{m integral}
\end{equation}
Puisque $H$ s'exprime comme la différence $ H(t)=1 -\sum_{n\leqslant t} h\big(\frac{n}{t}\big) $ on trouve
\begin{align}
  \int_1^x m(x/t) H(t)  \frac{\diff{t}}{t^2}= \int_1^x m(x/t) \frac{\diff{t}}{t^2}&-\int_1^x m(x/t) \sum_{n\leqslant t} h\Big(\frac{n}{t}\Big)  \frac{\diff{t}}{t^2} \nonumber \\
 =x m_1(x)&- \int_{1}^{x}  h(1/u) \frac{\diff{u}}{u^2}  \label{avant soustraction}
\end{align}
où la première intégrale est obtenue par  \eqref{m integral} et la deuxième par l'inversion de M\"obuis   \eqref{echangemsomme} appliquée pour $\delta(u)=h(1/u)$. Le changement de variable $y=x/u$ donne
\begin{equation}
\int_{1}^{x}  h(1/u) \frac{\diff{u}}{u^2}=\int_{1/x}^{1}  h(y) \diff{y}=\int_{0}^{1}  h(y) \diff{y}-\int_0^{1/x}  h(y) \diff{y} 
\end{equation}
on conclut puisque  $\int_{0}^{1}  h(y) \diff{y}=0$ par hypothèse.  \end{proof}

\section{\texorpdfstring{De $m_1$ vers $\check{m}$}{De m1 vers check(m)}}

Nous suivrons une autre méthode que les articles \cite{BalazardHal} et \cite{RamarExplicitMob} pour étudier $\check{m}$, les deux auteurs passaient par des identités comportant la fonction $M$ et nous utiliserons la fonction $m_1$.

On peut réfléchir par analogie entre le point 1) du théorème \ref{thm1}  et la proposition \ref{mchliss}: les identités et les conversions  pour le couple $(m,M)$ qui passent par des fonctions $G$ vont pouvoir se réaliser pour le couple $(\check{m},m_1)$. Nous utiliserons ces identités et les bonnes estimations obtenues pour $|m_1|$ pour aboutir à des estimations de $|\check{m}-1|$ proches. Ce qui montre l'intérêt d'avoir étudié plus précisément des majorations de  $|m_1|$ et pas uniquement de s'être focalisé sur $|m|$.
 \begin{prop}\label{mchliss}Soit $g :[0,1] \rightarrow \CC$ une fonction intégrable avec $\int_{0}^{1}g(y) \diff{y}=1$ et soit $x\soe 1$, on a :
\begin{equation*}
(\check{m}(x)-1)-m_1(x) =\int_{1}^{x}m_{1}(x/t) G(t) \frac{\diff{t}}{t} -\frac{1}{x}\int_{1/x}^{1}\frac{g(y)}{y} \diff{y}-\int_{0}^{1/x}g(y) \diff{y} 
\end{equation*}
\begin{equation*}
\text{où } \quad  G(t)=1 -\frac 1t\sum_{n\leqslant t} g\Big(\frac{n}{t}\Big) \,.
\end{equation*}
\end{prop}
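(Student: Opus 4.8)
The statement is the $(\check m, m_1)$-analogue of part 1) of Theorem \ref{thm1}, so the plan is to mirror the proof of that theorem, replacing the pair $(m,M)$ by $(\check m, m_1)$ and using the same Möbius-inversion machinery. The starting point is an identity analogous to \eqref{M integral}: I would first establish that
\begin{equation*}
\int_1^x m_1(x/t)\,\frac{\diff t}{t} = \check m(x) - 1 - m_1(x) + (\text{boundary terms}),
\end{equation*}
by writing $m_1(x/t) = m(x/t) - \frac{M(x/t)}{x/t}$, expanding each sum as $\sum_{n\le x/t}\mu(n)/n\cdot(\dots)$, interchanging sum and integral, and computing the resulting elementary $t$-integrals. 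Here the key computation is that $\int_1^x m(x/t)\,\diff t/t = \sum_{n\le x}(\mu(n)/n)\log(x/n) = \check m(x)$, which identifies the smoothed sum; the term coming from $M(x/t)/(x/t)$ contributes $m_1(x)$ plus a correction. This is the analogue of how $\int_1^x M(x/t)\,\diff t = x m_1(x)$ was used.

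Next, exactly as in the proof of Theorem \ref{thm1}, I would write $G(t) = 1 - \frac1t\sum_{n\le t} g(n/t)$ and split
\begin{equation*}
\int_1^x m_1(x/t) G(t)\,\frac{\diff t}{t} = \int_1^x m_1(x/t)\,\frac{\diff t}{t} - \int_1^x m_1(x/t)\,\frac1t\sum_{n\le t} g\Big(\frac{n}{t}\Big)\,\frac{\diff t}{t}.
\end{equation*}
The first integral is handled by the identity of the previous paragraph. For the second integral I would apply a Möbius-inversion lemma of the type of Lemma \ref{echange} — specifically the weighted version adapted to $m_1$ rather than $M$ — to collapse the double sum $\sum_{n\le u}\mu(n)\sum_{k\le u/n}\delta(u/(nk)) = \delta(u)$ with $\delta(u) = g(1/u)$, leaving a single integral $\int_1^x g(1/u)\,\frac{\diff u}{u}$ (up to the appropriate power of $u$ coming from the $m_1$-weight). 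The change of variable $y = x/u$ then converts this into $\int_{1/x}^1 g(y)/y\,\diff y = \int_0^1 g(y)/y\,\diff y - \int_0^{1/x} g(y)/y\,\diff y$; the $\int_0^1$ piece is where the normalization $\int_0^1 g = 1$ (or the corresponding logarithmic moment) gets used, and the remaining pieces assemble into the boundary terms $-\frac1x\int_{1/x}^1 g(y)/y\,\diff y$ and $-\int_0^{1/x} g(y)\,\diff y$ in the statement.

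The main obstacle I anticipate is bookkeeping rather than conceptual: because $m_1$ carries an extra factor $1/n$ relative to $M$ inside its defining sum, the interchange-of-sum-and-integral steps produce integrals of $t^{-2}$ (as in \eqref{echangem}) mixed with the $t^{-1}$ from the integral in the statement, so I must be careful to track which Möbius-inversion identity — the $M$-version \eqref{Mobintegral} or the $m$-version \eqref{echangemsomme} — applies to which piece, and to get the boundary terms at $u=1$ and $u=x$ exactly right. The two distinct boundary contributions in the conclusion (one divided by $x$, one not) are the signature of the two different weights, and verifying that they come out with the stated signs and normalizations is the delicate part; everything else follows the template of Theorem \ref{thm1} verbatim.
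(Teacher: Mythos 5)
Your plan is correct and matches the paper's proof in all essentials: the same Möbius-inversion lemmas (\eqref{Mobintegral} for the $M$-piece, \eqref{echangemsomme} for the $m$-piece with $\delta(u)=g(1/u)$), the key computation $\int_1^x m(x/t)\,\diff t/t=\check m(x)$, and the change of variable $y=x/u$. The paper merely organizes the bookkeeping you anticipate in the opposite order — it first proves the intermediate identity $\check m(x)-1=\int_1^x m(x/t)G(t)\,\diff t/t-\int_0^{1/x}g(y)\,\diff y$ and then subtracts the part 1) identity of Theorem \ref{thm1}, which settles exactly the signs and the two boundary terms you flag as the delicate point.
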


\begin{proof}
Commençons par prouver l'identité suivante
\begin{equation}
\check{m}(x)-1 =\int_{1}^{x}m(x/t) G(t) \frac{\diff{t}}{t}-\int_{0}^{1/x}g(y) \diff{y} \quad (x \soe 1)\; . \label{checkm par m}
\end{equation}
Par l'égalité \eqref{echangemsomme}  basée sur l'inversion de M\"obius appliquée avec $\delta(u)=g(1/u)$ on a
\begin{align*}
\int_{1}^{x}m(x/t) G(t) \frac{\diff{t}}{t}&=\int_{1}^{x}m(x/t) \frac{\diff{t}}{t}-\int_{1}^{x}m(x/t) \sum_{n \ioe t} g(n/t)\frac{\diff{t}}{t^2}\\
&=\sum_{n \ioe x} \frac{\mu(n)}{n} \log(x/n)-\int_{1}^{x}g(1/u) \frac{\diff{u}}{u^2} 
\end{align*}
mais puisque $\int_{0}^{1}g(y) \diff{y}=1$, un changement de variable donne 
\[
\int_{1}^{x}g(1/u)u^{-2}\diff{u}=\int_{1/x}^{1}g(y) \diff{y}=1-\int_{0}^{1/x}g(y) \diff{y},
\]
 ce qui termine de prouver l'égalité \eqref{checkm par m}. On peut alors  calculer 
\[
\int_{1}^{x}m_{1}(x/t) G(t) \frac{\diff{t}}{t}=\int_{1}^{x}m(x/t) G(t) \frac{\diff{t}}{t}-\int_{1}^{x}\frac{M(x/t)}{x/t} G(t) \frac{\diff{t}}{t}
\]
en soustrayant l'égalité \eqref{checkm par m} de l'identité de la partie 1) du théorème \ref{thm1}, on obtient
\begin{equation*}
\int_{1}^{x}m_{1}(x/t) G(t) \frac{\diff{t}}{t}=\check{m}(x)-1 +\int_{0}^{1/x}g(y) \diff{y}-\Big(m_1(x)-\frac{1}{x} \int_{1/x}^{1}\frac{g(y)}{y} \diff{y} \Big)
\end{equation*}
ce qui termine la démonstration.
\end{proof}
La proposition ci-dessus est apparentée au théorème \ref{thm1} et il était aussi possible de spécialiser une proposition générale de notre thèse \cite[p.~17]{daval2019identites} pour obtenir toutes ces identités. 

En prenant la fonction $g_1(y)=4(1-y^2)y$ issues des travaux de Balazard on obtient le résultat suivant.
\begin{lem} \label{machinerie G1 check}
Il existe une fonction $G_1 : \RR_+ \rightarrow \RR_+$ telle que  pour tous $T,x,j,\theta  \in\RR$ tels que $1< T \ioe x$, $j \soe 0$ et $\theta>-1$, on a
\begin{equation*} 
|\check{m}(x)-1| \ioe \sup_{T < u < x} \big( u^{-\theta+1} |m_1(u)| \log^j u \big) \times \Big(1+\int_{1}^{\infty} G_1(t) t^{-\theta +\frac{j}{\log T}}\diff{t}\Big) \times \frac{x^{\theta-1}}{\log^j x}  +  \frac{R_T(x)}{x} \label{boite G1 check}
\end{equation*}
avec $R_T(x)= 8/3+  (1/x) \int_{1}^{T} |M(t)|   \diff{t}  $. \newline

 Pour tout $s \in\CC$ tel que $\Re s>-1$, $s\neq 1$, on a 
\begin{equation*}
1+\int_{1}^{\infty} G_1(t) t^{-s}\diff{t}
= \frac{s}{s-1}- \frac{8\zeta(s)}{(s+1)(s+3)}\;.  \quad \text{Et aussi } 1+ \int_{1}^{\infty} G_1(t) t^{-1}\diff{t}=\frac{7}{4}-\gamma\;.  \label{Muntz G1 check}
\end{equation*}
\end{lem}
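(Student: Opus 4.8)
Voici une esquisse de la démonstration que je suivrais.

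La stratégie consiste à recopier la démonstration du lemme~\ref{machinerie epsi} en y remplaçant le théorème~\ref{thm1}~1) par la proposition~\ref{mchliss}, spécialisée à la fonction de Balazard $g_1(y)=4y(1-y^2)$ (pour laquelle on dispose déjà de l'encadrement $0\ioe G_1(t)\ioe t^{-2}$ de~\eqref{encadrement G1}). On part donc de l'identité que donne la proposition~\ref{mchliss} avec $g=g_1$,
\[
\check m(x)-1 = m_1(x) + \int_1^x m_1\!\Big(\frac{x}{t}\Big)\,G_1(t)\,\frac{\diff t}{t} \;-\; \frac{1}{x}\int_{1/x}^1\frac{g_1(y)}{y}\,\diff y \;-\; \int_0^{1/x} g_1(y)\,\diff y \qquad (x\soe 1),
\]
et il faut majorer séparément les quatre morceaux du second membre.

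Le seul point réellement nouveau par rapport au lemme~\ref{machinerie epsi} est le terme isolé $m_1(x)$, qu'il faut majorer par $\sup_{T<u<x}\big(u^{1-\theta}|m_1(u)|\log^j u\big)\,x^{\theta-1}/\log^j x$~: c'est lui qui fournit le terme supplémentaire $1$ dans le facteur $1+\int_1^\infty G_1(t)t^{-\theta+j/\log T}\diff t$. Pour l'obtenir, je remarquerais d'abord que $m_1$ est \emph{continue} sur $[1,\infty[$, puisque $m_1(x)=\int_1^x M(t)t^{-2}\,\diff t$ (conséquence de~\eqref{M integral} via le changement de variable $v=x/t$, voir aussi la remarque~\ref{limite}) est une primitive d'une fonction localement bornée. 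La fonction $w\colon u\mapsto u^{1-\theta}|m_1(u)|\log^j u$ étant alors continue, on a $w(x)=\lim_{u\to x^-}w(u)\ioe\sup_{T<u<x}w(u)$ pour $T<x$, d'où la majoration voulue de $|m_1(x)|=w(x)\,x^{\theta-1}/\log^j x$.

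Pour l'intégrale $\int_1^x m_1(x/t)G_1(t)\,\diff t/t$, je découperais en $\int_1^{x/T}+\int_{x/T}^x$ (licite car $x\soe T$). Sur $[1,x/T]$ l'argument $x/t$ parcourt $(T,x)$~; en écrivant $m_1(x/t)=(x/t)^{\theta-1}w(x/t)/\log^j(x/t)$, en sortant $\sup_{T<u<x}w(u)$, puis en appliquant — si $j>0$, le cas $j=0$ étant immédiat — le lemme~\ref{log puissance k} à la fonction positive $F(t)=t^{1-\theta}G_1(t)$, exactement comme aux lignes~\eqref{G1 entre 1 et x/T} et~\eqref{G1 et log} du lemme~\ref{machinerie epsi}, on fait apparaître le facteur $\log^{-j}(x)\int_1^\infty G_1(t)t^{-\theta+j/\log T}\diff t$ (convergent car $G_1(t)\ioe t^{-2}$). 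Sur $[x/T,x]$ l'argument $x/t$ reste dans $[1,T]$~; à l'aide de $0\ioe G_1(t)\ioe t^{-2}$ et de la majoration élémentaire $|m_1(u)|\ioe\int_1^u|M(v)|v^{-2}\,\diff v$ on borne ce morceau par une quantité du type de celle du lemme~\ref{machinerie epsi} (essentiellement $x^{-2}$ fois une intégrale sur $[1,T]$), à ranger dans $R_T(x)/x$. Enfin les deux termes de bord se regroupent~: comme $g_1\soe 0$ et $xy\ioe 1$ sur $[0,1/x]$, on a $\int_0^{1/x}g_1(y)\,\diff y\ioe \tfrac{1}{x}\int_0^{1/x}\tfrac{g_1(y)}{y}\,\diff y$, d'où $\big|\tfrac{1}{x}\int_{1/x}^1\tfrac{g_1(y)}{y}\,\diff y+\int_0^{1/x}g_1(y)\,\diff y\big|\ioe \tfrac{1}{x}\int_0^1\tfrac{g_1(y)}{y}\,\diff y=\tfrac{8/3}{x}$, ce qui reconstitue le terme $R_T(x)$ du lemme~\ref{machinerie epsi}.

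Les deux formules de Mellin ne demandent rien de neuf~: par~\eqref{Muntz G1}, on a $1+\int_1^\infty G_1(t)t^{-s}\,\diff t = 1+\big(\tfrac{1}{s-1}-\tfrac{8\zeta(s)}{(s+1)(s+3)}\big)=\tfrac{s}{s-1}-\tfrac{8\zeta(s)}{(s+1)(s+3)}$ pour $\Re s>-1$, $s\neq 1$ (même argument de prolongement analytique que dans le lemme~\ref{machinerie epsi}), et $1+(\tfrac{3}{4}-\gamma)=\tfrac{7}{4}-\gamma$. L'obstacle principal est donc le terme isolé $m_1(x)$~: il faut voir que c'est la continuité de $m_1$ qui autorise à le majorer par le supremum sur l'intervalle \emph{ouvert} $(T,x)$ — tout le reste étant une transcription du lemme~\ref{machinerie epsi}, la seule partie encore un peu technique étant le contrôle explicite du reste $R_T(x)$.
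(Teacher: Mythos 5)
Votre démarche coïncide avec celle de l'article : la preuve du papier se réduit à indiquer que l'on reprend mot pour mot la démonstration du lemme~\ref{machinerie epsi} en partant de la proposition~\ref{mchliss} spécialisée à $g_1(y)=4y(1-y^2)$, que le terme $+1$ provient de l'inégalité $|\check{m}(x)-1| \ioe |\check{m}(x)-1-m_1(x)|+|m_1(x)|$ et que les termes de bord valent exactement $(8x^3-6x^2+1)/(3x^4) \ioe (8/3)/x$ (vous les majorez un peu différemment, mais de façon tout aussi valable). Votre seule addition est la justification, par continuité de $m_1$, du contrôle du terme isolé $|m_1(x)|$ par le supremum sur l'intervalle ouvert $]T,\,x[$ --- un détail que le papier laisse implicite --- ainsi que la vérification explicite que les formules de Mellin s'obtiennent en ajoutant $1$ à celles de \eqref{Muntz G1}.
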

\begin{proof}
 Posons $g_1(y)=4(1-y^2)y$. En comparant la première formule du théorème \ref{thm1} et l'identité de la proposition \ref{mchliss} on comprend que la démonstration est identique à celle du lemme \ref{machinerie epsi} en remarquant de plus qu'un calcul de primitive donne
\[
 0 \ioe \frac{1}{x}\int_{1/x}^1g_1(y)/y \diff{y}+\int_{0}^{1/x}g_1(y) \diff{y}=\frac{8x^3-6x^2+1}{3x^4}\ioe \frac{8/3}{x}\;.
\] 
Le  $+1$ devant les intégrales provient de  $|\check{m}(x)-1| \ioe |\check{m}(x)-1-m_1(x)|+|m_1(x)|$. \end{proof}

\begin{thm} \label{thm check}
 Sur des intervalles finis,  on a les estimations suivantes : 
\begin{equation}
|\check{m}(x)-1| \ioe \frac{0.16}{\sqrt{x}}   \quad ( 10^{9} \ioe x \ioe 10^{16}) \quad\text{et}\quad |\check{m}(x)-1| \ioe \frac{7.1}{\sqrt{x}}   \quad (10^{16} \ioe x \ioe 10^{21})\;. 
\label{check m racine}
\end{equation}
Au voisinage de l'infini, on a :
\begin{align}
|\check{m}(x)-1| &\ioe    \frac{1}{9\,780\,919} \quad ( x \soe  2.5 \times 10^{12})\, , \label{check m epsilon}\\
|\check{m}(x)-1| &\ioe    \frac{8.55 \times 10^{-6}}{\log x} \quad ( x \soe 2.5 \times 10^{11})\, , \label{check m un sur log}\\
|\check{m}(x)-1| &\ioe    \frac{0.162}{\log^2 x} \quad ( x \soe 3) \,. \label{check m un sur log carre}
\end{align}
\end{thm}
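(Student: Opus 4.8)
Le principe est d'exploiter le lemme \ref{machinerie G1 check}, qui convertit une majoration de $|m_1|$ en une majoration de $|\check{m}-1|$, exactement comme les lemmes \ref{machinerie epsi} et \ref{machinerie H2 CDM} ont permis de passer de $M$ à $m$ ; on y injecte les majorations de $|m_1|$ déjà obtenues dans les théorèmes \ref{m petit 4343}, \ref{prop:majo-m(x)log(x)} et \ref{m1 log carre 119} ainsi que dans le corollaire \ref{modeleracine}. Comme l'identité de la proposition \ref{mchliss} reliant $\check{m}-1$ et $m_1$ est le pendant exact du point 1) du théorème \ref{thm1} reliant $m$ et $M$, le facteur multiplicatif perdu est de même nature : c'est $\int_{1}^{\infty}G_1(t)t^{-s}\diff{t}$ évalué en $s=\theta-j/\log T$, qui tend vers $3/4-\gamma$ lorsque $\theta\to 1$ et $T\to\infty$, le $+1$ provenant de $|\check{m}(x)-1|\ioe|m_1(x)|+|\check{m}(x)-1-m_1(x)|$, soit un facteur total tendant vers $7/4-\gamma\simeq 1.1728$ (deuxième formule de \eqref{Muntz G1 check}). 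Comme pour tout $T$ fini ce facteur est légèrement supérieur à sa limite, les constantes finales seront obtenues par arrondi supérieur.

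Pour les deux estimations en $1/\sqrt{x}$ de \eqref{check m racine}, j'appliquerais le lemme \ref{machinerie G1 check} avec $\theta=1/2$, $j=0$ et $s=1/2$ : la formule de Müntz y donne $1+\int_{1}^{\infty}G_1(t)t^{-1/2}\diff{t}=-1-\tfrac{32}{21}\zeta(1/2)\simeq 1.225$. On prend $T=5\times 10^{6}$, de sorte que $\int_{1}^{T}|M(t)|\diff{t}$ se majore directement à partir des valeurs tabulées de $M$ et de l'inégalité $|M(t)|\ioe 0.5\sqrt{t}$ de \eqref{racineMHurst}, et le reste $R_T(x)/x$ est alors absorbé par la marge pour $x\soe 10^{9}$. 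Le corollaire \ref{modeleracine} fournit $\sqrt{u}\,|m_1(u)|\ioe 0.129$ pour $5\times 10^{6}\ioe u\ioe 10^{16}$ puis $\sqrt{u}\,|m_1(u)|\ioe 5.792$ pour $u\ioe 10^{21}$ ; on en déduit $|\check{m}(x)-1|\ioe 0.129\times 1.225/\sqrt{x}<0.16/\sqrt{x}$ sur $[10^{9},10^{16}]$ et $|\check{m}(x)-1|\ioe 5.792\times 1.225/\sqrt{x}<7.1/\sqrt{x}$ sur $[10^{16},10^{21}]$.

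Pour les trois estimations asymptotiques, j'appliquerais le lemme \ref{machinerie G1 check} avec $\theta=1$ et $j=0,1,2$ respectivement, en prenant $T$ très grand afin que d'une part le facteur $1+\int_{1}^{\infty}G_1(t)t^{-1+j/\log T}\diff{t}$ soit aussi proche que voulu de $7/4-\gamma$, d'autre part le reste $R_T(x)/x$ soit négligeable dès $x\soe 10^{21}$ (on utilise $|M(t)|\ioe\sqrt{t}$ pour $t\soe 10^{16}$ pour majorer $\int_{1}^{T}|M(t)|\diff{t}$). En y injectant $|m_1(u)|\ioe 1/11\,470\,909$ du théorème \ref{m petit 4343}, $|m_1(u)|\ioe 7.265\times 10^{-6}/\log u$ du théorème \ref{prop:majo-m(x)log(x)} et $|m_1(u)|\ioe 0.138/\log^2 u$ du théorème \ref{m1 log carre 119}, on obtient $|\check{m}(x)-1|$ majoré respectivement par $1/9\,780\,919$, par $8.55\times 10^{-6}/\log x$ et par $0.162/\log^2 x$, d'abord pour $x$ au-delà d'un grand seuil. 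On redescend ensuite le domaine de validité jusqu'aux seuils annoncés à l'aide des majorations en $1/\sqrt{x}$ de \eqref{check m racine} (valables sur $[10^{9},10^{21}]$) et des comparaisons mutuelles entre les trois bornes asymptotiques, comme dans les preuves des théorèmes \ref{m petit 4343} et \ref{prop:majo-m(x)log(x)} ; pour $0.162/\log^2 x$ une vérification directe des petits $x$ est de plus nécessaire pour atteindre $x\soe 3$.

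Le point délicat n'est pas conceptuel mais numérique : il faut ajuster $T$ dans chaque cas pour que simultanément (i) le facteur multiplicatif soit assez proche de $7/4-\gamma$ (resp. $1.225$) pour que l'arrondi à la constante annoncée reste licite, (ii) le reste $R_T(x)/x$ soit dominé par la marge sur tout l'intervalle visé, et (iii) le recollement avec les estimations en $\sqrt{x}$ ramène bien la validité au seuil voulu. Le cas le plus serré est celui de la borne constante $1/9\,780\,919$, où $(7/4-\gamma)/11\,470\,909$ n'est que de très peu inférieur à $1/9\,780\,919$ : il ne laisse presque aucune marge pour le reste et oblige à prendre $T$ vraiment grand. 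Le reste — recollements en racine et vérifications directes pour les petits $x$ dans le cas $1/\log^2 x$ — est de la plomberie, immédiate compte tenu des outils déjà établis.
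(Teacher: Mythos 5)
Votre esquisse suit essentiellement la même démarche que la preuve du papier : application du lemme \ref{machinerie G1 check} avec les mêmes choix de paramètres ($\theta=1/2$, $j=0$ pour les bornes en $1/\sqrt{x}$ avec le facteur $\simeq 1.225$ ; $\theta=1$, $j=0,1,2$ avec le facteur $\simeq 7/4-\gamma$ pour les trois bornes asymptotiques), injection des majorations de $|m_1|$ issues des théorèmes \ref{m petit 4343}, \ref{prop:majo-m(x)log(x)}, \ref{m1 log carre 119} et du corollaire \ref{modeleracine}, puis abaissement des seuils par les comparaisons en $\sqrt{x}$ et une vérification directe pour le cas $1/\log^2 x$. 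Les constantes annoncées sont cohérentes ($0.129\times 1.2254<0.16$, $5.792\times 1.2254<7.1$, etc.) et l'esquisse est correcte.
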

\begin{proof}
Posons $T=5\times 10^{6}$, par l'inégalité \eqref{racineMHurst} on trouve $\int_{1}^{T} |M(t)| \diff{t} \ioe 4.26 \times 10^9 $. Appliquons le lemme  \ref{machinerie G1 check} avec $\theta=0.5$, $j=0$ et $s=0.5$ on obtient
\[
 |\check{m}(x)-1| \ioe   \sup_{T < u < x} |m_1(u)| \sqrt{u} \times \frac{1.2254}{\sqrt{x}}  +\frac{8/3}{x}+  \frac{4.26 \times 10^9}{x^2}=\frac{f(x)}{\sqrt{x}}
 \]
 par le corollaire \ref{modeleracine}  on aboutit à $f(10^9) \ioe 0.16$ et par décroissance de $f$ on prouve ainsi la première inégalité du théorème.
 
Pour tout $x\ioe 10^{16}$  par le corollaire \ref{modeleracine} et par la proposition \ref{racine autres} on a $x|m(x)| \ioe \sqrt{2x} $ et $|M(x)| \ioe \sqrt{x} $, donc  $x|m_1(x)| \ioe 3\sqrt{x} $ pour $x\ioe 10^{16}$ et ainsi on a $|m_1(x)| \sqrt{x} \ioe 5.792 $ pour  $1 \ioe x \ioe 10^{21}$. On conclut facilement de la même manière que précédemment en prenant $T=1$.

Posons $T=2.2 \times 10^{12}$,  par l'inégalité \eqref{racineMHurst} on trouve $\int_{1}^{T} |M(t)| \diff{t} \ioe 1.25 \times 10^{18} $. Par le lemme  \ref{machinerie G1 check} appliqué avec $\theta=1$ et $j=0$ on a 
\begin{equation}
|\check{m}(x)-1| \ioe \sup_{T < u < x} |m_1(u)| (7/4-\gamma) + \frac{8/3}{x}+ \frac{1.25 \times 10^{18}}{x^2}\;.
\end{equation}
On conclut par le théorème \ref{m petit 4343} pour le supremum et par décroissance de la majoration à l'estimation $|\check{m}(x)-1| \ioe 9780919^{-1}$ pour $x\soe 10^{16}$. Par le résultat \eqref{check m racine} on redescend la borne à $x=(0.16 \times 9780919)^2 \ioe 2.5 \times 10^{12}$.

Posons $T=2.15 \times 10^{11}$,  par l'inégalité \eqref{racineMHurst} on trouve $\int_{1}^{T} |M(t)| \diff{t} \ioe 3.8 \times 10^{16} $. Par le lemme  \ref{machinerie G1 check} appliqué avec  $\theta=1$, $j=1$  et $s=1-1/\log T$ on a 
\begin{equation}
|\check{m}(x)-1| \ioe \sup_{T < u < x} \log u |m_1(u)| \frac{1.17582}{\log x} + \frac{8/3}{x}+ \frac{3.8 \times 10^{16}}{x^2} \;.
\end{equation}
On conclut en utilisant le théorème \ref{prop:majo-m(x)log(x)} pour le supremum et par décroissance de la majoration obtenue à $\log(x)|\check{m}(x)-1| \ioe 8.55 \times 10^{-6}$ pour $x\soe 10^{16}$. Par le résultat \eqref{check m racine} on redescend la borne à $x= 2.5 \times 10^{11}$.

Par le lemme  \ref{machinerie G1 check} appliqué avec $T=10^{100}$ $\theta=1$, $j=2$, $s=1-2/\log T$ et par  la majoration triviale  $\int_{1}^{T} |M(t)| \diff{t} \ioe 0.5 T^2$, on a
\begin{equation}
|\check{m}(x)-1| \ioe \sup_{T < u < x} \log^2 u |m_1(u)| \frac{1.1735}{\log^2 x} + \frac{8/3}{x}+ \frac{0.5 \times 10^{200}}{x^2}
\end{equation}
et on conclut en utilisant le théorème \ref{m1 log carre 119} pour le supremum et par par décroissance à l'inégalité $(\log^2 x)|\check{m}(x)-1| \ioe 0.162$ pour $x\soe \exp(100)$. Par le résultat \eqref{check m un sur log} on redescend la borne à $x=2.5 \times 10^{11}$. Puis par le résultat \eqref{check m racine} et une conversion simple de l'inégalité \eqref{modele du pauvre} on redescend la borne à $x= 5000$, résultat rapporté à $x=3$ par vérification directe. \end{proof}
Le meilleur résultat  sur $|\check{m}(x)-1|$ était dans l'article \cite{ramare2013explicit} (voir  corrigendum  \cite{Corrig}), on a
\[
|\check{m}(x)-1| \ioe    \frac{0.0014}{\log x} \quad ( x \soe 9950) \,.
\]
Dans cet article Ramaré étudie également $\check{\check{m}}(x)= \sum_{n \ioe x} (\mu(n)/n) \log^2(x/n)$, cette fonction est utilisée par Helfgott \cite{helfgott2012minor} et Zuniga Alterman \cite{alterman2020logarithmic}. Nous avons prouvé avec l'identité \eqref{checkm par m} et dans notre thèse \cite[proposition 50]{daval2019identites}  des versions  explicites de
\begin{align}
\check{m}(x)-1=\int_{1}^{x}m(x/t)G(t)&\frac{\diff{t}}{t}+O(1/x)\\
\check{\check{m}}(x)-2\log x+2\gamma =2\int_{1}^{x}(\check{m}(x)-1)G(t)&\frac{\diff{t}}{t}+O(1/x) \,,
\end{align} 
où $G$ a la même définition que dans le théorème \ref{thm1} et dans la proposition \ref{mchliss}. Il serait donc intéressant d'obtenir des fonctions $G$ avec $\int_{1}^{\infty} |G(t)|t^{-1} \diff{t}$ très petit. En prenant la fonction de Balazard on a environ $0.17$ pour cette intégrale et on aimerait obtenir des résultats de l'ordre de la fonction $H$ de Cohen-Dress-El Marraki où l'on a environ $0.0004$. 

Nous avons montré dans notre thèse que cet objectif est théoriquement possible et en pratique on peut même utiliser une fonction $H$ pour faire une fonction $G$ : avec la fonction $H$ de l'article de Costa Peirera  \cite{CostaPPsiM} nous sommes en mesure d'assurer $0.003$ avec $K=8020$. Nous essayons encore d'améliorer ces valeurs, dans les deux cas (fonctions $H$ et $G$) l'étude de $S_K(t)=\sum_{k \ioe K}\mu(k)\{t/k\}$ est déterminante.

C'est pour ces raisons que nous ne faisons pas ici de conversions pour $\check{\check{m}}$, on aurait  les même encadrements que pour $\check{m}$ avec un facteur multiplicatif de $0.35$ alors que nous obtiendrons au moins un facteur multiplicatif de $0.01$ avec un peu de travail. Précisons également que  dans l'article \cite{ramare2013explicit} les estimations pour $\check{\check{m}}$ sont moins bonnes que pour $\check{m}$.  De plus dans un futur papier nous allons étudier les sommes  $\sum_{n \ioe x} (\mu(n)/n) \log^k(x/n)$ pour tout nombre entier $k \soe 0$.

\begin{proof}[Démonstration du théorème \ref{thm B}]
 Il s'agit des 3 dernières majorations du théorème \ref{thm check}.
\end{proof}

 Comme annoncé dans le résumé on obtient à partir de la troisième inégalité du théorème \ref{thm B} que $\sup_{x \soe 1} (\log^2 x) |\check{m}(x)-1|$ est atteint pour $x=\exp((4-2\log 2)/3)\simeq 2.39$ et vaut exactement $2(2-\log 2)^3/27 \simeq 0.1653$.
\section{Analogue d'un résultat de Odlyzko et te Riele}
Nous allons maintenant tirer parti de conversions de $m$ vers $M$. On étudiant un peu sur son ordinateur la fonction $|m(x)| \sqrt{x}$ il semble que le supremum soit atteint en $x=2^-$ et vaille $\sqrt{2}$ et qu'ensuite la fonction soit même bien plus petite. Ce qui est confirmé par nos modèles du corollaire \ref{modeleracine} en effet nous avons prouvé que  $|m(x)| \sqrt{x} \ioe 0.701$ pour tout $3 \ioe x \ioe 10^{16}$. En fait en prouvant le théorème \ref{thm D} présenté dans l'introduction nous allons voir  que ces inégalités ne peuvent subsister à l'infini.

Nous allons utiliser une identité du même type que  l'identité \eqref{bal2} de Balazard mais qui exprime $M$ en fonction de $m$ au lieu de l'inverse. Le lemme suivant est à comparer au lemme \ref{machinerie epsi} (notons que le nombre $2.1$ dans l'expression de $R_T$ ci-dessous semble pouvoir être remplacé par $1$ mais c'est sans importance dans cet article).
\begin{lem} \label{machinerie H1}
Il existe une fonction $H_1 : \RR_+ \rightarrow \RR_+$ telle que pour tous $T,x,j,\theta \in\RR$ tels que $1 \ioe T \ioe x$ et $\theta>-1$, on a :
\begin{equation*}
|m_1(x)| \ioe \sup_{T < u < x} u^{-\theta} u|m(u)| \times \int_{1}^{\infty}H_1(t)  t^{-1-\theta}\diff{t} \times x^{\theta-1}\\
 +  \frac{R_T(x)}{x} \;.
\end{equation*}
avec $R_T(x)=2+(2.1/x) \int_{1}^{T} u|m(u)|   \diff{u}$. \newline

 Pour tout $s \in\CC$ tel que $\Re s>-1$, $s\neq 0$, $s\neq 1$ on a  :
\begin{equation*}
\int_{1}^{\infty}H_1(t) t^{-s}\frac{\diff{t}}{t}= \frac{1}{s} -\frac{4(s-1)(5s+9) \zeta(s)}{3s(s+1)(s+2)(s+3)}\;. \quad \text{Et on a }  \int_{1}^{\infty}H_1(t) \frac{\diff{t}}{t}=2\zeta'(0)+\frac{41}{18} \; . \end{equation*}
\end{lem}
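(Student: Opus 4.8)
The plan is to transpose, \emph{mutatis mutandis}, the proof of Lemma~\ref{machinerie epsi}, this time starting from part~2) of Theorem~\ref{thm1}, which expresses $m_1$ as a weighted integral of $m$ (rather than of $M$). The weight $h$ is forced by the target Mellin identity: Müntz's formula will give $\int_1^\infty H_1(t)t^{-s-1}\diff t=\frac1s-\zeta(s)\int_0^1 h(y)y^{s-1}\diff y$, so one needs $\int_0^1 h(y)y^{s-1}\diff y=\frac{4(s-1)(5s+9)}{3s(s+1)(s+2)(s+3)}$, whose partial-fraction expansion $-\frac2s+\frac{16}{3(s+1)}+\frac{2}{s+2}-\frac{16}{3(s+3)}$ identifies
\[
h(y)=-2+\tfrac{16}{3}y+2y^2-\tfrac{16}{3}y^3=\tfrac23(1-y^2)(8y-3).
\]
The factor $(s-1)$ guarantees $\int_0^1 h(y)\diff y=0$, so, setting $H_1(t)=1-\sum_{n\ioe t}h(n/t)$ (with $H_1\equiv1$ on $[0,1[$), part~2) of Theorem~\ref{thm1} gives $m_1(x)=\int_1^x m(x/t)H_1(t)\,t^{-2}\diff t-\int_0^{1/x}h(y)\diff y$ for $x\soe1$. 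Since $\max_{[0,1]}|h|=|h(0)|=2$, the last term is $\ioe 2/x$ in modulus, which produces the additive constant $2$ of $R_T(x)$.

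The crux — and the step I expect to be the main obstacle — is the estimate $0\ioe H_1(t)\ioe 2.1/t$ for all $t\soe1$, the analogue (with exponent $-1$, because $H$, unlike $G$, carries no internal factor $1/t$) of Balazard's bound $0\ioe G_1(t)\ioe t^{-2}$. I would establish it by writing $\sum_{n\ioe t}h(n/t)$ in closed form through the formulas for $\sum_{n\ioe N}n^k$, $k=0,1,2,3$, $N=\lfloor t\rfloor$; Euler--Maclaurin then yields $H_1(t)=\bigl(3-10\{t\}(1-\{t\})\bigr)/(3t)+O(t^{-2})$ as $t\to\infty$, so $0<H_1(t)\ioe(1+o(1))/t$ for large $t$, and a direct numerical verification settles a bounded initial range $[1,T_0]$. (One likely has $H_1(t)\ioe1/t$ throughout — whence the remark that $2.1$ could be replaced by $1$; note $H_1(1)=1$.) Granting this, $\int_1^\infty H_1(t)t^{-1-\theta}\diff t$ converges for every $\theta>-1$, and the positivity of $H_1$ is available for the estimates below.

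The main inequality is then routine. One splits $\int_1^x=\int_1^{x/T}+\int_{x/T}^x$ (licit since $1\ioe T\ioe x$). On $[x/T,x]$, the change of variable $u=x/t$ together with $|H_1(t)|\ioe 2.1/t$ gives $\int_{x/T}^x|m(x/t)H_1(t)|\,t^{-2}\diff t\ioe (2.1/x^2)\int_1^T u|m(u)|\diff u$, the remaining part of $R_T(x)$. On $[1,x/T]$, with $\alpha=1-\theta$ and $|m(x/t)|\ioe(x/t)^{-\alpha}\sup_{T<u<x}(u^{\alpha}|m(u)|)$, using $H_1\soe0$ to enlarge the range to $[1,\infty[$,
\[
\int_1^{x/T}|m(x/t)|H_1(t)\,t^{-2}\diff t\ioe\sup_{T<u<x}\bigl(u^{-\theta}u|m(u)|\bigr)\,x^{\theta-1}\int_1^\infty H_1(t)t^{-1-\theta}\diff t,
\]
which is the announced bound. (The parameter $j$ is inert here, there being no power of $\log$ to handle.)

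For the integral identities, one applies Müntz's formula (Titchmarsh~\cite{titchmarsh1986theory}, paragraph~II.11) to $F(y)=h(y)\mathrm{1}_{[0,1]}(y)$, whose derivative is continuous, exactly as in Lemma~\ref{machinerie epsi}, the finite-support version being detailed in \cite{daval2019identites}. Since $\int_0^1 h=0$ this gives $\int_1^\infty\bigl(\sum_{n\ioe t}h(n/t)\bigr)t^{-s-1}\diff t=\zeta(s)\int_0^1 h(y)y^{s-1}\diff y$ for $\Re s>0$; subtracting from $\int_1^\infty t^{-s-1}\diff t=1/s$ yields the stated formula on $\Re s>0$, and a primitive computation confirms $\int_0^1 h(y)y^{s-1}\diff y=\frac{4(s-1)(5s+9)}{3s(s+1)(s+2)(s+3)}$. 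Both sides extend holomorphically to $\Re s>-1$: the simple poles at $s=0$ of $1/s$ and of $\zeta(s)\int_0^1 h(y)y^{s-1}\diff y$ have common residue $1$ and cancel, while $H_1(t)=O(1/t)$ keeps the left-hand integral convergent there; hence the identity holds for $\Re s>-1$, $s\ne0,1$, by analytic continuation. The value at $s=0$ follows by letting $s\to0$: from $\zeta(s)=-\tfrac12+\zeta'(0)s+O(s^2)$ and $\int_0^1 h(y)y^{s-1}\diff y=-\tfrac2s+\tfrac{41}{9}+O(s)$ one obtains $\int_1^\infty H_1(t)\,t^{-1}\diff t=\lim_{s\to0}\bigl(\tfrac1s-\zeta(s)\int_0^1 h(y)y^{s-1}\diff y\bigr)=2\zeta'(0)+\tfrac{41}{18}$.
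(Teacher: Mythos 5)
Votre démonstration est correcte et suit pour l'essentiel le même chemin que celle de l'article : identité du point 2) du théorème \ref{thm1} avec $h_1(y)=\tfrac{2}{3}(1-y^2)(8y-3)$, encadrement $0\ioe H_1(t)\ioe 2.1/t$ obtenu par Euler--Maclaurin à reste explicite complété par un traitement direct d'un intervalle initial (l'article le fait via le polynôme $9t^3-16t^2-6t+16$ sur $[1,2[$), découpage de l'intégrale en $x/T$, puis formule de Müntz, prolongement analytique et passage à la limite en $s=0$. Votre reconstruction de $h_1$ par décomposition en éléments simples de la transformée de Mellin visée, ainsi que la majoration $|\int_0^{1/x}h|\ioe \|h\|_\infty/x=2/x$, sont des variantes de présentation qui ne changent pas l'argument.
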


\begin{proof}
Nous allons étudier la somme
\begin{equation}
H_1(t)=1 -\sum_{n\leqslant t} h\Big(\frac{n}{t}\Big) \quad  \text{ avec } \quad  h_1(y)=\frac{2}{3}(1-y^2)(8y-3)  \label{def H1}
\end{equation}
et prouver que
\begin{equation}
0 \ioe H_1(t) \ioe \frac{2.1}{t} \,.  \label{prop H1}
\end{equation}  
La formule d'Euler-Maclaurin donne (voir l'appendice B p. 495 de \cite{montgomery2006multiplicative})
\begin{equation}
H_1(t)=\frac{\frac{20}{6}(\{t \}^2-\{t\})+1}{t}+ \frac{r(t)}{6t^2} \text{ , } |r(t)| \ioe  \max_{u\in[0,1]}|B_3(u)| |h_1'''|=\frac{\sqrt{3}}{36}\times 32 \ioe 1.56 \label{dvp H}
\end{equation}
où $B_3$ est le polynôme de Bernoulli unitaire de degré $3$. Le minimum de $t \mapsto \frac{20}{6}(\{t \}^2-\{t\})+1$ vaut $ \frac{20}{6} \times (-\frac{1}{4})+1=\frac{1}{6}$, ce qui assure la positivité de $t H_1(t)$ pour $t \soe 1.56$.
Par définition  de $H_1$ en \eqref{def H1} on a également pour $t\in [1,\,2[$ l'égalité
\begin{equation*}
3t^3H_1(t)=3t^3(1-h_1(1/t))=9{t}^{3}-16{t}^{2}-6t+16 
\end{equation*} 
et ce polynôme de degré 3 n'a qu'une seule racine réelle qui de plus est négative. On a ainsi montré la positivité dans  \eqref{def H1}.
L'équation \eqref{dvp H} montre également que $|H_1(t)|t \ioe \frac{20}{6} \times \frac{1}{4} +1+ \frac{1.56}{6} \ioe 2.1 $. On a  ainsi montré la majoration dans \eqref{def H1}.

Maintenant on suit la même démarche que dans la démonstration du lemme \ref{machinerie epsi}. Puisque $\int_{0}^{1} h_1(y) \diff{y}=0$, que $h'_1$ continue et $h_1(1)=h'_1(1)=0$ on peut utiliser la formule de M\"untz du paragraphe II.11 du livre de Titchmarsh \cite{titchmarsh1986theory}) appliquée avec $F(y)=h_1(y) \mathrm{1}_{[0,1]}(y)$ on obtient
\begin{equation*}
\int_{1}^{\infty}(1-H_1(t)) t^{-s}\frac{\diff{t}}{t}=\int_{0}^{\infty}(1-H_1(t)) t^{-s}\frac{\diff{t}}{t}= \zeta(s) \int_{0}^{1} h_1(y) y^{s-1} \diff{y}  \quad   (0< \Re(s) <1) 
\end{equation*}
et pour $\Re s>0$  on a $\int_{1}^{\infty}t^{-s-1} \diff{t}=1/s $, donc 
\begin{equation}
\int_{1}^{\infty}H_1(t)t^{-s}\frac{\diff{t}}{t}= \frac{1}{s} \bigg[1-\zeta(s)\frac{4(s-1)(5s+9)}{3(s+1)(s+2)(s+3)}\bigg] \quad  (0< \Re(s) <1) \;. 
\label{Mellin H deux}
\end{equation}
D'après les inégalités \eqref{prop H1}, le membre de gauche de \eqref{Mellin H deux} est holomorphe pour $\Re s>-1$ donc par prolongement analytique
l'identité \eqref{Mellin H deux} reste vraie pour les mêmes valeurs de $s$ en supposant $s\neq 0$ et $s\neq 1$. En passant à la limite pour $s \to 0$ dans \eqref{Mellin H deux}, on reconnaît un
nombre dérivé. En effet posons $f(s)=\zeta(s)\frac{4(s-1)(5s+9)}{3(s+1)(s+2)(s+3)}$ on a $f(0)=1$ et 
\[ 
\lim_{s \to 0} \frac{f(0+s)-f(0)}{s}=f'(0)=\zeta'(0) \times (-2) +\zeta(0)\times\frac{41}{9}=-\left( 2\zeta'(0)+\frac{41}{18}\right)\;.
\]
Ainsi la limite pour $s \to 0$ de \eqref{Mellin H deux} donne :
\[
\int_{1}^{\infty}H_1(t) \frac{\diff{t}}{t}=2\zeta'(0)+\frac{41}{18}\;.
\]
Le théorème \ref{thm1} appliqué avec $h_1(y)=\frac{2}{3}(1-y^2)(8y-3)$ donne
\begin{equation}
m_1(x)=m(x)-\frac{M(x)}{x} =\int_{1}^{x}m(x/t) H_1(t) \frac{\diff{t}}{t^2} -\int_0^{1/x} h_1(y) \diff{y} \;. \label{identite H1} 
\end{equation}
Puisque tout est explicable, il est facile de montrer que $x \mapsto -x\int_0^{1/x} h_1(y) \diff{y}$ est une fraction rationnelle qui est positive, croissante et tend vers $2$ quand $x \to \infty$. Ainsi $|\int_0^{1/x} h_1(y) \diff{y}| \ioe 2/x$ pour tout $x\soe 1$.

Le reste de la démonstration de la majoration avec le découpage de l'intégrale dans \eqref{identite H1}  sur les intervalles $[1,\,x/T]$ et $[x/T,\,x]$   est identique à  la démonstration du lemme \ref{machinerie epsi} et ne présente aucune nouvelle difficulté. \end{proof}
Pour convertir des résultats sur des limites supérieurs nous avons besoin que le supremum porte sur un intervalle qui part à l'infini avec $x$, la majoration \eqref{conversion racine} n'aurait pas suffit.
\begin{prop}\label{prop:conversion-m-vers-M-racince}
On a  : 
\begin{equation}
|m_1(x)|\sqrt{x}\ioe b \sup_{t \ioe x}|m(t)|\sqrt{t}+\frac{2}{\sqrt{x}} \quad \text{ 
avec } \quad  b=2+\frac{368}{315}\zeta(1/2) = 0.2939...  \label{conversion racine2}
\end{equation}
En particulier, 
\begin{equation}
\frac{|M(x)|}{\sqrt{x}} \ioe (1+b)\sup_{t \ioe x}|m(t)|\sqrt{t}+\frac{2}{\sqrt{x}}\;.  \label{conversion racine}
\end{equation}
Pour tout $\epsi$ vérifiant $0 <\epsi < 3/4$, on a :
\begin{equation}
\frac{|M(x)|}{\sqrt{x}} \ioe (1+b)\sup_{x^{\frac{3}{4}-\epsi} \ioe t \ioe x}|m(t)|\sqrt{t}+\frac{2}{\sqrt{x}}+\frac{1.05}{x^{2\epsi}}\;.  \label{conversion racine localise}
\end{equation}
\end{prop}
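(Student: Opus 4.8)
Le plan est d'appliquer le lemme \ref{machinerie H1} avec $\theta = 1/2$, qui est conçu pour convertir une majoration du type $|m(u)|\sqrt{u} \ll 1$ en une majoration de $|m_1(x)|\sqrt{x}$, puis de passer à $|M(x)|/\sqrt{x}$ au moyen de l'identité $M(x)/x = m(x) - m_1(x)$. On commence avec $T = 1$, de sorte que $R_T(x) = 2$ ; comme $x^{\theta-1}\sqrt{x} = 1$ et $u^{-\theta}u = \sqrt{u}$, le lemme donne
\[
\sqrt{x}\,|m_1(x)| \ioe \Big(\int_1^\infty H_1(t)\, t^{-3/2}\,\diff{t}\Big)\, \sup_{1 < u < x} \sqrt{u}\,|m(u)| + \frac{2}{\sqrt{x}}\,,
\]
et le supremum sur $]1,x[$ est majoré par celui sur $\{t \ioe x\}$. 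Il reste à identifier la constante : en reportant $s = 1/2$ dans la formule de Mellin du lemme $\int_1^\infty H_1(t)\,t^{-s}\,\frac{\diff{t}}{t} = \frac1s - \frac{4(s-1)(5s+9)\zeta(s)}{3s(s+1)(s+2)(s+3)}$, le dénominateur $3\cdot\frac12\cdot\frac32\cdot\frac52\cdot\frac72 = \frac{315}{16}$ et le numérateur $4\cdot(-\frac12)\cdot\frac{23}{2} = -23$ réduisent le second terme à $-\frac{368}{315}\zeta(1/2)$, d'où $\int_1^\infty H_1(t)\,t^{-3/2}\,\diff{t} = 2 + \frac{368}{315}\zeta(1/2) = b > 0$, la positivité venant de $H_1 \soe 0$ (voir \eqref{prop H1}). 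Ceci donne \eqref{conversion racine2}.

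Pour en déduire \eqref{conversion racine}, on écrit $M(x)/x = m(x) - m_1(x)$, d'où $|M(x)|/\sqrt{x} \ioe \sqrt{x}\,|m(x)| + \sqrt{x}\,|m_1(x)|$ ; on majore le premier terme par $\sup_{t \ioe x}\sqrt{t}\,|m(t)|$ (puisque $x$ figure parmi les indices) et le second par \eqref{conversion racine2}, ce qui fait apparaître le facteur $1+b$.

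Pour la version localisée \eqref{conversion racine localise}, on reprend la première étape avec $T = x^{3/4-\epsi}$ au lieu de $T = 1$. Ce choix est licite pour $x \soe 1$ et $0 < \epsi < 3/4$, car alors $1 \ioe T \ioe x$, et il restreint automatiquement le supremum à la fenêtre $x^{3/4-\epsi} \ioe u \ioe x$. Le seul ingrédient nouveau est le contrôle de $R_T(x) = 2 + (2.1/x)\int_1^T u\,|m(u)|\,\diff{u}$ : l'inégalité de Meissel $|m(u)| \ioe 1$ fournit $\int_1^T u\,|m(u)|\,\diff{u} \ioe T^2/2$, d'où $\sqrt{x}\,R_T(x)/x \ioe 2/\sqrt{x} + 1.05\,T^2/x^{3/2} = 2/\sqrt{x} + 1.05/x^{2\epsi}$ après la substitution $T^2 = x^{3/2-2\epsi}$. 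On conclut en combinant avec $M(x)/x = m(x) - m_1(x)$ et l'inégalité triangulaire comme précédemment.

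Il n'y a pas d'obstacle véritable ici : tout le travail est contenu dans le lemme \ref{machinerie H1}. Les seuls points à surveiller sont la spécialisation $s = 1/2$ de la formule de Mellin (afin que la constante soit exactement $b = 2 + \frac{368}{315}\zeta(1/2)$) et le contrôle précis des puissances de $x$ dans le terme de reste lorsque $T = x^{3/4-\epsi}$, qui impose la forme précise $1.05/x^{2\epsi}$ et la restriction $\epsi < 3/4$.
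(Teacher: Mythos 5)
Votre démonstration est correcte et suit essentiellement la même voie que celle de l'article : application du lemme \ref{machinerie H1} avec $T=1$, $\theta=1/2$, $s=1/2$ pour \eqref{conversion racine2}, puis passage à $M$ par $M(x)/x=m(x)-m_1(x)$, et enfin le choix $T=x^{3/4-\epsi}$ avec l'inégalité de Meissel pour la version localisée. Vous explicitez en outre le calcul de la constante $b$ et le contrôle du reste $1.05\,T^2/x^{3/2}$, ce qui est conforme (et même un peu plus soigné que la ligne intermédiaire de l'article).
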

\begin{rem}
L'insertion d'une majoration du type $|m(t)| \ioe A/ \sqrt{t}$ avec $A>0$
dans l'identité $
xm_1(x)=\int_{1}^xm(t)\diff{t}
$ fournit   
\[
|m_1(x)| \ioe \frac{2A}{\sqrt{x}}\;,
\] 
tandis que l'estimation \eqref{conversion racine2} fait gagner plus d'un facteur $6$ avec 
\[
|m_1(x)| \ioe \frac{0.3A}{\sqrt{x}}+\frac{2}{x}\;.
\]
\end{rem}

\begin{proof}
Le lemme \ref{machinerie H1} appliqué avec $T=1$, $\theta=1/2$ et $s=1/2$  donne directement le résultat \eqref{conversion racine2}.

Soit $a\in[0,1]$ un paramètre que l'on fixera plus bas. Le même lemme \ref{machinerie H1} appliqué avec cette fois $T=x^a$ et l'inégalité de Meissel $|m(u)| \ioe 1$ pour tout $u \soe 1$ donne, après multiplication par $\sqrt{x}$, l'inégalité
\begin{equation}
\sqrt{x } \frac{R_T(x)}{x}=\frac{2}{\sqrt{x}}+ \frac{2.1}{x^{1.5}} \int_{1}^{T} u|m(u)|   \diff{u} \ioe \frac{2}{\sqrt{x}}+ \frac{2.1 \times  T^2}{x^{1.5}}=\frac{2}{\sqrt{x}}+\frac{1.05}{x^{1.5-2a}}
\end{equation}
Ce qui permet de conclure en prenant $a=3/4-\epsi$.
\end{proof}
Dans \cite{odlyzko1984disproof} Odlyzko et te Riele ont prouvé que $\varlimsup |M(x)|/\sqrt{x}>1$ réfutant ainsi l'hypothèse de Mertens. Leur travail 
a été poursuivi par Hurst qui a donné une  valeur significativement plus grande que $1$ qui permet, combinée à la proposition
\ref{prop:conversion-m-vers-M-racince}, d'établir que l'on a également 
$\varlimsup  |m(x)|\sqrt{x}>1$ et même plus. 
\begin{proof}[Démonstration du théorème \ref{thm D}]
D'après  \cite{hurst2018computations} on a $\varlimsup  |M(x)|/\sqrt{x}>1.837625 $, on conclut avec l'inégalité \eqref{conversion racine localise} puisque  $1.837625/(1+b)>1.42018$.
\end{proof}

En utilisant l'identité \eqref{bal2} de Balazard  nous pouvons montrer que (voir \eqref{maj bal} plus bas) 
\begin{equation*} 
x|m_1(x)| \ioe \frac{1}{3}  \sup_{t \ioe x}|M(t)| + \frac{8}{3} \quad (x \soe 1)
\end{equation*} 
avec rappelons-le $xm_1(x) = xm(x)-M(x)$.
Cela permet d'obtenir des estimations de $|m|$ à partir d'estimations de $|M|$. Nous allons montrer que réciproquement, on peut obtenir des estimations de $|M|$ à partir d'estimations de $|m|$. 
\begin{cor} \label{autre sens}
On a l'inégalité :
\begin{equation*} 
x| m_1(x)| \ioe 0.44  \sup_{t \ioe x}t|m(t)| + 2 \quad (x \soe 1)\,.
\end{equation*}   
En particulier, on a 
\[
|M(x)| \ioe
1.44\sup_{t \ioe x}t|m(t)| + 2 \quad (x \soe 1)\,.
\] 
\end{cor}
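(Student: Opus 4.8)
The plan is to apply Lemma \ref{machinerie H1} with the particular parameters $\theta=0$ and $T=1$: this choice makes the remainder term essentially trivial and turns the weight $u^{-\theta}u|m(u)|$ into exactly $u|m(u)|$, which is the quantity appearing in the statement.

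Concretely, with $\theta=0$ the lemma gives $u^{-\theta}u=u$, the factor $x^{\theta-1}=x^{-1}$, and the Mellin factor becomes $\int_{1}^{\infty}H_1(t)\,t^{-1}\diff{t}$, whose value is recorded in the lemma as $2\zeta'(0)+\tfrac{41}{18}$. Taking $T=1$ makes $\int_{1}^{T}u|m(u)|\diff{u}=0$, so $R_1(x)=2$. Multiplying the resulting inequality by $x$ and using $\sup_{1<u<x}u|m(u)|\ioe\sup_{t\ioe x}t|m(t)|$, I would obtain
\[
x|m_1(x)| \ioe \Big(2\zeta'(0)+\tfrac{41}{18}\Big)\sup_{t\ioe x}t|m(t)|+2 \quad (x\soe 1).
\]
It then remains only to verify the numerical inequality $2\zeta'(0)+\tfrac{41}{18}\ioe 0.44$: since $\zeta'(0)=-\tfrac12\log(2\pi)$, one has $2\zeta'(0)+\tfrac{41}{18}=\tfrac{41}{18}-\log(2\pi)=0.4399\ldots<0.44$, which closes the first inequality.

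For the ``in particular'' assertion I would use the definition $m_1(x)=m(x)-M(x)/x$ in the form $M(x)=xm(x)-xm_1(x)$, so that $|M(x)|\ioe x|m(x)|+x|m_1(x)|$; bounding $x|m(x)|\ioe\sup_{t\ioe x}t|m(t)|$ and inserting the bound just proved for $x|m_1(x)|$ yields $|M(x)|\ioe(1+0.44)\sup_{t\ioe x}t|m(t)|+2$, as claimed. I do not expect any real obstacle: the only points to check are that $\theta=0$ is admissible under the hypothesis $\theta>-1$ of Lemma \ref{machinerie H1}, that $T=1$ satisfies $1\ioe T\ioe x$ for every $x\soe 1$, and that the boundary case $x=1$ is harmless (there $m_1(1)=0$ while the right-hand side is $\soe 2$).
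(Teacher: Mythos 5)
Votre démonstration est correcte et suit essentiellement la même voie que l'article : application du lemme \ref{machinerie H1} avec $T=1$ et $\theta=0$, la constante $2\zeta'(0)+\tfrac{41}{18}=0.4399\ldots<0.44$, puis $|M(x)|\ioe x|m(x)|+x|m_1(x)|$ pour la seconde inégalité. (Notez que la preuve de l'article renvoie par erreur au lemme \ref{machinerie G1 check}, mais la valeur $2\zeta'(0)+\tfrac{41}{18}$ qu'elle utilise provient bien du lemme \ref{machinerie H1}, que vous avez correctement identifié.)
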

\begin{proof}
Le lemme \ref{machinerie G1 check} appliqué avec $T=1$, $\theta=0$ et $2\zeta'(0)+\frac{41}{18}=0.4399...$  donne  les deux résultats puisque $|M(x)| \ioe x|m(x)| + x|m_1(x)|$. \end{proof}
\begin{rem}
La formule sommatoire d'Abel 
\[
M(x) =\sum_{n\ioe x}\frac{\mu(n)}{n} n =  m(x)x- \int_{1}^x m(t) \diff{t}
\]
suivie de l'inégalité triangulaire n'aboutirait qu'à la majoration 
\[
x|m_1(x)| \ioe \log x \sup_{t \ioe x}t|m(t)|\;. 
\] 
\end{rem}

\begin{proof}[Démonstration  du théorème \ref{thm C}]
En prenant $j=0$, $\theta=0$,  $T \to 1$ et $s=0$ dans le lemme \ref{machinerie epsi} (ou plus directement l'identité de Balazard \eqref{bal2} et le fait que $\int_{1}^{\infty} \epsi'_1(t) \diff{t}=\epsi_1(1)=1/3$) on obtient
\begin{equation*} 
u|m_1(u)| \ioe \frac{1}{3}  \sup_{t \ioe u}|M(t)| +\frac{8}{3} \quad (u \soe 1) \;. 
\end{equation*}
Pour tout $u$ on a $u|m(u)| \ioe u|m_1(u)| + |M(u)|$ et $|M(u)| \ioe \sup_{t \ioe u}|M(t)| $, ainsi
\begin{equation} 
u|m(u)| \ioe \frac{4}{3}  \sup_{t \ioe u}|M(t)| + \frac{8}{3} \quad (u \soe 1) \;.  \label{maj bal}
\end{equation}
Puisque le membre de droite de  la majoration \eqref{maj bal}  est croissant, en passant au supremum pour $u \in [1,\, x]$ on arrive à
\begin{equation*} 
\sup_{u \ioe x} u|m(u)| \ioe \frac{4}{3}  \sup_{t \ioe x}|M(t)| + \frac{8}{3} \quad (x \soe 1) \;. 
\end{equation*}
Mais puisque $M(1637)=-16$ on en déduit que pour tout $x\soe 1637$ on a la majoration  $8/3 \ioe (1/6)  \sup_{t \ioe x}|M(t)|$ et donc 
\begin{equation*} 
\sup_{u \ioe x} u|m(u)| \ioe \frac{3}{2}  \sup_{t \ioe x}|M(t)|  \quad (x \soe 1637) \;. 
\end{equation*}

Par le corollaire \ref{autre sens}  on obtient de la même manière
\begin{equation*} 
\sup_{u \ioe x} |M(u)| \ioe 1.44 \sup_{t \ioe x}|m(t)|t + 2 \quad (x \soe 1) \;. 
\end{equation*} 
Mais puisque $m(8510)8510 > 36$ on en déduit que pour tout $x\soe 8510$ on a la majoration  $2 \ioe (1.5-1.44)\sup_{t \ioe x}|m(t)|t $ et donc
\begin{equation*} 
\sup_{t \ioe x} |M(t)| \ioe \frac{3}{2}  \sup_{t \ioe x}|m(t)|t  \quad (x \soe 8510) \;. 
\end{equation*}

Une vérification directe avec le logiciel PARI/GP donne l'encadrement de l'énoncé entre $94$ et $8510$.
\end{proof}

\bibliographystyle{alpha}

\bibliography{bibarticle}

\end{document}